\documentclass[11pt,english]{article}
\usepackage{courier}
\usepackage{pdfsync}
\usepackage[T1]{fontenc}
\usepackage[utf8]{inputenc}
\usepackage{csquotes}
\usepackage{geometry}
\geometry{verbose,tmargin=1in,bmargin=1in,lmargin=1in,rmargin=1in}
\usepackage{color}
\usepackage{babel}
\usepackage{verbatim}
\usepackage{refstyle}
\usepackage{lmodern}
\usepackage{textcomp}
\usepackage{mathrsfs}
\usepackage{mathtools}
\usepackage{amsmath}
\usepackage{amsthm}
\usepackage{amssymb}
\usepackage{esint}
\usepackage{microtype}

\usepackage[colorlinks=true,pdfpagemode=UseNone,urlcolor=blue,linkcolor=blue,citecolor=blue]{hyperref}
\makeatletter
\pdfstringdefDisableCommands{
  \let\thanks\@gobble
}

\AtBeginDocument{\providecommand\eqref[1]{\ref{eq:#1}}}
\AtBeginDocument{\providecommand\figref[1]{\ref{fig:#1}}}
\AtBeginDocument{\providecommand\propref[1]{\ref{prop:#1}}}
\AtBeginDocument{\providecommand\secref[1]{\ref{sec:#1}}}
\AtBeginDocument{\providecommand\thmref[1]{\ref{thm:#1}}}
\AtBeginDocument{\providecommand\lemref[1]{\ref{lem:#1}}}
\AtBeginDocument{\providecommand\appendixref[1]{\ref{appendix:#1}}}
\AtBeginDocument{\providecommand\defref[1]{\ref{def:#1}}}
\RS@ifundefined{subsecref}
  {\newref{subsec}{name = \RSsectxt}}
  {}
\RS@ifundefined{thmref}
  {\def\RSthmtxt{theorem~}\newref{thm}{name = \RSthmtxt}}
  {}
\RS@ifundefined{lemref}
  {\def\RSlemtxt{lemma~}\newref{lem}{name = \RSlemtxt}}
  {}

\numberwithin{equation}{section}
\numberwithin{figure}{section}
\numberwithin{table}{section}
\theoremstyle{plain}
\newtheorem{thm}{\protect\theoremname}[section]
\theoremstyle{plain}
\newtheorem{lem}[thm]{\protect\lemmaname}
\theoremstyle{plain}
\newtheorem{prop}[thm]{\protect\propositionname}
\theoremstyle{definition}
\newtheorem{defn}[thm]{\protect\definitionname}

\urlstyle{same}
\usepackage{refstyle}
\newref{rem}{name=Remark~,names=Remarks~}
\newref{lem}{name=Lemma~,names=Lemmas~}
\newref{def}{name=Definition~,names=Definitions~}
\newref{thm}{name=Theorem~,names=Theorems~}
\newref{prop}{name=Proposition~,names=Propositions~}
\newref{cond}{name=Condition~,names=Conditions~}
\newref{cor}{name=Corollary~,names=Corollaries~}
\newref{sec}{name=Section~,names=Sections~}
\newref{fig}{name=Figure~,names=Figures~}
\newref{subsec}{name=Subsection~,names=Subsections~}
\newref{appendix}{name=Appendix~,names=Appendices~}
\newref{eq}{name=,refcmd=(\ref{#1})}
\usepackage{tikz}
\usepackage{pgfplots}
\pgfplotsset{compat=1.12}
\hypersetup{final}

\@ifundefined{showcaptionsetup}{}{\PassOptionsToPackage{caption=false}{subfig}}
\usepackage{subfig}
\makeatother

\providecommand{\definitionname}{Definition}
\providecommand{\lemmaname}{Lemma}
\providecommand{\propositionname}{Proposition}
\providecommand{\theoremname}{Theorem}

\newcommand{\be}{\begin{equation}}
\newcommand{\ee}{\end{equation}}
\newcommand{\B}{{\mathrm B}}
\newcommand{\T}{{\mathrm T}}

\newcommand{\cX}{{\mathcal X}}
\newcommand{\bal}{\begin{aligned}}
\newcommand{\enbal}{\end{aligned}}

\begin{document}
\global\long\def\supp{\operatorname{supp}}

\global\long\def\Uniform{\operatorname{Uniform}}

\global\long\def\dif{\mathrm{d}}

\global\long\def\e{\operatorname{e}}

\global\long\def\ii{\operatorname{i}}

\global\long\def\Cov{\operatorname{Cov}}

\global\long\def\Var{\operatorname{Var}}

\global\long\def\pv{\operatorname{p.v.}}

\global\long\def\e{\mathrm{e}}

\global\long\def\p{\mathrm{p}}

\global\long\def\R{\mathbf{R}}

\global\long\def\Law{\operatorname{Law}}

\global\long\def\supp{\operatorname{supp}}

\global\long\def\image{\operatorname{image}}

\global\long\def\dif{\mathrm{d}}

\global\long\def\eps{\varepsilon}

\global\long\def\sgn{\operatorname{sgn}}

\global\long\def\tr{\operatorname{tr}}

\global\long\def\Hess{\operatorname{Hess}}

\global\long\def\Re{\operatorname{Re}}

\global\long\def\Im{\operatorname{Im}}

\global\long\def\Dif{\operatorname{D}}

\global\long\def\divg{\operatorname{div}}

\newcommand{\email}[1]{{\href{mailto:#1}{\nolinkurl{#1}}}}
\title{Viscous shock solutions to the stochastic Burgers equation}
\author{Alexander Dunlap\thanks{Department of Mathematics, Courant Institute of Mathematical Sciences, New York University, New York, NY 10012 USA, \email{alexander.dunlap@cims.nyu.edu}} \and Lenya
Ryzhik\thanks{Department of Mathematics, Stanford University, Stanford, CA 94305 USA,  \email{ryzhik@stanford.edu}}}
\maketitle
\begin{abstract}
We define a notion of a viscous shock solution of the stochastic Burgers
equation that connects ``top'' and ``bottom'' spatially stationary
solutions of the same equation. Such shocks  generally
travel in space, but we show that they admit time-invariant
measures when viewed in their own reference frames. Under such a measure,
the viscous shock is a deterministic function of the bottom and top
solutions and the shock location. However, the measure of the bottom
and top solutions must be tilted  to account for the change of reference frame. We also show
a convergence result to these stationary shock solutions from solutions
initially connecting two constants, as time goes to infinity.
\end{abstract}

\section{Introduction}

We consider the one-dimensional stochastic Burgers equation, forced by the gradient of a Gaussian noise that is smooth in space and white in time:
\begin{equation}
\dif u(t,x)=\frac{1}{2}[\partial_{x}^{2}u(t,x)-\partial_{x}(u^{2})(t,x)]\dif t+\dif(\partial_{x}V)(t,x),\qquad t,x\in\mathbb{R}.\label{eq:SBE}
\end{equation}
Here, $V=\rho*W$, where $W$ is a cylindrical Wiener process on $L^{2}(\mathbb{R})$
whose covariance kernel is the identity, so the Itô time differential
$\dif W$ is a white noise on $\mathbb{R}\times\mathbb{R}$, and $\rho\in\mathcal{C}^{\infty}(\mathbb{R})\cap H^{1}(\mathbb{R})$.
We use~$*$ to denote spatial convolution. A detailed construction of the solutions
to \eqref{SBE} in a weighted space $\cX$ of continuous functions 
that grow at most as $|x|^{1/2+}$ at infinity an be found in~\cite{DGR19}.
We recall the precise result and the definition of this space  in \secref{stufffromthelastpaper}.

Spacetime-stationary solutions to the stochastic Burgers equation on the whole real line have been studied extensively
in the recent years. With apologies for the clumsiness, we will refer
to the single-time laws of such spacetime-stationary solutions as ``space-translation-invariant
invariant measures.'' Kick-type random forcing in \eqref{SBE} was considered in~\cite{Bak16,BCK14,BL19},
and the white in time setting, as in the present paper, was treated in \cite{DGR19}.  We also refer to these 
papers for references to the extensive literature on the torus case $x\in\mathbb{R}/\mathbb{Z}$.

For the unforced Burgers equation ($V\equiv0$ in \eqref{SBE}), spacetime-stationary
solutions are simply constants. In addition,
the unforced problem also admits   traveling wave solutions, known as viscous shocks,
that are perhaps of a more direct interest in applications than constant solutions.
They have the explicit form
\begin{equation}
u(t,x)=-a\tanh(a(x-bt-c))+b=\frac{b-a}{1+\e^{-2a(x-bt-c)}}+\frac{b+a}{1+\e^{2a(x-bt-c)}}\label{eq:deterministic-shocks}
\end{equation}
for constants $a,b,c\in\mathbb{R}$, as can be checked directly. Such
solutions ``connect'' the two constant solutions $b\pm|a|$, by
which we mean that
\[
\lim_{x\to\mp\infty}u(t,x)=b\pm|a|,
\]
as depicted in \figref{deterministicshock}.

\begin{figure}
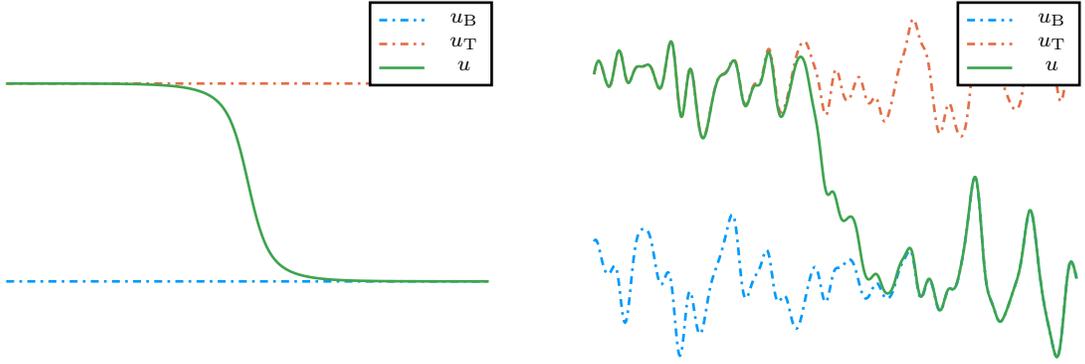

\centering{}\hfill{}\subfloat[\label{fig:deterministicshock}Viscous shock solution $u$ between
lower and upper solutions $u_{\mathrm{B}},u_{\mathrm{T}}$ of the
deterministic Burgers equation.]{

 
}\hspace*{\fill}\caption{Viscous shock solutions to the deterministic and stochastic Burgers equations.}
\end{figure}

\subsubsection*{Existence and classification of the random shock measures}

In this paper, we discuss analogues of these viscous shocks in the
stochastically forced case. We first must have analogues of the constant solutions that are connected by the shocks. We recall that \cite{DGR19} considered spacetime-stationary \emph{families}
of solutions to \eqref{SBE}, by which we mean jointly spacetime-stationary
solutions to the system of equations
\begin{equation}
\dif u_{i}(t,x)=\frac{1}{2}[\partial_{x}^{2}u_{i}(t,x)-\partial_{x}(u_{i}^{2})(t,x)]\dif t+\dif(\partial_{x}V)(t,x),\qquad t,x\in\mathbb{R},\label{eq:SBE-many}
\end{equation}
which are coupled through the noise $V$. 
As shown in~\cite[Theorem~1.1]{DGR19}, such coupled spacetime-stationary
solutions are almost-surely ordered according to their mean. One can
construct them as long-time limits of solutions starting with constant
initial conditions, and the limits preserve the order of the constants.
It was also shown in \cite[Theorem~1.1]{DGR19} that for any~$a_{k}\in\mathbb{R}$,
$k=1,\ldots,N$, there exists a unique extremal space-translation-invariant
invariant measure $\nu_{a_{1},\ldots,a_{N}}$ for \eqref{SBE-many}
such that if $(v_{1},\ldots,v_{N})\sim\nu_{a_{1},\ldots,a_{N}}$,
then $\mathbb{E}v_{k}(x)=a_{k}$ and $\mathbb{E}v_{k}(x)^{2}<\infty$
for each $x\in\mathbb{R}$ and $k=1,\ldots,N$. The extremal
invariant measures serve as attractors for the solutions to the Cauchy problem 
for a large class of ``not far from periodic'' initial conditions.

A stochastic shock, rather than connecting two constants as in the deterministic case,
connects two ordered components~$u_{\mathrm{B}}$ and
$u_{\mathrm{T}}$ (``bottom'' and ``top'') of a (space-stationary, say) solution to~\eqref{SBE-many} with $N=2$,
as illustrated in \figref{randomshock}.  We define the set of bottom and top solutions
\begin{equation}
\mathcal{X}_{\mathrm{BT}}=\left\{ (u_{\mathrm{B}},u_{\mathrm{T}})\in\mathcal{X}^{2}\ :\ u_{\mathrm{B}}<u_{\mathrm{T}}\text{ and }
\lim_{x\to\pm\infty}\int_{0}^{x}[u_{\mathrm{T}}-u_{\mathrm{B}}](y)\,\dif y=\pm\infty\right\}.\label{eq:XBTdefbis}
\end{equation}
The space of viscous shocks  is then
\begin{equation}
\mathcal{X}_{\mathrm{Sh}}=\left\{ (u_{\mathrm{B}},u_{\mathrm{T}},u)\in\mathcal{X}_{\mathrm{BT}}\times\mathcal{X}\ :\ 
\int_{-\infty}^{0}|u_{\mathrm{T}}-u|+\int_{0}^{\infty}|u-u_{\mathrm{B}}|<\infty\right\} .\label{eq:XShdefbis2}
\end{equation}
If $(u_\B,u_\T,u)\in\mathcal{X}_\mathrm{Sh}$, then we say that $u$ is a shock connecting $u_\T$ on the left to $u_\B$ on the right. We note that \eqref{XBTdefbis}--\eqref{XShdefbis2} give an ``$L^1$'' notion of a shock, which is convenient because of the nice $L^1$ properties of the stochastic Burgers dynamics (described in \cite[Section~3]{DGR19}).

Given a pair $(u_\B,u_\T)\in\cX_\mathrm{BT}$ of bottom and top solutions to \eqref{SBE}, one can construct 
a semi-explicit  shock solution to this equation
in terms of $u_{\mathrm{B}}$ and $u_{\mathrm{T}}$, generalizing
\eqref{deterministic-shocks}, so that the triple~$(u_\B,u_\T,u)$ lies in $\cX_{\mathrm {Sh}}$, as follows. 
If $v_{\mathrm{B}}(x)<v_{\mathrm{T}}(x)$ for all~$x\in\mathbb{R}$, and $b,\gamma\in\mathbb{R}$, define
\begin{equation}
\mathscr{S}_{b,\gamma}[v_{\mathrm{B}},v_{\mathrm{T}}](x)=\frac{v_{\mathrm{B}}(x)}{1+\exp\{\gamma-\int_{b}^{x}[v_{\mathrm{T}}-v_{\mathrm{B}}](y)\,\dif y\}}+\frac{v_{\mathrm{T}}(x)}{1+\exp\{-\gamma+\int_{b}^{x}[v_{\mathrm{T}}-v_{\mathrm{B}}](y)\,\dif y\}}.\label{eq:Sdef}
\end{equation}
Let $(u_{\mathrm{B}},u_{\mathrm{T}})$ be a solution to \eqref{SBE-many}
with $N=2$ such that $u_{\mathrm{B}}(t,x)<u_{\mathrm{T}}(t,x)$ for
all $t$ and $x$, and $b_t$ be the solution to the
non-autonomous ordinary differential equation
\begin{equation}
\partial_{t}b_{t}=\frac{1}{2}(-\partial_{x}(\log(u_{\mathrm{T}}-u_{\mathrm{B}}))+u_{\mathrm{B}}+u_{\mathrm{T}})(t,b_{t}).\label{eq:bteqn}
\end{equation}
If we set
\begin{equation}
u(t,x)=\mathscr{S}_{b_{t},\gamma}[(u_{\mathrm{B}},u_{\mathrm{T}})(t,\cdot)]\label{eq:shockprototype}
\end{equation}
for some fixed $\gamma\in\mathbb{R}$, then it turns out that $(u_{\mathrm{B}},u_{\mathrm{T}},u)$ solves \eqref{SBE-many}
with $N=3$. This is a general fact true for any pair of ordered solutions $u_\B$ and $u_\T$ of \eqref{SBE-many}.
We will refer to $b_{t}$ as the ``shock position.'' A more useful interpretation 
of $b_t$, in terms of the KPZ equation, is presented in \lemref{ZtsolvestheODE} in \secref{chgvar}.
We postpone it until then as it requires some additional notions.

If $(u_\B,u_\T)(t,\cdot)\in\cX_\mathrm{BT}$ (for which it suffices that this holds at $t=0$, as shown in \lemref{dynamicspreservelimits} below), 
and $u(t,x)$ is given by \eqref{shockprototype}, then for~$x-b_{t}\gg1$
we have $u\approx u_{\mathrm{B}}$, while for~$x-b_{t}\ll-1$ we have
$u\approx u_{\mathrm{T}}$. This is a direct way to see that \eqref{shockprototype} defines
a connection between~$u_\T$ on the left and $u_\B$ on the right.
The width of the transition region around~$b_{t}$ depends on the size of $u_{\mathrm{T}}-u_{\mathrm{B}}$ 
near~$b_{t}$: the closer $u_{\mathrm{T}}$ and~$u_{\mathrm{B}}$ get near~$b_{t}$, the wider the shock region. 
We will see this reflected in the tilt of the invariant measure in \thmref{stationary-shocks} below.

The system \eqref{bteqn}--\eqref{shockprototype}
involves the random noise $V$ only through $u_{\mathrm{B}}$ and
$u_{\mathrm{T}}$: conditional on the top and bottom solutions, the
shock  
position and profile are completely determined by~\eqref{bteqn} and~\eqref{shockprototype},
respectively.
The expression \eqref{shockprototype} is a direct generalization
of \eqref{deterministic-shocks}. Indeed, if $u_{\mathrm{B}}\equiv b-a$
and $u_{\mathrm{T}}=b+a$, with some $b\in\mathbb{R}$ and $a>0$,
then for any $b_{0}\in\mathbb{R}$, $b_{t}=bt+b_{0}$ solves \eqref{bteqn}.
Then~\eqref{shockprototype} reduces to~\eqref{deterministic-shocks},
with $c=b_{0}-\gamma/2$.

Motivated by \eqref{shockprototype} and continuing to assume the
ordering of $u_{\mathrm{B}}$ and $u_{\mathrm{T}}$, we can make a
change of variables
\begin{equation}
\zeta=\frac{1}{2}\int_{b_{t}}^{x}[u_{\mathrm{T}}-u_{\mathrm{B}}](t,x)\,\dif x,\qquad U=\frac{2u-u_{\mathrm{B}}-u_{\mathrm{T}}}{u_{\mathrm{T}}-u_{\mathrm{B}}}.\label{eq:chgvar-intro}
\end{equation}
Under this change of variables, \eqref{shockprototype} becomes the
deterministic and time-independent profile
\begin{equation}
U(t,\zeta)=-\tanh\zeta,\label{eq:Utzeta}
\end{equation}
which is the same as \eqref{deterministic-shocks} in the deterministic
case. As we will see, under the same change of variables, the stochastic
Burgers equation \eqref{SBE} takes the strikingly simple form
\begin{equation}
\partial_{t}U(t,\zeta)=\frac{1}{8}\partial_{\zeta}\left((u_{\mathrm{T}}-u_{\mathrm{B}})^{2}\cdot(\partial_{\zeta}U-U^{2}+1)\right)(t,\zeta),\label{eq:dtU-intro}
\end{equation}
to which \eqref{Utzeta} is a solution.

The above computations did not use any statistical properties of $u_{\mathrm{B}}$
and $u_{\mathrm{T}}$. Of particular interest  to us is the case when
$(u_{\mathrm{B}},u_{\mathrm{T}})$ is a spacetime-stationary solution
to \eqref{SBE-many} as constructed in~\cite[Theorem 1.2]{DGR19}.
Assume that $u_{\mathrm{B}}(t,x)<u_{\mathrm{T}}(t,x)$ for all $t$
and $x$ almost surely. In the deterministic case, the viscous shock
profile is stationary in the reference frame that moves with the constant
speed~$b$ of the shock. In the random case, the triple $(u_{\mathrm{B}},u_{\mathrm{T}},u)$
is not expected to be stationary in time, despite the time-stationarity
of the difference $u_{\mathrm{T}}-u_{\mathrm{B}}$ driving \eqref{dtU-intro},
because the shock location~$b_{t}$ need not be stationary. It is
natural to expect that $(u_{\mathrm{B}},u_{\mathrm{T}},u)$ would at
least be time-stationary in a reference frame moving along with $b_{t}$:
that is, that the randomly shifted triple~$\tau_{b_{0}-b_{t}}(u_{\mathrm{B}},u_{\mathrm{T}},u)$
would be time-stationary. Here, $\tau$ is the spatial translation
defined by
\be\label{aug3002}
\tau_{x}f(y)=f(y-x).
\ee 
This is not quite right either, because
$b_{t}$ is not independent of $(u_{\mathrm{B}},u_{\mathrm{T}})$.
We need to tilt the invariant measure to account for this dependence,
as described in the following theorem.
\begin{thm}
\label{thm:stationary-shocks}
Let $\nu$ be a space-translation-invariant
invariant measure for the dynamics \eqref{SBE-many} with $N=2$,
such that if $(v_{\mathrm{B}},v_{\mathrm{T}})\sim\nu$, then $\mathbb{E} v_{\mathrm{B}}(x)^2,\mathbb{E} v_{\mathrm{T}}(x)^2<\infty$ and $v_{\mathrm{B}}(x)<v_{\mathrm{T}}(x)$
for all $x\in\mathbb{R}$ almost surely. Fix $b\in\mathbb{R}$ and
define the measure $\hat{\nu}^{[b]}$, absolutely continuous with
respect to $\nu$, with Radon--Nikodym derivative
\begin{equation}
\frac{\dif\hat{\nu}^{[b]}}{\dif\nu}(v_{\mathrm{B}},v_{\mathrm{T}})=\frac{(v_{\mathrm{T}}-v_{\mathrm{B}})(b)}{\lim\limits _{L\to\infty}\frac{1}{L}\int_{0}^{L}[v_{\mathrm{T}}-v_{\mathrm{B}}](x)\,\dif x}.\label{eq:chgmeasure-intro}
\end{equation}
Fix $\gamma\in\mathbb{R}$ and
let~$(u_{\mathrm{B}},u_{\mathrm{T}},u)$ solve \eqref{SBE-many} with
initial condition $(u_{\mathrm{B}},u_{\mathrm{T}})\sim\hat{\nu}^{[b]}$,
independent of the noise, and
\[
u(0,x)=\mathscr{S}_{b,\gamma}[(u_{\mathrm{B}},u_{\mathrm{T}})(0,\cdot)].
\]
Let $b_{t}$ solve \eqref{bteqn} with $b_{0}=b$. Then for all $t\ge0$
we have
\begin{equation}
\Law(\tau_{b-b_{t}}(u_{\mathrm{B}},u_{\mathrm{T}},u)(t,\cdot))=\Law((u_{\mathrm{B}},u_{\mathrm{T}},u)(0,\cdot)).\label{eq:stationarity-intro}
\end{equation}
\end{thm}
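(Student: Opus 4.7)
The plan is to proceed in three steps: (i) reduce the claim about the triple $(u_\B,u_\T,u)$ to one about the pair $(u_\B,u_\T)$ alone, (ii) extract the correct analytic structure from~\eqref{SBE-many}, and (iii) run a Palm-calculus ``tagged particle'' argument.

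For step~(i), I first verify that $\mathscr{S}_{b,\gamma}$ is compatible with the translation $\tau_{b-b_t}$: the substitution $z=y-b+b_t$ inside the integral in \eqref{Sdef} gives $\tau_{b-b_t}u(t,\cdot)=\mathscr{S}_{b,\gamma}[\tau_{b-b_t}(u_\B,u_\T)(t,\cdot)]$. Since $u$ is a deterministic function of $(u_\B,u_\T,b_t,\gamma)$ via \eqref{shockprototype}, \eqref{stationarity-intro} is equivalent to $\Law(\tau_{b-b_t}(u_\B,u_\T)(t,\cdot))=\hat{\nu}^{[b]}$. A direct computation using the space-translation invariance of $\nu$ then shows that $\hat{\nu}^{[b]}$ is the pushforward of $\hat{\nu}^{[0]}$ under $v\mapsto\tau_b v$, so by the space-translation equivariance of both the SBE dynamics and the ODE~\eqref{bteqn} I may reduce to the case $b=0$.

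For step~(ii), I set $\rho:=u_\T-u_\B$ and $c:=\tfrac12(-\partial_x\log\rho+u_\B+u_\T)$. Subtracting the two equations in \eqref{SBE-many} (the noise cancels) yields the pure continuity equation $\partial_t\rho+\partial_x(\rho c)=0$, and comparing with \eqref{bteqn} one sees that $\dot b_t=c(t,b_t)$, i.e., $b_t$ is exactly a characteristic of the velocity field $c$. Under spatial ergodicity of $\nu$, the denominator in \eqref{chgmeasure-intro} equals the constant $\lambda:=\Em_\nu[\rho(0)]$, so $\hat{\nu}^{[0]}$ is precisely the Palm (size-biased) measure associated to the random spatial density $\rho(0,\cdot)$.

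For step~(iii), I want to show that if $(u_\B,u_\T)(0)\sim\hat{\nu}^{[0]}$ and $b_0=0$, then $\tau_{-b_t}(u_\B,u_\T)(t,\cdot)\sim\hat{\nu}^{[0]}$ for every $t\ge 0$. Let $b_t^{(y)}$ denote the solution of \eqref{bteqn} with $b_0=y$, and let $\mathbb{P}_\nu$ be the SBE path measure with $(u_\B,u_\T)(0)\sim\nu$. A direct check shows that shifting the trajectory by $\tau_{-y}$ replaces $b_t^{(0)}$ by $b_t^{(y)}-y$ and $\rho(0,0)$ by $\rho(0,y)$, so the space-translation invariance of $\mathbb{P}_\nu$ gives, for any bounded test $F$,
\[
\Em_{\mathbb{P}_\nu}\!\bigl[\rho(0,0)\,F(\tau_{-b_t^{(0)}}(u_\B,u_\T)(t,\cdot))\bigr]=\Em_{\mathbb{P}_\nu}\!\bigl[\rho(0,y)\,F(\tau_{-b_t^{(y)}}(u_\B,u_\T)(t,\cdot))\bigr]
\]
for all $y\in\Rm$. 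I then average over $y\in[0,L]$, change variables via $z=b_t^{(y)}$ using the mass-conservation identity $\rho(0,y)\,\dif y=\rho(t,z)\,\dif z$ coming from the continuity equation, and send $L\to\infty$. Since $(u_\B,u_\T)(t,\cdot)\sim\nu$ is space-stationary, spatial ergodicity converts the resulting spatial average into $\lambda\,\Em_{\hat{\nu}^{[0]}}F(v_\B,v_\T)$, which is exactly the desired invariance.

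The hardest piece will be step~(iii): I need global-in-time well-posedness of \eqref{bteqn} despite the singular coefficient $\partial_x\log\rho$, a careful statement of mass conservation along the random non-autonomous flow (which requires adequate regularity of $\rho$), and a sublinear bound on the boundary error $b_t([0,L])\setminus[0,L]$ as $L\to\infty$ so that the spatial averaging closes. The $L^1$-contraction machinery of \cite[Section~3]{DGR19}, together with the a priori bounds on $\nu$-distributed solutions in $\cX$, should supply these estimates; the conservation-of-mass reformulation in \lemref{ZtsolvestheODE} (which interprets $b_t$ through the KPZ height function) should also streamline the flow analysis.
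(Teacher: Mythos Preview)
Your approach is essentially the paper's: reduce to the pair $(u_\B,u_\T)$ via the translation identity $\tau_{b-b_t}\mathscr{S}_{b_t,\gamma}=\mathscr{S}_{b,\gamma}\circ\tau_{b-b_t}$, then exploit mass conservation for $\rho=u_\T-u_\B$ to convert the Palm tilt at time $0$ into the Palm tilt at time $t$ via a change of variables and spatial averaging. The paper packages the characteristic flow $y\mapsto b_t^{(y)}$ through the KPZ object $Z_{b,t}=\tfrac12(h_\T^{[b]}-h_\B^{[b]})$ (your $b_t^{(y)}$ is $Z_{y,t}^{-1}(0)$ by \lemref{ZtsolvestheODE}), so your Jacobian identity $\rho(0,y)\,\dif y=\rho(t,z)\,\dif z$ is exactly the paper's change of variables \eqref{chgvarintheintegral}, and your ``average over $[0,L]$ and pass to the limit'' is the paper's \eqref{chgvarbket}--\eqref{changeZtoM}.

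There is one genuine gap: you assume spatial ergodicity of $\nu$, but the theorem does not. In the non-ergodic case the denominator $B[v_\B,v_\T]=\lim_{L\to\infty}L^{-1}\int_0^L(v_\T-v_\B)$ is a nontrivial random variable, and your step~(iii) as written (with a deterministic $\lambda$) breaks down. The paper handles this by working with conditional expectations given the translation-invariant $\sigma$-algebra $\mathcal{I}$: one shows $B=\Em[\rho(b)\mid\mathcal{I}]$ so that $\Em[\rho(b)/B]=1$ (hence $\hat\nu^{[b]}$ is a probability measure), and the Birkhoff--Khinchin averages in your limit become $\Em[\,\cdot\mid\mathcal{I}]/B$ rather than constants. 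You can either carry the random $B$ through your argument in this way, or invoke an ergodic decomposition and apply your argument on each component; either route closes the gap with no new ideas, but you should make this explicit.
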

Note that the limit in the denominator in~\eqref{chgmeasure-intro} exists $\nu$-almost surely
by the Birkhoff--Khinchin theorem. 

According to \cite[Theorem 1.2]{DGR19}, any space-translation-invariant
invariant measure for \eqref{SBE-many} having bounded second moment
can be decomposed into a mixture of extremal such measures, which are
classified by their means. As in \cite{DGR19}, we use the
notation $\nu_{a_{\mathrm{B}},a_{\mathrm{T}}}$ for the extremal
measure with mean $(a_{\mathrm{B}},a_{\mathrm{T}})$, and we
write $\hat{\nu}_{a_{\mathrm{B}},a_{\mathrm{T}}}^{[b]}$
for the tilt of this measure defined by \eqref{chgmeasure-intro}.
If~$(v_{\mathrm{B}},v_{\mathrm{T}})\sim\nu_{a_{\mathrm{B}},a_{\mathrm{T}}}$,
then \cite[Theorem 1.2, property (P5)]{DGR19} and the Birkhoff-Khinchin
theorem imply that
\[
\lim\limits _{L\to\infty}\frac{1}{L}\int_{0}^{L}[v_{\mathrm{T}}-v_{\mathrm{B}}](x)\,\dif x=a_{\mathrm{T}}-a_{\mathrm{B}},
\]
so in that case the change of measure formula \eqref{chgmeasure-intro} has the simpler form
\begin{equation}
\frac{\dif\hat{\nu}_{a_{\mathrm{B}},a_{\mathrm{T}}}^{[b]}}{\dif\nu_{a_\B,a_\T}}(v_{\mathrm{B}},v_{\mathrm{T}})=\frac{(v_{\mathrm{T}}-v_{\mathrm{B}})(b)}{a_{\mathrm{T}}-a_{\mathrm{B}}}.\label{eq:chgmeasure-intro-ergodicones}
\end{equation}

Note that \eqref{stationarity-intro} includes the statement that
$\Law(\tau_{b-b_{t}}(u_{\mathrm{B}},u_{\mathrm{T}})(t,\cdot))=\Law((u_{\mathrm{B}},u_{\mathrm{T}})(0,\cdot))$. In fact, this statement contains most of the content of \eqref{stationarity-intro} once the semi-explicit nature of the shock profiles is understood as in the discussion following \eqref{Sdef}. Nonetheless, the change of measure~\eqref{chgmeasure-intro} can be more easily understood
in the context of the shocks. (A more direct computational
reason for the tilt can be found in \lemref{ZtsolvestheODE} below, and in particular,
in expression \eqref{dtZinverse}.)
The tilt \eqref{chgmeasure-intro} is a type of size-biasing (or ``mass-biasing''), arising from the mass
conservation of both the Burgers dynamics and the change of variables
\eqref{chgvar-intro}. The Burgers dynamics \eqref{SBE-many}
has the form of a conservation law and so preserves the integrals
of differences between solutions (as is proven formally in \cite[Proposition 3.3]{DGR19}).
As we show in \propref{L1explicit}, we have
\be\label{eq:shintegralconserved}
\int_{\mathbb{R}}(\mathscr{S}_{b,\gamma}[v_{\mathrm{B}},v_{\mathrm{T}}]-\mathscr{S}_{b,\gamma'}[v_{\mathrm{B}},v_{\mathrm{T}}])=\gamma-\gamma'
\ee
for any $b,\gamma,\gamma'\in\mathbb{R}$ and $(v_{\mathrm{B}},v_{\mathrm{T}})$
in an appropriate function space.  
This is why $\gamma$ remains fixed in the evolution~\eqref{shockprototype}
and is thus a convenient way to parametrize the shocks. 
Consider the entire ensemble of shocks~$(\mathscr{S}_{b_{t},\gamma}[(u_{\mathrm{B}},u_{\mathrm{T}})(t,\cdot)])_{\gamma\in\mathbb{R}}$
evolving together between upper and lower solutions~$u_{\mathrm{B}}$
and $u_{\mathrm{T}}$. Now consider $\gamma,\gamma'\in\mathbb{R}$ with $|\gamma-\gamma'|\ll1$, and let $b_{t}^{(\gamma')}$ be a solution to \eqref{bteqn} (with a different initial condition than $b_t$) such that
\be\label{eq:Shbtgammaprime}
\mathscr{S}_{b_{t},\gamma'}[(u_{\mathrm{B}},u_{\mathrm{T}})(t,\cdot)]=\mathscr{S}_{b_{t}^{(\gamma')},\gamma}[(u_{\mathrm{B}},u_{\mathrm{T}})(t,\cdot)].
\ee
It follows from \eqref{shintegralconserved} and \eqref{Shbtgammaprime} that  
\begin{align*}
\gamma-\gamma' & =\int(\mathscr{S}_{b_{t},\gamma}[(u_{\mathrm{B}},u_{\mathrm{T}})(t,\cdot)]-\mathscr{S}_{b_{t}^{(\gamma')},\gamma}[(u_{\mathrm{B}},u_{\mathrm{T}})(t,\cdot)])\sim(b_{t}-b_{t}^{(\gamma')})\cdot(u_{\mathrm{T}}-u_{\mathrm{B}})(t,b_{t})
\end{align*}
is independent of $t$, and we must have 
\[
b_{t}-b_{t}^{(\gamma')}\sim\frac{\gamma-\gamma'}{(u_{\mathrm{T}}-u_{\mathrm{B}})(t,b_{t})}.
\]
This means that in an interval of size $\eps\ll1$ around $b_{t}$, we may
expect to find $b_{t}^{(\gamma')}$ for 
\[
|\gamma'-\gamma|\lesssim\eps(u_{\mathrm{T}}-u_{\mathrm{B}})(t,b_{t}),
\]
hence the change of measure \eqref{chgmeasure-intro}. 

One may ask about the uniqueness of the stationary shock profile measures
given by \eqref{chgmeasure-intro}. This question is not entirely
well-posed because one must specify the reference frame in which we
require stationarity.
We give a uniqueness
statement for the shock  in \propref{classifyssps} if the reference frame is assumed to be given by a shock location $\{b_t\}_{t\ge0}$ satisfying \eqref{bteqn}. The more intrinsic definition of~$b_t$
in \secref{chgvar} indicates that this choice of the reference frame is quite natural 
but further work is needed to understand uniqueness without fixing a particular reference frame.

Stationary shock behavior has been extensively studied for asymmetric
simple exclusion processes, which are discrete microscopic models
for Burgers-type dynamics. Similar phenomenology occurs there: a shock
moves randomly through space, but in the reference frame of the shock
itself, there is a stationary measure for the particle system \cite{DLS97,Fer92,FKS91}.
We refer to the book \cite{Lig99} for more discussion and references.

\subsubsection*{Stability of the random shocks}

We now turn to the stability of the shocks \eqref{shockprototype}.
The study of the stability of the shocks \eqref{deterministic-shocks}
in the deterministic case has a long history. Without any attempt
at completeness, we mention in particular the works \cite{FS98,Goo86,Hop50,IO58,IO60,JGK93,KV17,KM85,Nis85,OR82,Peg85,Pel71,Sat76}
and the books \cite{Daf16,Ser04}. In a similar spirit to our problem
is \cite{WX96}, which shows convergence to shock waves when the equation
is deterministic but the initial condition is random. As the Burgers
equation is nonlinear, these issues are closely related.

In the present stochastic setting, we show that if the initial condition is
sandwiched between two hyperbolic tangent functions (translated and
scaled appropriately), with the same limits at infinity, then an intermediate
solution, shifted appropriately, converges to a shock of the form
\eqref{shockprototype}. Actually, we show a somewhat stronger statement,
that if we consider a finite collection of such solutions, then they
converge jointly to a family of such shocks. In the following theorem, as above,
$\mathcal{X}$ denotes the Fréchet space of continuous functions on
$\mathbb{R}$ growing more slowly at infinity than $(1+|x|)^{\ell}$
for all $\ell>1/2$, equipped with the corresponding family of weighted seminorms
specified in \secref{stufffromthelastpaper}.
\begin{thm}
\label{thm:shock-stability}Fix real constants $a_{\mathrm{B}}<a_{\mathrm{T}}$ and
$\gamma_{\mathrm{L}}<\gamma_{\mathrm{R}}$. Let
$(u_{\mathrm{B}},u_{\mathrm{T}},u_{1},\ldots,u_{\mathrm{N}})\in\mathcal{C}([0,\infty);\mathcal{X}^{2+N})$
solve \eqref{SBE-many} with initial conditions $u_{\mathrm{B}}(0,\cdot)\equiv a_{\mathrm{B}}$,
$u_{\mathrm{T}}(0,\cdot)\equiv a_{\mathrm{T}}$, and for all $x\in\mathbb{R}$
and $i=1,\ldots,N$,
\[
\mathscr{S}_{0,\gamma_{\mathrm{L}}}[a_{\mathrm{B}},a_{\mathrm{T}}](x)\le u_{i}(0,x)\le\mathscr{S}_{0,\gamma_{\mathrm{R}}}[a_{\mathrm{B}},a_{\mathrm{T}}](x).
\]
For each $i=1,\ldots,N$, let $b^{(i)}$ be the unique $b$ so that
\begin{equation}
\int_{-\infty}^{b}[u_{\mathrm{T}}-u_{i}](0,x)\,\dif x=\int_{b}^{\infty}[u_{i}-u_{\mathrm{B}}](0,x)\,\dif x,\label{eq:setbi}
\end{equation}
and let $b_{t}^{(i)}$ solve \eqref{bteqn} with initial condition
$b_{0}^{(i)}=b^{(i)}$. Let $(v_{\mathrm{B}},v_{\mathrm{T}})\sim\hat{\nu}_{a_{\mathrm{B}},a_{\mathrm{T}}}^{[b^{(1)}]}$
(defined after \thmref{stationary-shocks}) and for $i=1,\ldots,N$,
put
\[
v_{i}=\mathscr{S}_{b^{(1)},b^{(i)}-b^{(1)}}[v_{\mathrm{B}},v_{\mathrm{T}}]
\]
and $\mathbf{v}=(v_{\mathrm{B}},v_{\mathrm{T}},v_{1},\ldots,v_{\mathrm{N}})$.
Then we have
\begin{equation}
\Law(\tau_{b^{(1)}-b_{t}^{(1)}}(u_{\mathrm{B}},u_{\mathrm{T}},u_{1},\ldots,u_{N})(t,\cdot))\to\Law\mathbf{v}\label{eq:convinlaw}
\end{equation}
weakly with respect to the topology of $\mathcal{X}^{2+N}$. Also,
with probability $1$ we have
\begin{equation}
\lim_{t\to\infty}\|u_{i}(t,\cdot)-\mathscr{S}_{b_{t}^{(i)},0}[(u_{\mathrm{B}},u_{\mathrm{T}})(t,\cdot)]\|_{L^{1}(\mathbb{R})}=0.\label{eq:trueL1conv}
\end{equation}
\end{thm}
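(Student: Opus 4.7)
The plan is to establish the two statements in sequence: first the almost-sure $L^{1}$ convergence \eqref{trueL1conv}, then the weak convergence \eqref{convinlaw}. The overall strategy is a stochastic analogue of the Osher--Ralston argument for stability of viscous shocks, combined with the invariance of the tilted measure $\hat{\nu}_{a_{\mathrm{B}},a_{\mathrm{T}}}^{[b^{(1)}]}$ provided by \thmref{stationary-shocks} to identify the limit.

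For \eqref{trueL1conv}, the comparison principle for \eqref{SBE-many}, which preserves ordering of solutions driven by a common noise, together with the sandwich assumption at $t=0$, keeps $u_{i}(t,\cdot)$ squeezed between the time-$t$ evolutions of the two bounding shocks of the form \eqref{shockprototype} with conserved $\gamma$-parameters $\gamma_{\mathrm{L}}$ and $\gamma_{\mathrm{R}}$. By \eqref{shintegralconserved} their $L^{1}$ separation equals $\gamma_{\mathrm{R}}-\gamma_{\mathrm{L}}$ for all $t\ge 0$, so $u_{i}(t,\cdot)-\mathscr{S}_{b_{t}^{(i)},0}[(u_{\mathrm{B}},u_{\mathrm{T}})(t,\cdot)] \in L^{1}(\mathbb{R})$ uniformly in~$t$. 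The relation \eqref{setbi} fixes the signed integral of this difference at time~$0$, and this integral is preserved by the conservation-law structure of \eqref{SBE-many} (formalized in Proposition~3.3 of \cite{DGR19}). The $L^{1}$ contraction for \eqref{SBE} then makes $\|u_{i}(t,\cdot)-\mathscr{S}_{b_{t}^{(i)},0}\|_{L^{1}}$ non-increasing, and strict decay to zero follows from the standard observation that two Burgers solutions of equal signed integral must cross, and crossings force the parabolic flow to strictly dissipate the $L^{1}$ norm.

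For \eqref{convinlaw}, \eqref{trueL1conv} together with the continuity of $\mathscr{S}$ reduces matters to the joint weak convergence of $\tau_{b^{(1)}-b_{t}^{(1)}}(u_{\mathrm{B}},u_{\mathrm{T}})(t,\cdot)$ and the offsets $b_{t}^{(i)}-b_{t}^{(1)}$. I would prove this via a coupling: sample $(\tilde{u}_{\mathrm{B}},\tilde{u}_{\mathrm{T}})(0,\cdot)\sim\hat{\nu}_{a_{\mathrm{B}},a_{\mathrm{T}}}^{[b^{(1)}]}$ independently of the noise driving $(u_{\mathrm{B}},u_{\mathrm{T}})$, evolve it under \eqref{SBE-many} with that same noise, and let $\tilde{b}_{t}^{(1)}$ be the corresponding shock position. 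By \thmref{stationary-shocks}, $\tau_{b^{(1)}-\tilde{b}_{t}^{(1)}}(\tilde{u}_{\mathrm{B}},\tilde{u}_{\mathrm{T}})(t,\cdot)$ has law $\hat{\nu}_{a_{\mathrm{B}},a_{\mathrm{T}}}^{[b^{(1)}]}$ for every $t$, so it remains to show that the two coupled systems synchronize as $t\to\infty$: the difference $(u_{\mathrm{B}},u_{\mathrm{T}})(t,\cdot)-(\tilde{u}_{\mathrm{B}},\tilde{u}_{\mathrm{T}})(t,\cdot)$ vanishes in the Fréchet topology on $\cX^{2}$, and the shock-position offset $b_{t}^{(1)}-\tilde{b}_{t}^{(1)}$ converges to a limit consistent with the mass-matching relation between the deterministic $b^{(i)}$ and the random integrated profile of $v_{\mathrm{T}}-v_{\mathrm{B}}$.

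The principal obstacle is this synchronization. Since the initial $L^{1}$ distance between the constants $(a_{\mathrm{B}},a_{\mathrm{T}})$ and the stationary sample $(\tilde{u}_{\mathrm{B}},\tilde{u}_{\mathrm{T}})(0,\cdot)$ is almost surely infinite, the global $L^{1}$ contraction cannot be invoked directly. A natural approach is localization: bound the coupled difference on a fixed spatial window using a Duhamel expansion depending only on the noise and on initial data in a slightly larger window, and exploit the ergodicity of the stationary measure from \cite{DGR19} to show that this difference vanishes on compacta. Alternatively, a one-force-one-solution principle for \eqref{SBE-many} applicable to bounded-second-moment initial data---plausibly extractable from the long-time attraction results of \cite[Theorem~1.1]{DGR19}---would yield the required synchronization, and with it the convergence of the shock-position offsets, directly.
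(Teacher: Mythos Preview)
Your proposal has genuine gaps in both parts, and in each case the paper takes a route that sidesteps the obstacle you identify.

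For \eqref{trueL1conv}, the sentence ``strict decay to zero follows from the standard observation that two Burgers solutions of equal signed integral must cross, and crossings force the parabolic flow to strictly dissipate the $L^{1}$ norm'' is not a proof. Strict dissipation at crossing times only tells you that the $L^{1}$ norm is strictly decreasing while crossings persist; it does not show the limit is zero. In the deterministic Osher--Ralston setting one closes this by an $\omega$-limit argument exploiting time-autonomy, but here the environment seen from the shock is not autonomous (the noise is not stationary in that frame), so that step does not transfer. The paper does not attempt a direct dissipation argument. Instead it first proves a \emph{time-averaged} weak convergence result (\propref{timeaveragedconvergence}) via Krylov--Bogolyubov: tightness of $\{\tau_{b-b_t}\mathbf{u}(t,\cdot)\}_{t\ge 1}$ plus the Feller property yields an invariant measure for $\hat P^{[b]}$, and the classification of stationary shock profiles (\propref{classifyssps}) forces any such limit to be the explicit shock. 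Only then is the $L^{1}$ convergence deduced (\propref{convergenceinL1}): the time-averaged convergence gives convergence in probability of the $L^{1}$ distance to the explicit shock along a uniform random time, and monotonicity upgrades this to almost-sure convergence along $t\to\infty$. Your ordering is reversed, and the direct route you propose is the one that lacks a mechanism to identify the limit as zero.

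For \eqref{convinlaw}, you correctly flag synchronization as the principal obstacle in your coupling scheme, but you do not resolve it, and the paper does not attempt it either. The paper's argument for the $(u_{\mathrm{B}},u_{\mathrm{T}})$ marginal in the shock frame is \propref{convergetotildenu}, which rests on the tilting identity of \propref{tiltedpreserved}: $(\hat P_t^{[b]})^*\delta_{a_{\mathrm{B}},a_{\mathrm{T}}}$ is the size-biased tilt of $P_t^*\delta_{a_{\mathrm{B}},a_{\mathrm{T}}}$, so convergence of the latter to $\nu_{a_{\mathrm{B}},a_{\mathrm{T}}}$ (from \cite{DGR19}) plus uniform integrability of the Radon--Nikodym derivative (via the $L^{2}$ bound) gives convergence of the former to $\hat\nu_{a_{\mathrm{B}},a_{\mathrm{T}}}^{[b]}$. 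No coupling or one-force-one-solution principle is needed. Once that marginal convergence and the $L^{1}$ convergence \eqref{trueL1conv} are in hand, tightness plus continuity of $\mathscr{S}$ identifies every subsequential limit of the full vector, which is how the paper finishes.
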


We recall that, as far as the stability of $u_{\mathrm{B}}$ and $u_{\mathrm{T}}$
themselves is concerned, it was shown in~\cite[Theorem 1.3]{DGR19}
that if $\mathbf{u}\in\mathcal{C}([0,\infty);\mathcal{X}^{N})$ solves
\eqref{SBE-many} with initial condition that is a decaying perturbation of a spatially-periodic state 
$\mathbf{u}(0,\cdot)$,
then $\Law(\mathbf{u}(t,\cdot))$ converges to~$\nu_{a_{1},\ldots,a_{N}}$ weakly with respect to the topology of
$\mathcal{X}^{N}$ as $t\to\infty$. Even stronger results are available
for the stability of the spacetime-stationary solutions for the kick
forcing of the Burgers equation considered in \cite{BL19}. \thmref{shock-stability}, however, only considers the case when the top and bottom solutions are initially constant in space.

\subsubsection*{An interpretation of the shocks in terms of the Cole--Hopf transform}

The Burgers viscous shocks can be interpreted in terms of the Cole--Hopf
transform \cite{BG97,Col51,Hop50}. Recall that if $\phi$ solves
the multiplicative stochastic heat equation (SHE)
\begin{equation}
\dif\phi=\frac{1}{2}\partial_{x}^{2}\phi-\phi\dif V,\label{eq:SHE}
\end{equation}
then $h=-\log\phi$ solves the KPZ equation~\cite{KPZ86}
\begin{equation}
\dif h=\frac{1}{2}[\partial_{x}^{2}h-(\partial_{x}h)^{2}+\|\rho\|_{L^{2}(\mathbb{R})}^{2}]\dif t+\dif V,\label{eq:KPZ}
\end{equation}
and $u=\partial_{x}h=-(\partial_{x}\phi)/\phi$ solves the stochastic
Burgers equation \eqref{SBE}. Of course, this transform can be extended
to the system of equations \eqref{SBE-many}. The multiplicative SHE
\eqref{SHE} has the obvious advantage of being linear, but for our
purposes both \eqref{SHE} and \eqref{KPZ} have the disadvantage
that they do not admit spacetime-stationary solutions. Spacetime-stationary
solutions only arise when the derivative is taken to form $u$, which
destroys the growing zero-frequency mode of $h$.

Nonetheless, the stable viscous shock solutions \eqref{shockprototype}
have a simple interpretation in terms of solutions to the SHE \eqref{SHE}.
Indeed, if for $\mathrm{X}\in\{\mathrm{B},\mathrm{T}\}$, we have
$u_{\mathrm{X}}=-(\partial_{x}\phi_{\mathrm{X}})/\phi_{\mathrm{X}}$,
and $\phi_{\mathrm{X}}$ solves~\eqref{SHE}, then by linearity $\phi_{\mathrm{B}}+\phi_{\mathrm{T}}$
solves \eqref{SHE} as well, so that
\[
u=-\frac{\partial_{x}(\phi_{\mathrm{B}}+\phi_{\mathrm{T}})}{\phi_{\mathrm{B}}+\phi_{\mathrm{T}}}=\frac{u_{\mathrm{B}}}{1+\phi_{\mathrm{T}}/\phi_{\mathrm{B}}}+\frac{u_{\mathrm{T}}}{1+\phi_{\mathrm{B}}/\phi_{\mathrm{T}}}
\]
solves \eqref{SBE}. Noting that
\[
(\phi_{\mathrm{T}}/\phi_{\mathrm{B}})(t,x)=(\phi_{\mathrm{T}}/\phi_{\mathrm{B}})(t,0)\exp\left\{ -\int_{0}^{x}[u_{\mathrm{T}}-u_{\mathrm{B}}](t,y)\,\dif y\right\} ,
\]
we recover an expression of the form \eqref{shockprototype} by appropriate
choices of $\gamma$ and $b_{t}$.

Another, even more explicit, perspective considers the KPZ equation
in relation to the change of variables \eqref{chgvar-intro}. As we
show in \lemref{ZtsolvestheODE}, solutions to \eqref{bteqn} are
given by inverting (as a function from $\mathbb{R}\to\mathbb{R}$)
half the difference between two solutions to \eqref{KPZ}, started
at the corresponding integrals of the initial conditions for $u_{\mathrm{B}}$
and $u_{\mathrm{T}}$. Therefore, the integral appearing in the change
of variables \eqref{chgvar-intro} is exactly half the difference
of two solutions to \eqref{KPZ}. In addition, as shown in \lemref{ZtsolvestheODE},
the definition of the shock location $b_t$ is more naturally given in terms
of the solution to~\eqref{KPZ} than directly in terms of the Burgers equation itself.

Estimating the scale of fluctuations of $b_{t}$ is thus a question
about the growth of the difference between two solutions to \eqref{KPZ}.
Long-time statistics for solutions to \eqref{KPZ} are in general
difficult to estimate, especially in non-integrable cases such as
ours where exact calculations are not available. See \cite{BQS11,BCFV15,CH16,Nej20b,Nej20} and their references
for some results for integrable models, and \cite{BK18} for more
background and conjectures in this direction. We do not address the
question of estimating $b_{t}$ in the present paper, reserving it
for future work.

\paragraph{Organization of the paper}

We begin by introducing the relevant function spaces and recalling
the necessary setup and results from \cite{DGR19} in \secref{stufffromthelastpaper}.
We discuss the change of variables \eqref{chgvar-intro}, the resulting
PDE \eqref{dtU-intro}, and the explicit shock solutions \eqref{shockprototype}
in \secref{chgvar}. We derive the change of measure \eqref{chgmeasure-intro}
and prove \thmref{stationary-shocks} in \secref{BTsolns}. In \secref{uniquenessofshockprofiles},
we discuss more general shock profiles and give a partial characterization
of a certain notion of stationary shock profile (assuming some nontrivial
integrability conditions). Finally, we prove our stability result
\thmref{shock-stability} in \secref{stability}. A technical lemma
is relegated to \appendixref{techlemma}.

\paragraph{Acknowledgments}

We thank Erik Bates, Ivan Corwin, and Cole Graham for interesting
discussions. This work was supported
by NSF grants DGE-1147470, DMS-1613603, DMS-1910023, and DMS-2002118, BSF grant 2014302, and ONR grant N00014-17-1-2145.

\section{Function spaces and spacetime-stationary solutions\label{sec:stufffromthelastpaper}}

Because the viscous shock solutions to \eqref{SBE} are so intimately
tied to the spacetime-stationary solutions they connect, we rely on
the framework and many ingredients from \cite{DGR19}. Here, we review
the setup and quote some of the results we will use.

First, we recall some definitions and set the notation. For a positive
weight $w=w(x)$, we denote by $\mathcal{C}_{w}$ the Banach space
of continuous functions $f:\mathbb{R}\to\mathbb{R}$ such that the
norm
\[
\|f\|_{\mathcal{C}_{w}}=\sup_{x\in\mathbb{R}}\frac{|f(x)|}{w(x)}
\]
is finite. Given $\ell\in\mathbb{R}$, we set $\p_{\ell}=\langle x\rangle^{\ell}$,
where $\langle x\rangle=\sqrt{4+x^{2}}$, and let
\[
\mathcal{X}=\bigcap_{\ell>1/2}\mathcal{C}_{\p_{\ell}},
\]
equipped with the Fréchet space topology induced by the family of
norms $\{\|\cdot\|_{\mathcal{C}_{\p_{1/2+1/k}}}\}_{k\in\mathbb{N}}$.
This space is denoted by $\mathcal{X}_{1/2}$ in \cite{DGR19}. The
space $\mathcal{X}$ is separable and hence a Polish space.

The equation \eqref{SBE-many} is well-posed in $\mathcal{X}^{N}$,
as was proved in \cite[Theorem 1.1]{DGR19}. In particular, there
is a random solution map $\Psi:\mathcal{X}^{N}\to\mathcal{C}([0,\infty);\mathcal{X}^{N})$
for the equation \eqref{SBE-many}. The map $\Psi$ is almost surely
continuous with respect to the locally uniform topology on $\mathcal{C}([0,\infty);\mathcal{X}^{N})$.
It was also shown in \cite{DGR19} that \eqref{SBE-many} has a comparison
principle (\cite[Proposition 3.1]{DGR19}), and if the difference
of two components of a solution to \eqref{SBE-many} is in $L^{1}(\mathbb{R})$
at $t=0$, then its $L^{1}(\mathbb{R})$ norm is non-increasing in
time (\cite[Proposition 3.2]{DGR19}).

As we have mentioned, it is shown in \cite{DGR19} that for any given
set of means $a_{1},\dots,a_{N}$, there is a unique extremal space-translation-invariant
and \eqref{SBE-many}-invariant measure $\nu_{a_{1},\ldots,a_{N}}$
on $\mathcal{X}^{N}$ such that if $\mathbf{v}=(v_{1},\ldots,v_{N})\sim\nu_{a_{1},\ldots,a_{N}}$,
then $\mathbb{E}v_{i}(x)=a_{i}$ and $\mathbb{E}v_{i}(x)^{2}<\infty$
for all $x\in\mathbb{R}$. Here, ``extremal'' means that the measure
cannot be written as a nontrivial convex combination of measures with
the same properties.

In deriving properties of the shock solutions, it will be convenient
to state some necessary properties of the ``bottom'' and ``top''
spatially-stationary solutions in a nonprobabilistic way. We encode
these properties in the function space
\begin{equation}
\mathcal{X}_{\mathrm{BT}}=\left\{ (v_{\mathrm{B}},v_{\mathrm{T}})\in\mathcal{X}^{2}\ :\ v_{\mathrm{B}}<v_{\mathrm{T}}\text{ and }\lim_{x\to\pm\infty}\int_{0}^{x}[v_{\mathrm{T}}-v_{\mathrm{B}}](y)\,\dif y=\pm\infty\right\} ,\label{eq:XBTdef}
\end{equation}
as previously defined in \eqref{XBTdefbis}.
The conditions in \eqref{XBTdef} are necessary so that the change of variables~\eqref{chgvar-intro}
is invertible.

The next two lemmas in this section are technical in nature. \lemref{XBTPolish}
shows that $\mathcal{X}_{\mathrm{BT}}$ is a Polish space, so we can
apply standard probabilistic tools such as Prokhorov's theorem and
the Skorokhod representation theorem. \lemref{dynamicspreservelimits}
shows that the dynamics \eqref{SBE-many} preserves $\mathcal{X}_{\mathrm{BT}}$,
so we can think of solutions to \eqref{SBE-many} as Markov processes
on $\mathcal{X}_{\mathrm{BT}}$.
\begin{lem}
The space $\mathcal{X}_{\mathrm{BT}}$ is a Polish space.\label{lem:XBTPolish}
\end{lem}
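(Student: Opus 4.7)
The plan is to realize $\mathcal{X}_{\mathrm{BT}}$ as a $G_\delta$ subset of the Polish space $\mathcal{X}^2$ and then invoke Alexandrov's theorem, which states that any $G_\delta$ subset of a Polish space is itself Polish in the subspace topology. Since $\mathcal{X}$ is Polish (as noted in the excerpt), so is $\mathcal{X}^2$, and the problem reduces to decomposing the two conditions in \eqref{XBTdef} into a countable intersection of open sets.

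First I would handle the strict pointwise ordering $v_{\mathrm{B}} < v_{\mathrm{T}}$. Convergence in $\mathcal{X}$ implies locally uniform convergence (the seminorm $\|\cdot\|_{\mathcal{C}_{\p_{1/2+1/k}}}$ dominates the sup norm on any compact interval), so for every $n \in \mathbb{N}$ the map $(v_{\mathrm{B}}, v_{\mathrm{T}}) \mapsto \inf_{|x|\le n}(v_{\mathrm{T}} - v_{\mathrm{B}})(x)$ is continuous on $\mathcal{X}^2$. Hence
\[
A_1 := \bigcap_{n=1}^{\infty}\bigl\{(v_{\mathrm{B}}, v_{\mathrm{T}}) \in \mathcal{X}^2 : \inf_{|x|\le n}(v_{\mathrm{T}} - v_{\mathrm{B}})(x) > 0\bigr\}
\]
is a $G_\delta$ set which coincides with the set of pairs satisfying the pointwise ordering.

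Next I would address the infinite-integral conditions. For each $X \in \mathbb{N}$ the functional $F_X(v_{\mathrm{B}}, v_{\mathrm{T}}) := \int_0^X (v_{\mathrm{T}} - v_{\mathrm{B}})(y)\,\dif y$ is continuous on $\mathcal{X}^2$ (polynomially weighted dominated convergence on a compact interval), so $\{F_X > M\}$ and $\{F_{-X} < -M\}$ are open. On $A_1$ the map $X \mapsto F_X$ is strictly monotone, and therefore $\lim_{x\to\infty}\int_0^x(v_{\mathrm{T}} - v_{\mathrm{B}}) = +\infty$ is equivalent to $\sup_{X\in\mathbb{N}} F_X > M$ for every $M \in \mathbb{N}$, and similarly at $-\infty$. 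Consequently,
\[
\mathcal{X}_{\mathrm{BT}} = A_1 \cap \bigcap_{M\in\mathbb{N}}\bigcup_{X\in\mathbb{N}}\{F_X > M\} \cap \bigcap_{M\in\mathbb{N}}\bigcup_{X\in\mathbb{N}}\{F_{-X} < -M\}
\]
is a countable intersection of open sets, hence $G_\delta$, and Alexandrov's theorem completes the proof.

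The only mild obstacle is that, written naively, the "$\lim = \pm\infty$" clauses appear to sit higher in the Borel hierarchy than $G_\delta$; the key observation is that the monotonicity supplied by $A_1$ collapses them back to $G_\delta$, allowing the whole intersection to be $G_\delta$ rather than $F_{\sigma\delta}$ or worse.
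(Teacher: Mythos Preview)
Your proof is correct and follows essentially the same approach as the paper: write $\mathcal{X}_{\mathrm{BT}}$ as a $G_\delta$ subset of the Polish space $\mathcal{X}^2$ and invoke Alexandrov's theorem. If anything, you are more explicit than the paper about why the monotonicity coming from the ordering condition is needed to reduce the ``$\lim=\pm\infty$'' clauses to $G_\delta$ sets; the paper writes the corresponding set equality directly, which is only literally correct after intersecting with the ordering set.
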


\begin{proof}
We can write
\[
\{(v_{\mathrm{B}},v_{\mathrm{T}})\in\mathcal{X}^{2}\ :\ v_{\mathrm{B}}<v_{\mathrm{T}}\}=\bigcap_{L\in\mathbb{N}}\{(v_{\mathrm{B}},v_{\mathrm{T}})\in\mathcal{X}^{2}\ :\ v_{\mathrm{B}}(x)<v_{\mathrm{T}}(x)\text{ for all }x\in[-L,L]\}
\]
and
\begin{align*}
 & \left\{ (v_{\mathrm{B}},v_{\mathrm{T}})\in\mathcal{X}^{2}\ :\ \lim_{x\to\pm\infty}\int_{0}^{x}[v_{\mathrm{T}}-v_{\mathrm{B}}](y)\,\dif y=\pm\infty\right\} \\
 & \qquad=\bigcap_{M\in\mathbb{N}}\left(\bigcup_{L\in(0,\infty)}\left\{ (v_{\mathrm{B}},v_{\mathrm{T}})\in\mathcal{X}^{2}\ :\ \pm\int_{0}^{\pm L}[v_{\mathrm{T}}-v_{\mathrm{B}}](y)\,\dif y>M\right\} \right).
\end{align*}
Therefore, $\mathcal{X}_{\mathrm{BT}}$ is a countable
intersection of open subsets of $\mathcal{X}^{2}$, or in other words a~$G_{\delta}$ subset of the
Polish space $\mathcal{X}^{2}$. By Alexandrov's theorem (see e.g.
\cite[Theorem~2.2.1]{Sri98}) a $G_{\delta}$ subset of a Polish space
is again a Polish space.
\end{proof}
\begin{lem}
\label{lem:dynamicspreservelimits}If $\mathbf{u}\in\mathcal{C}([0,\infty);\mathcal{X}^{2+N})$
solves \eqref{SBE-many} with initial condition $\mathbf{u}(0,\cdot)\in\mathcal{X}_{\mathrm{BT}}\times\mathcal{X}^{N}$,
then with probability $1$ we have, for all $t\ge0$, that $\mathbf{u}(t,\cdot)\in\mathcal{X}_{\mathrm{BT}}\times\mathcal{X}^{N}$. 
\end{lem}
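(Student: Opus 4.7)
}
The additional components $u_1,\dots,u_N$ do not enter the definition of $\mathcal{X}_{\mathrm{BT}}$, so I need only verify that if $(u_\B,u_\T)(0,\cdot)\in\mathcal{X}_{\mathrm{BT}}$, then the two defining properties---strict ordering $u_\B<u_\T$ and divergence of $\int_0^x[u_\T-u_\B]$ at $\pm\infty$---are preserved by the dynamics for all $t\ge0$ almost surely. These two properties will be handled by two different ingredients, respectively a strong maximum principle and a Feynman--Kac argument.

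For the strict ordering, set $w=u_\T-u_\B$. Because $u_\B$ and $u_\T$ share the driving noise $V$, the stochastic forcing cancels under subtraction, and $w$ satisfies the pathwise linear parabolic equation
\begin{equation*}
\partial_t w = \tfrac12\partial_x^2 w - \tfrac12\partial_x\bigl((u_\B+u_\T)\,w\bigr),
\end{equation*}
whose coefficient $u_\B+u_\T\in\mathcal{C}([0,\infty);\mathcal{X})$ is smooth in $x$ for $t>0$ thanks to parabolic regularization. The weak comparison principle \cite[Proposition~3.1]{DGR19}, applied to the assumption $u_\B(0,\cdot)\le u_\T(0,\cdot)$, gives $w\ge0$; combined with the strict initial inequality $w(0,\cdot)>0$, the classical strong maximum principle then yields $w(t,x)>0$ for every $t\ge0$ and $x\in\mathbb{R}$ almost surely.

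For the integral divergence I pass to the Cole--Hopf primitive $\psi(t,x):=\int_0^x w(t,y)\,\dif y$, which is the difference of two KPZ solutions. Subtracting the two KPZ equations kills both the noise $\dif V$ and the Itô correction $\|\rho\|_{L^2}^2$, and after collecting a spatially-constant integration term $c(t)$ one finds
\begin{equation*}
\partial_t\psi = \tfrac12\partial_x^2\psi - \tfrac12(u_\B+u_\T)\partial_x\psi + c(t).
\end{equation*}
The Feynman--Kac representation reads
\begin{equation*}
\psi(t,x) = \mathbb{E}^B\bigl[\psi(0,Y^{t,x}_t)\bigr] + \int_0^t c(s)\,\dif s,
\end{equation*}
where $B$ is an auxiliary Brownian motion independent of $V$ and
\begin{equation*}
\dif Y^{t,x}_s = -\tfrac12(u_\B+u_\T)(t-s,Y^{t,x}_s)\,\dif s + \dif B_s, \qquad Y^{t,x}_0=x.
\end{equation*}
Because $u_\B+u_\T\in\mathcal{X}$ grows at most like $\langle y\rangle^{1/2+\eps}$ for every $\eps>0$, a Grönwall bootstrap gives $\mathbb{E}^B|Y^{t,x}_t-x|^p \le C\langle x\rangle^{p(1/2+\eps)}$, so that $Y^{t,x}_t\to\pm\infty$ in $B$-probability as $x\to\pm\infty$, uniformly on compact time intervals. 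Since $\psi(0,\cdot)$ is monotone with $\lim_{y\to\pm\infty}\psi(0,y)=\pm\infty$, monotone convergence applied to the positive and negative parts of $\psi(0,\cdot)$ forces $\psi(t,x)\to\pm\infty$ as $x\to\pm\infty$, simultaneously for all $t$ in any finite interval $[0,T]$ on a full-measure event; a countable intersection over integer $T$ yields the claim for all $t\ge 0$.

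The main obstacle is to justify the Feynman--Kac formula and the moment bounds on $Y^{t,x}$ rigorously, given that the drift $u_\B+u_\T$ is a random continuous coefficient whose regularity at the initial time is only the $\mathcal{X}$-regularity of the initial data. This is handled by a routine localization--approximation exploiting the $\mathcal{C}([0,\infty);\mathcal{X})$ regularity of solutions to \eqref{SBE-many} established in \cite{DGR19} together with parabolic smoothing for $t>0$.
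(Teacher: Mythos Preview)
Your argument is correct in outline but takes a genuinely different route from the paper for the integral-divergence part. For the strict ordering $u_\B<u_\T$ both you and the paper invoke essentially the same strong comparison principle (the paper simply cites it from \cite[Proposition~3.1]{DGR19}). For the divergence of $\int_0^x[u_\T-u_\B]$, however, the paper avoids Feynman--Kac entirely: it tests the conservation-law equation $\partial_t w=\tfrac12\partial_x^2w-\tfrac12\partial_x((u_\B+u_\T)w)$ against a weight $\omega_\delta(x)=\chi(x)\exp\{2^{1-\ell}-\langle\delta x\rangle^{1-\ell}\}$ satisfying $|\omega_\delta'|\le C\delta^{1-\ell}\p_\ell^{-1}\omega_\delta$, integrates by parts, and obtains a Gr\"onwall inequality for $\int w\,\omega_\delta$ whose rate constant vanishes as $\delta\downarrow0$; since $\int w(0,\cdot)\,\omega_\delta\to\infty$ by hypothesis, the same holds at every $t$. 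This is purely deterministic once one uses the pathwise bound $\sup_{[0,T]}\|u_\B+u_\T\|_{\mathcal{C}_{\p_\ell}}<\infty$, and sidesteps all of your flagged obstacles: no auxiliary SDE, no Feynman--Kac justification, no moment bounds on $Y^{t,x}$. Your approach is more probabilistic and arguably more transparent, but to make the monotone-convergence step rigorous you should explicitly couple the $Y^{t,x}$ for different $x$ through the same Brownian motion and invoke the comparison theorem for one-dimensional SDEs to obtain pathwise monotonicity of $x\mapsto Y^{t,x}_t$; without this, convergence $Y^{t,x}_t\to+\infty$ merely in $B$-probability does not by itself force $\mathbb{E}^B[\psi(0,Y^{t,x}_t)]\to+\infty$, since $\psi(0,\cdot)$ is unbounded below.
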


\begin{proof}
Let us write $\mathbf{u}=(\mathbf{u}_{\mathrm{BT}},\tilde{\mathbf{u}})=((u_{\mathrm{B}},u_{\mathrm{T}}),\tilde{\mathbf{u}})$.
The comparison principle (\cite[Theorem 3.1]{DGR19}) implies that,
with probability $1$, we have $u_{\mathrm{B}}(t,x)<u_{\mathrm{T}}(t,x)$
for all $t\ge0$ and all $x\in\mathbb{R}$. Thus it remains to prove
that, with probability $1$, we have for all $t\ge0$ that
\begin{equation}
\lim_{x\to\pm\infty}\int_{0}^{x}[u_{\mathrm{T}}-u_{\mathrm{B}}](t,y)\,\dif y=\pm\infty.\label{eq:limitsstayinfinity}
\end{equation}
We will prove that $+$ case of \eqref{limitsstayinfinity}; the $-$
case is analogous. The proof proceeds in a similar manner to that
of \cite[Proposition 3.3]{DGR19}. Fix $\ell>1/2$ and define
\[
\zeta(x)=\e^{2^{1-\ell}-\langle x\rangle^{1-\ell}},\qquad x\in\mathbb{R},
\]
and let $\chi$ be a smooth positive function so that $\chi|_{(-\infty,-1]}\equiv0$
and $\chi|_{[0,\infty)}\equiv1$. For $\delta>0$ we set
\[
\zeta_{\delta}(x)=\zeta(\delta x)\qquad\text{and}\qquad\omega_{\delta}(x)=\chi(x)\zeta_{\delta}(x).
\]
Then we have
\begin{align}
\frac{\dif}{\dif t}\int_{\mathbb{R}}[u_{\mathrm{T}}-u_{\mathrm{B}}](t,x)\omega_{\delta}(x)\,\dif x & =\frac{1}{2}\int_{\mathbb{R}}[\partial_{x}^{2}(u_{\mathrm{T}}-u_{\mathrm{B}})-\partial_{x}(u_{\mathrm{T}}^{2}-u_{\mathrm{B}}^{2})](t,x)\omega_{\delta}(x)\,\dif x\nonumber \\
 & =\frac{1}{2}\int_{\mathbb{R}}(u_{\mathrm{T}}-u_{\mathrm{B}})(t,x)[\omega_{\delta}''(x)+(u_{T}+u_{\mathrm{B}})(t,x)\omega_{\delta}'(x)]\,\dif x\label{eq:IBP}
\end{align}
using integration by parts. The boundary terms at infinity vanish
due to the at most polynomial growth of~$u_{\mathrm{B}}(t,x)$ and $u_{\mathrm{T}}(t,x)$
as $|x|\to\infty$ (\cite[Theorem 1.2, property (P4)]{DGR19}) and
the superpolynomial decay of $\zeta_{\delta}$ at infinity. Now, as
in \cite[(3.12)–(3.13)]{DGR19}, there is a constant $C<\infty$
(independent of $\delta$) so that
\[
|\zeta_{\delta}''(x)|\le C\delta^{2}\zeta_{\delta}(x)\qquad\text{and}\qquad\p_{\ell}(x)|\zeta_{\delta}'(x)|\le C\delta^{1-\ell}\zeta_{\delta}(x)\qquad\text{for all \ensuremath{x\in\mathbb{R}}.}
\]
Therefore, we have
\[
|\omega_{\delta}''(x)|\le C\delta^{2}\omega_{\delta}(x)\qquad\text{and}\qquad\p_{\ell}(x)|\omega_{\delta}'(x)|\le C\delta^{1-\ell}\omega_{\delta}(x)\qquad\text{for all \ensuremath{x\ge0}},
\]
and moreover (making $C$ larger if necessary)
\[
|\omega_{\delta}''(x)|,\p_{\ell}(x)|\omega_{\delta}'(x)|\le C\qquad\text{for all }x\in[-1,0].
\]
Using these bounds in \eqref{IBP}, we have
\begin{align}
 & \left|\frac{\dif}{\dif t}\int_{\mathbb{R}}[u_{\mathrm{T}}-u_{\mathrm{B}}](t,x)\omega_{\delta}(x)\,\dif x\right|\nonumber \\
 & \qquad\le\frac{1}{2}\int_{0}^{\infty}(u_{\mathrm{T}}-u_{\mathrm{B}})(t,x)[|\omega_{\delta}''(x)|+2\|\mathbf{u}_{\mathrm{BT}}(t,\cdot)\|_{\mathcal{C}_{\p_{\ell}}}\p_{\ell}(x)|\omega_{\delta}'(x)|]\,\dif x\nonumber \\
 & \qquad\qquad+C\int_{-1}^{0}(u_{\mathrm{T}}-u_{\mathrm{B}})(t,x)[1+\|\mathbf{u}_{\mathrm{BT}}(t,\cdot)\|_{\mathcal{C}_{\p_{\ell}}}]\,\dif x\nonumber \\
 & \qquad\le C(\delta^{2}+\|\mathbf{u}_{\mathrm{BT}}(t,\cdot)\|_{\mathcal{C}_{\p_{\ell}}}\delta^{1-\ell})\int_{0}^{\infty}(u_{\mathrm{T}}-u_{\mathrm{B}})(t,x)\omega_{\delta}(x)\,\dif x+C\langle\|\mathbf{u}_{\mathrm{BT}}(t,\cdot)\|_{\mathcal{C}_{\p_{\ell}}}\rangle^{2},\label{eq:derivbound}
\end{align}
where we have allowed the constant $C$ to change from line to line.
Now by the well-posedness proved in \cite[Theorem 1.1]{DGR19}, for
any $T\ge0$ we have 
\begin{equation}
\sup_{t\in[0,T]}\|\mathbf{u}_{\mathrm{BT}}(t,\cdot)\|_{\mathcal{C}_{\p_{\ell}}}<\infty\label{eq:uBTbound}
\end{equation}
almost surely. By the assumption that $\mathbf{u}_{\mathrm{BT}}(0,\cdot)\in\mathcal{X}_{\mathrm{BT}}$,
we have
\begin{equation}
\lim_{\delta\downarrow0}\int_{\mathbb{R}}[u_{\mathrm{T}}-u_{\mathrm{B}}](0,x)\omega_{\delta}(x)\,\dif x\to\infty.\label{eq:ictoinfinity}
\end{equation}
Combining \eqref{derivbound}, \eqref{uBTbound}, \eqref{ictoinfinity},
and Grönwall's inequality, we see that
\[
\lim_{\delta\downarrow0}\int_{\mathbb{R}}[u_{\mathrm{T}}-u_{\mathrm{B}}](t,x)\omega_{\delta}(x)\,\dif x=\infty,
\]
which implies that 
\[
\lim_{x\to\infty}\int_{0}^{x}[u_{\mathrm{T}}-u_{\mathrm{B}}](t,y)\,\dif y=\infty.\qedhere
\]
\end{proof}

\section{The change of variables and the explicit shock profiles\label{sec:chgvar}}

In this section we describe the change of variables \eqref{chgvar-intro}
leading to the equation \eqref{dtU-intro}, and show how this leads
to the explcit shock profiles \eqref{shockprototype}.

\subsection{The change of variables}

To understand the change of variables \eqref{chgvar-intro}, the first
step is to understand the shock position $b_{t}$ in terms of the
solution to the KPZ equation \eqref{KPZ}. Given a triple $\mathbf{v}=(v_{\mathrm{B}},v_{\mathrm{T}},v)\in\mathcal{X}_{\mathrm{BT}}\times\mathcal{X}$,
let~$\mathbf{u}=(u_{\mathrm{B}},u_{\mathrm{T}},u)\in\mathcal{C}([0,\infty);\mathcal{X}^{3})$
solve \eqref{SBE-many} with initial condition $\mathbf{u}(0,\cdot)=\mathbf{v}$.
By \lemref{dynamicspreservelimits}, we have with probability $1$
that
\begin{equation}
\mathbf{u}(t,\cdot)\in\mathcal{X}_{\mathrm{BT}}\times\mathcal{X}\label{eq:utinXBT}
\end{equation}
for all $t\ge0$. In addition, for a given $b\in\mathbb{R}$, let
$\mathbf{h}^{[b]}=(h_{\mathrm{B}}^{[b]},h_{\mathrm{T}}^{[b]},h^{[b]})$
solve the KPZ equation \eqref{KPZ} with initial condition
\[
\mathbf{h}^{[b]}(0,x)=\int_{b}^{x}\mathbf{u}(0,y)\,\dif y.
\]
We emphasize that $\mathbf{h}^{[b]}(t,x)$ is \emph{not} equal to
$\int_{b}^{x}\mathbf{u}(t,y)\,\dif y$, even though
\begin{equation}
\partial_{x}\mathbf{h}^{[b]}(t,x)=\mathbf{u}(t,x)\qquad\text{for all \ensuremath{t\ge0} and \ensuremath{x\in\mathbb{R}}}.\label{eq:partialxh}
\end{equation}
Indeed, we have
\begin{equation}
\mathbf{h}^{[b]}(t,x)=\mathbf{h}^{[b]}(t,b)+\int_{b}^{x}\mathbf{u}(t,y)\,\dif y,\label{eq:hTBFTC}
\end{equation}
and $\mathbf{h}^{[b]}(t,b)$ is not zero in general. Now we define
\begin{equation}
\overline{Z}_{b}[\mathbf{v}](x)=\frac{1}{2}\int_{b}^{x}[v_{\mathrm{T}}-v_{\mathrm{B}}](y)\,\dif y=\frac{1}{2}[h_{\mathrm{T}}^{[b]}-h_{\mathrm{B}}^{[b]}](0,x),\label{eq:Zbardef}
\end{equation}
and
\begin{equation}
Z_{b,t}[\mathbf{v}](x)=\frac{1}{2}[h_{\mathrm{T}}^{[b]}-h_{\mathrm{B}}^{[b]}](t,x).\label{eq:Zbtdef}
\end{equation}
Later on, we will use the notation $\overline{Z}_{b}[\mathbf{v}]$
and $Z_{b,t}[\mathbf{v}]$ when $\mathbf{v}\in\mathcal{X}_{\mathrm{BT}}\times\mathcal{X}^{N}$
for some $N\ge0$; the extension is obvious because these quantities
only depend on the first two components of $\mathbf{v}$. Observe
that $Z_{b,0}[\mathbf{v}](x)=\overline{Z}_{b}[\mathbf{v}](x)$ and
that for any $b'\in\mathbb{R}$ we have using \eqref{hTBFTC} that
\begin{equation}
Z_{b,t}[\mathbf{v}](x)=Z_{b,t}[\mathbf{v}](b')+\frac{1}{2}\int_{b'}^{x}[u_{\mathrm{T}}-u_{\mathrm{B}}](t,y)\,\dif y.\label{eq:Zaddition}
\end{equation}
By \eqref{utinXBT} and \eqref{XBTdef}, $Z_{b,t}[\mathbf{v}](x)$
is an invertible function of $x$ for each fixed $b$ and $t$. For
the rest of this section we will fix $\mathbf{v}$ and $\mathbf{u}$
as above, and write $Z_{b,t}=Z_{b,t}[\mathbf{v}]$.
\begin{lem}
\label{lem:ZtsolvestheODE}
Fix $b,\zeta\in\mathbb{R}$, and for $t\ge0$,
define $b_{t}=Z_{b,t}^{-1}(\zeta)$. Then $(b_{t})_{t\ge0}$ is the
unique solution to the ordinary differential equation \eqref{bteqn}
with initial condition $b_{0}=\overline{Z}_{b}[\mathbf{v}]^{-1}(\zeta)$. 
\end{lem}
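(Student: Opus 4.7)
The plan is to observe that by the definition of $b_t$, we have the identity $Z_{b,t}(b_t)=\zeta$ for all $t\ge 0$, and then differentiate this in $t$ using the chain rule. This reduces everything to computing $\partial_t Z_{b,t}$ and $\partial_x Z_{b,t}$ and evaluating at $x=b_t$.

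For the spatial derivative, the definition \eqref{Zbtdef} combined with \eqref{partialxh} gives immediately $\partial_x Z_{b,t}(x) = \tfrac{1}{2}(u_\T-u_\B)(t,x)$, which is strictly positive on all of $\mathbb{R}$ by \eqref{utinXBT} and the definition \eqref{XBTdef} of $\cX_{\mathrm{BT}}$. In particular $Z_{b,t}$ is a smooth increasing bijection of $\mathbb{R}$, so $b_t$ is well-defined. For the time derivative, the key point is that $h_\T^{[b]}$ and $h_\B^{[b]}$ satisfy the KPZ equation \eqref{KPZ} driven by the \emph{same} noise $dV$, so the Itô differentials of the noise cancel in the difference and one obtains the purely deterministic (in $t$) equation
\begin{equation*}
\partial_t[h_\T^{[b]}-h_\B^{[b]}](t,x) = \tfrac{1}{2}\,\partial_x^2[h_\T^{[b]}-h_\B^{[b]}](t,x) - \tfrac{1}{2}(u_\T^2-u_\B^2)(t,x),
\end{equation*}
which after using \eqref{partialxh} rearranges as $\partial_t Z_{b,t}(x)=\tfrac{1}{4}\partial_x(u_\T-u_\B)(t,x)-\tfrac{1}{4}(u_\T-u_\B)(u_\T+u_\B)(t,x)$.

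Differentiating $Z_{b,t}(b_t)=\zeta$ in $t$ then yields
\begin{equation*}
\partial_t b_t = -\frac{\partial_t Z_{b,t}(b_t)}{\partial_x Z_{b,t}(b_t)} = -\frac{1}{2}\cdot\frac{\partial_x(u_\T-u_\B)}{u_\T-u_\B}(t,b_t)+\frac{1}{2}(u_\T+u_\B)(t,b_t),
\end{equation*}
which is exactly \eqref{bteqn}. The initial condition is immediate from $Z_{b,0}=\overline{Z}_b$, which holds by \eqref{Zbardef}--\eqref{Zbtdef}. Uniqueness is a consequence of the standard ODE theory once one notes that the right-hand side of \eqref{bteqn}, viewed as a function of the spatial variable along the (continuous in $t$) path $u_\B,u_\T$, is locally Lipschitz because $u_\T-u_\B$ is bounded away from $0$ on compact sets in $(t,x)$ by the strict ordering provided by \lemref{dynamicspreservelimits}.

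The only step that requires care is justifying the differentiation of the KPZ identity in the stochastic setting and the resulting cancellation of noise; this can be handled by working with mild solutions to \eqref{KPZ} and using that both components are Cole--Hopf transforms of the SHE driven by the same cylindrical Wiener process, so the quadratic variation of $h_\T^{[b]}-h_\B^{[b]}$ vanishes and this difference is (with probability one) a classical $\mathcal{C}^{1,2}$ solution of the displayed deterministic equation above. Once this regularity is in hand, the chain-rule computation is routine.
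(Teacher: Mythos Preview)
Your proposal is correct and follows essentially the same approach as the paper: both differentiate the identity $Z_{b,t}(b_t)=\zeta$ via the chain rule, compute $\partial_t Z_{b,t}$ from the KPZ equation (with the noise cancelling in the difference $h_{\mathrm{T}}^{[b]}-h_{\mathrm{B}}^{[b]}$), divide by $\partial_x Z_{b,t}=\tfrac12(u_{\mathrm{T}}-u_{\mathrm{B}})$, and invoke local Lipschitz continuity for uniqueness. You spell out the noise-cancellation and regularity justifications more explicitly than the paper, which simply writes down $\partial_t Z_{b,t}$ in terms of $u_{\mathrm{B}},u_{\mathrm{T}}$ without comment.
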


\begin{proof}
We compute
\[
0=\partial_{t}\left(Z_{b,t}(Z_{b,t}^{-1}(\zeta))\right)=(\partial_{t}Z_{b,t})(Z_{b,t}^{-1}(\zeta))+(\partial_{x}Z_{b,t})(Z_{b,t}^{-1}(\zeta))\cdot\partial_{t}(Z_{b,t}^{-1}(\zeta).
\]
The fact that $(u_{\mathrm{B}},u_{\mathrm{T}})(t,\cdot)\in\mathcal{X}_{\mathrm{BT}}$
means that $\partial_{x}Z_{b,t}(x)\ne0$ for all $x\in\mathbb{R}$,
so we can compute
\begin{align}
\partial_{t}(Z_{b,t}^{-1})(\zeta) & =-\left(\frac{\partial_{t}Z_{b,t}}{\partial_{x}Z_{b,t}}\right)(Z_{b,t}^{-1}(\zeta))=-\frac{1}{2}\left(\frac{\partial_{x}(u_{\mathrm{T}}-u_{\mathrm{B}})-u_{\mathrm{T}}^{2}+u_{\mathrm{B}}^{2}}{u_{\mathrm{T}}-u_{\mathrm{B}}}\right)(t,Z_{b,t}^{-1}(\zeta))\nonumber \\
 & =\frac{1}{2}\left(-\partial_{x}(\log(u_{\mathrm{T}}-u_{\mathrm{B}}))+u_{\mathrm{B}}+u_{\mathrm{T}}\right)(t,Z_{b,t}^{-1}(\zeta)),\label{eq:ourvectorfield}
\end{align}
so $t\mapsto Z_{b,t}^{-1}(\zeta)$ satisfies \eqref{bteqn} for any
fixed $b$ and $\zeta$. The vector field on the right side of \eqref{ourvectorfield}
is locally Lipschitz, so the uniqueness comes from the basic theory
of ordinary differential equations.
\end{proof}
Note that the initial condition $b_{0}$ does not determine $b$ and
$\zeta$ uniquely. However, if we fix some~$\zeta\in\mathbb{R}$,
then $b$ is determined uniquely by $b_{0}$. In particular, if $\zeta=0$
then $b=b_{0}$. Alternatively, if we fix $b$, which determines $h_{\mathrm{B}}^{[b]}$
and $h_{\mathrm{T}}^{[b]}$, then the choice of $b_{0}$ is equivalent
to the choice of $\zeta$, as
\[
\zeta=\frac{1}{2}[h_{\mathrm{T}}^{[b]}-h_{\mathrm{B}}^{[b]}](0,b_{0}),
\]
and then the solution $(b_{t})_{t\ge0}$ to \eqref{bteqn} with initial
condition $b_{0}$ is determined by the condition that
\[
\zeta=\frac{1}{2}[h_{\mathrm{T}}^{[b]}-h_{\mathrm{B}}^{[b]}](t,b_{t}).
\]
This gives a very simple geometric interpretation of $b_{t}$ in terms
of the graphs of $h_{\mathrm{B}}^{[b]}(t,\cdot)$ and $h_{\mathrm{T}}^{[b]}(t,\cdot)$.

With this notation introduced, we see that the change of variables
\eqref{chgvar-intro} becomes
\begin{equation}
\zeta=Z_{b_{0},t}(x),\qquad U=\frac{2u-u_{\mathrm{T}}-u_{\mathrm{B}}}{u_{\mathrm{T}}-u_{\mathrm{B}}}.\label{eq:chgvar}
\end{equation}
The inverse change of variables is
\begin{equation}
x=Z_{b_{0},t}^{-1}(\zeta),\qquad u=\frac{1}{2}[(u_{\mathrm{T}}-u_{\mathrm{B}})U+u_{\mathrm{T}}+u_{\mathrm{B}}].\label{eq:inversechgvar}
\end{equation}
A convenient way to carry out this change of variables is to first
define the corresponding KPZ object
\begin{equation}
Q^{[b_{0}]}(t,\zeta)=\left(h^{[b_{0}]}-\frac{1}{2}h_{\mathrm{T}}^{[b_{0}]}-\frac{1}{2}h_{\mathrm{B}}^{[b_{0}]}\right)(t,Z_{b_{0},t}^{-1}(\zeta)),\label{eq:Qbdef}
\end{equation}
and then put
\begin{equation}
U^{[b_{0}]}(t,\zeta)=\partial_{\zeta}Q^{[b_{0}]}(t,\zeta)=\left(\frac{2u-u_{\mathrm{T}}-u_{\mathrm{B}}}{u_{\mathrm{T}}-u_{\mathrm{B}}}\right)(t,Z_{b_{0},t}^{-1}(\zeta)).\label{eq:Ubdef}
\end{equation}
In \eqref{Ubdef} we used the fact that
\begin{equation}
\partial_{\zeta}Z_{b_{0},t}^{-1}(\zeta)=\frac{1}{(\partial_{x}Z_{b_{0},t})(Z_{b_{0},t}^{-1}(\zeta))}=\frac{2}{(u_{\mathrm{T}}-u_{\mathrm{B}})(t,Z_{b_{0},t}^{-1}(\zeta))}\label{eq:dtZinverse}
\end{equation}
by \eqref{partialxh}.

Having carried out the change of variables, we now show that $U^{[b_{0}]}$
solves the PDE \eqref{dtU-intro}.
\begin{prop}
We have
\begin{equation}
\partial_{t}U^{[b_{0}]}(t,\zeta)=\frac{1}{8}\partial_{\zeta}\left(J^{[b_{0}]}(\zeta)\left(\partial_{\zeta}U^{[b_{0}]}(t,\zeta)-(U^{[b_{0}]}(t,\zeta))^{2}+1\right)\right),\label{eq:dtU}
\end{equation}
where
\[
J^{[b_{0}]}(\zeta)=(u_{\mathrm{T}}-u_{\mathrm{B}})^{2}(t,Z_{b_{0},t}^{-1}(\zeta)).
\]
\end{prop}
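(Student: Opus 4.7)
The plan is to verify the identity by computing $\partial_t Q^{[b_0]}$ directly and then differentiating in $\zeta$. Since $U^{[b_0]} = \partial_\zeta Q^{[b_0]}$ and $(t,\zeta)$ are independent variables, it suffices to show that
\[
\partial_t Q^{[b_0]}(t,\zeta) = \tfrac{1}{8}J^{[b_0]}(\zeta)\bigl(\partial_\zeta U^{[b_0]}(t,\zeta) - (U^{[b_0]}(t,\zeta))^2 + 1\bigr).
\]
Working at the KPZ level (where $Q^{[b_0]}$ naturally lives) is convenient because \eqref{KPZ} is scalar and semilinear and thus behaves well under the nonlinear change of variables $\zeta = Z_{b_0,t}(x)$. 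The crucial structural input is that both the noise $\dif V$ and the constant drift $\tfrac12\|\rho\|_{L^2}^2\dif t$ cancel in the combination $\tilde h := h^{[b_0]} - \tfrac12 h_T^{[b_0]} - \tfrac12 h_B^{[b_0]}$, since the coefficients $(1,-\tfrac12,-\tfrac12)$ sum to zero. So $\tilde h$ satisfies the deterministic identity
\[
\partial_t \tilde h = \tfrac12\partial_x^2 \tilde h - \tfrac12\bigl(u^2 - \tfrac12 u_T^2 - \tfrac12 u_B^2\bigr).
\]

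Next I would apply the chain rule to $Q^{[b_0]}(t,\zeta) = \tilde h(t, Z_{b_0,t}^{-1}(\zeta))$, using the formula $\partial_t Z_{b_0,t}^{-1}(\zeta) = \tfrac12(-\partial_x\log(u_T-u_B) + u_B + u_T)$ established in \lemref{ZtsolvestheODE}. Introducing $s := \tfrac12(u_T+u_B)$ and $d := \tfrac12(u_T - u_B)$, so that $u = s + dU$ at $x_* := Z_{b_0,t}^{-1}(\zeta)$, a quick expansion gives
\[
u^2 - \tfrac12 u_T^2 - \tfrac12 u_B^2 = 2sdU + d^2(U^2 - 1), \qquad \partial_x \tilde h = u - s = dU,
\]
and $\partial_t Z_{b_0,t}^{-1}(\zeta) = s - \tfrac12\partial_x\log d$ at $x_*$. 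The two $sdU$ terms then cancel in $\partial_t Q^{[b_0]} = \partial_t \tilde h + \partial_x \tilde h\cdot \partial_t Z_{b_0,t}^{-1}$, leaving
\[
\partial_t Q^{[b_0]} = \tfrac12\partial_x^2 \tilde h - \tfrac12 d^2(U^2 - 1) - \tfrac12 U\,\partial_x d,
\]
all evaluated at $x_*$.

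Finally, differentiating $\partial_x \tilde h = dU$ in $x$, and using the chain rule $\partial_x U \big|_{x_*} = d\cdot\partial_\zeta U(t,\zeta)$ (since $\partial_x Z_{b_0,t} = d$), gives $\partial_x^2 \tilde h = (\partial_x d)U + d^2\partial_\zeta U$. A second cancellation eliminates the $U\,\partial_x d$ terms and produces the clean identity
\[
\partial_t Q^{[b_0]}(t,\zeta) = \tfrac{d^2}{2}\bigl(\partial_\zeta U^{[b_0]} - (U^{[b_0]})^2 + 1\bigr) = \tfrac{1}{8}J^{[b_0]}(\zeta)\bigl(\partial_\zeta U^{[b_0]} - (U^{[b_0]})^2 + 1\bigr),
\]
using $J^{[b_0]} = (2d)^2 = 4d^2$. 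Applying $\partial_\zeta$ to both sides then yields \eqref{dtU}.

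The proof is algebraic in nature once the noise cancellation has been extracted, so the main thing to execute carefully is the bookkeeping between the $x$-derivatives of $\tilde h, d, s$ and the $\zeta$-derivatives of $U^{[b_0]}$ under the change of variables. The structural point worth emphasizing is that two separate cancellations occur—the $sdU$ cancellation from the transport contribution and the $U\,\partial_x d$ cancellation from the second $x$-derivative—and it is precisely the vector field in \eqref{ourvectorfield}, i.e., the definition of the shock location $b_t$, that is tuned to make both cancellations take place. That is the computational origin of the stationarity of $U \equiv -\tanh\zeta$ and of the shock profile formula \eqref{shockprototype}.
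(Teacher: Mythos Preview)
Your proposal is correct and follows essentially the same approach as the paper: compute $\partial_t Q^{[b_0]}$ via the chain rule using the formula for $\partial_t Z_{b_0,t}^{-1}$ from \lemref{ZtsolvestheODE}, simplify to the form $\tfrac18 J^{[b_0]}(\partial_\zeta U - U^2 + 1)$, and differentiate in $\zeta$. Your introduction of $s=\tfrac12(u_T+u_B)$ and $d=\tfrac12(u_T-u_B)$ organizes the algebra more transparently than the paper's direct manipulation and makes the two cancellations explicit, but the argument is the same.
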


\begin{proof}
We start by computing a PDE for $Q^{[b_{0}]}$. Using \eqref{partialxh}
and \eqref{ourvectorfield}, we can differentiate \eqref{Qbdef} to
obtain
\begin{equation}
\begin{aligned}\partial_{t}Q^{[b_{0}]}(t,\zeta) & =\frac{1}{2}\left(\partial_{x}\left(u-\frac{1}{2}u_{\mathrm{T}}-\frac{1}{2}u_{\mathrm{B}}\right)-u^{2}+\frac{1}{2}u_{\mathrm{T}}^{2}+\frac{1}{2}u_{\mathrm{B}}^{2}\right)(t,Z_{b_{0},t}^{-1}(\zeta))\\
 & \qquad-\frac{1}{2}\left(\left(u-\frac{1}{2}u_{\mathrm{T}}-\frac{1}{2}u_{\mathrm{B}}\right)\cdot\left((\partial_{x}(\log(u_{\mathrm{T}}-u_{\mathrm{B}}))-(u_{\mathrm{B}}+u_{\mathrm{T}})\right)\right)(t,Z_{b_{0},t}^{-1}(\zeta)).
\end{aligned}
\label{eq:dtQ}
\end{equation}
On the other hand, we can differentiate the second equality in \eqref{Ubdef}
(using \eqref{dtZinverse} again) to get
\begin{align}
\partial_{\zeta}^{2} & Q^{[b_{0}]}(t,\zeta)\nonumber \\
 & =\frac{4}{(u_{\mathrm{T}}-u_{\mathrm{B}})^{2}}\left(\partial_{x}\left(u-\frac{1}{2}u_{\mathrm{T}}-\frac{1}{2}u_{\mathrm{B}}\right)-\left(u-\frac{1}{2}u_{\mathrm{T}}-\frac{1}{2}u_{\mathrm{B}}\right)\partial_{x}(\log(u_{\mathrm{T}}-u_{\mathrm{B}}))\right)(t,Z_{b_{0},t}^{-1}(\zeta)).\label{eq:dzeta2Q}
\end{align}
Recognizing the two terms in brackets in \eqref{dzeta2Q} in \eqref{dtQ},
we see that
\begin{align}
\partial_{t}Q^{[b_{0}]}(t,\zeta) & =\frac{1}{8}(u_{\mathrm{T}}-u_{\mathrm{B}})^{2}(t,Z_{b_{0},t}^{-1}(\zeta))\cdot\partial_{\zeta}^{2}Q^{[b_{0}]}(t,\zeta)\nonumber \\
 & \qquad+\frac{1}{2}\left(\frac{1}{2}u_{\mathrm{T}}^{2}+\frac{1}{2}u_{\mathrm{B}}^{2}-u^{2}+\left(u-\frac{1}{2}u_{\mathrm{T}}-\frac{1}{2}u_{\mathrm{B}}\right)(u_{\mathrm{B}}+u_{\mathrm{T}})\right)(t,Z_{b_{0},t}^{-1}(\zeta))\nonumber \\
 & =\frac{1}{8}(u_{\mathrm{T}}-u_{\mathrm{B}})^{2}(t,Z_{b_{0},t}^{-1}(\zeta))\cdot\partial_{\zeta}^{2}Q^{[b_{0}]}(t,\zeta)+\frac{1}{2}((u_{\mathrm{T}}-u)(u-u_{\mathrm{B}}))(t,Z_{b_{0},t}^{-1}(\zeta))\nonumber \\
 & =\frac{1}{8}(u_{\mathrm{T}}-u_{\mathrm{B}})^{2}(t,Z_{b_{0},t}^{-1}(\zeta))\cdot\left[\partial_{\zeta}^{2}Q^{[b_{0}]}(t,\zeta)-\left(\partial_{\zeta}Q^{[b_{0}]}(t,\zeta)\right)^{2}+1\right].\label{eq:dtQeqn}
\end{align}
Differentiating \eqref{dtQeqn} in $\zeta$ and recalling \eqref{Ubdef},
we get
\[
\partial_{t}U^{[b_{0}]}(t,\zeta)=\frac{1}{8}\partial_{\zeta}\left((u_{\mathrm{T}}-u_{\mathrm{B}})^{2}(t,Z_{b_{0},t}^{-1}(\zeta))\cdot\left[\partial_{\zeta}U^{[b_{0}]}(t,\zeta)-\left(U^{[b_{0}]}(t,\zeta)\right)^{2}+1\right]\right),
\]
which is \eqref{dtU}.
\end{proof}

\subsection{The shock profiles}

We now describe the explicit shock profiles introduced in \eqref{shockprototype}.
It is clear from \eqref{dtU} that, for any~$\gamma\in\mathbb{R}$,
the deterministic profile
\[
U_{\gamma}(t,\zeta)=-\tanh(\zeta-\gamma/2)
\]
is a solution to \eqref{dtU}. Applying the change of variables \eqref{inversechgvar},
we see that if we define
\begin{align}
u^{[b_{0},\gamma]}(t,x) & =\frac{1}{2}[-(u_{\mathrm{T}}-u_{\mathrm{B}})(t,x)\tanh(Z_{b_{0},t}(x)-\gamma/2)+(u_{\mathrm{B}}+u_{\mathrm{T}})(t,x)]\nonumber \\
 & =\frac{\e^{Z_{b_{0},t}(x)-\gamma/2}}{\e^{Z_{b_{0},t}(x)-\gamma/2}+\e^{-Z_{b_{0},t}(x)+\gamma/2}}u_{\mathrm{B}}(t,x)+\frac{\e^{-Z_{b_{0},t}(x)+\gamma/2}}{\e^{Z_{b_{0},t}(x)-\gamma/2}+\e^{-Z_{b_{0},t}(x)+\gamma/2}}u_{\mathrm{T}}(t,x)\nonumber \\
 & =\frac{1}{1+\e^{\gamma-2Z_{b_{0},t}(x)}}u_{\mathrm{B}}(t,x)+\frac{1}{1+\e^{2Z_{b_{0},t}(x)-\gamma}}u_{\mathrm{T}}(t,x),\label{eq:ub0gamma}
\end{align}
then $(u_{\mathrm{B}},u_{\mathrm{T}},u^{[b_{0},\gamma]})$ solves
\eqref{SBE-many}.

We note (recalling the definitions \eqref{Sdef} and \eqref{Zbardef})
that
\begin{equation}
\mathscr{S}_{b,\gamma}[v_{\mathrm{B}},v_{\mathrm{T}}](x)=\frac{v_{\mathrm{B}}(x)}{1+\e^{\gamma-2\overline{Z}_{b}[v_{\mathrm{B}},v_{\mathrm{T}}](x)}}+\frac{v_{\mathrm{T}}(x)}{1+\e^{2\overline{Z}_{b}[v_{\mathrm{B}},v_{\mathrm{T}}](x)-\gamma}}.\label{eq:Sbgammarewrite}
\end{equation}
Using \eqref{Zaddition} with $b=b_{0}$ and $b'=b_{t}$, and noting
by \lemref{ZtsolvestheODE} (with $\zeta=0$) that $Z_{b_{0},t}(b_{t})=0$,
we have
\begin{align*}
Z_{b_{0},t}(x) & =Z_{b_{0},t}(b_{t})+\frac{1}{2}\int_{b_{t}}^{x}[u_{\mathrm{T}}-u_{\mathrm{B}}](t,y)\,\dif y\\
 & =\frac{1}{2}\int_{b_{t}}^{x}[u_{\mathrm{T}}-u_{\mathrm{B}}](t,y)\,\dif y=\overline{Z}_{b_{t}}[(u_{\mathrm{B}},u_{\mathrm{T}})(t,\cdot)](x).
\end{align*}
Substituting this into \eqref{ub0gamma} and using \eqref{Sbgammarewrite},
we get
\begin{equation}
u^{[b_{0},\gamma]}(t,x)=\frac{u_{\mathrm{B}}(t,x)}{1+\e^{\gamma-2\overline{Z}_{b_{t}}[(u_{\mathrm{B}},u_{\mathrm{T}})(t,\cdot)](x)}}+\frac{u_{\mathrm{T}}(t,x)}{1+\e^{2\overline{Z}_{b_{t}}[(u_{\mathrm{B}},u_{\mathrm{T}})(t,\cdot)](x)-\gamma}}=\mathscr{S}_{b_{t},\gamma}[(u_{\mathrm{B}},u_{\mathrm{T}})(t,\cdot)](x).\label{eq:uisS}
\end{equation}

Let us record the $L^{1}(\mathbb{R})$ distances between two of these
explicit shock profiles.
\begin{prop}
\label{prop:L1explicit}If $\mathbf{v}_{\mathrm{BT}}=(v_{\mathrm{B}},v_{\mathrm{T}})\in\mathcal{X}_{\mathrm{BT}}$,
then
\begin{equation}
\|\mathscr{S}_{b_{0},\gamma}[\mathbf{v}_{\mathrm{BT}}]-\mathscr{S}_{b_{0},\gamma'}[\mathbf{v}_{\mathrm{BT}}]\|_{L^{1}(\mathbb{R})}=\left|\int_{\mathbb{R}}\left(\mathscr{S}_{b_{0},\gamma}[\mathbf{v}_{\mathrm{BT}}]-\mathscr{S}_{b_{0},\gamma'}[\mathbf{v}_{\mathrm{BT}}]\right)\right|=|\gamma-\gamma'|.\label{eq:Sdiffint}
\end{equation}
\end{prop}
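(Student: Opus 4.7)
The plan is to reduce both equalities in \eqref{Sdiffint} to elementary calculus using the Cole--Hopf representation of $\mathscr{S}_{b_0,\gamma}$. Introduce the positive smooth functions
\[
\phi_{\mathrm{B}}(x) = \exp\left\{-\int_{b_0}^{x} v_{\mathrm{B}}(y)\,\dif y\right\}, \qquad \phi_{\mathrm{T}}(x) = \exp\left\{-\int_{b_0}^{x} v_{\mathrm{T}}(y)\,\dif y\right\}.
\]
A straightforward computation (essentially the Cole--Hopf identity already noted in the introduction) shows that
\[
\mathscr{S}_{b_0,\gamma}[v_{\mathrm{B}},v_{\mathrm{T}}](x) \;=\; -\partial_{x}\log\bigl(\phi_{\mathrm{B}}(x) + \e^{\gamma}\phi_{\mathrm{T}}(x)\bigr),
\]
since the right-hand side, after multiplying numerator and denominator by $\phi_{\mathrm{B}}^{-1}$, reproduces the definition \eqref{Sdef} via the identity $\phi_{\mathrm{T}}/\phi_{\mathrm{B}} = \exp(-2\overline{Z}_{b_0}[\mathbf{v}_{\mathrm{BT}}])$.

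For the integral equality, subtract the Cole--Hopf representations to obtain
\[
\mathscr{S}_{b_0,\gamma}[\mathbf{v}_{\mathrm{BT}}] - \mathscr{S}_{b_0,\gamma'}[\mathbf{v}_{\mathrm{BT}}] \;=\; -\partial_{x}\log\!\left(\frac{1+\e^{\gamma}\phi_{\mathrm{T}}/\phi_{\mathrm{B}}}{1+\e^{\gamma'}\phi_{\mathrm{T}}/\phi_{\mathrm{B}}}\right).
\]
The hypothesis $\mathbf{v}_{\mathrm{BT}} \in \mathcal{X}_{\mathrm{BT}}$ forces $\overline{Z}_{b_0}[\mathbf{v}_{\mathrm{BT}}](x)\to\pm\infty$ as $x\to\pm\infty$, so $\phi_{\mathrm{T}}/\phi_{\mathrm{B}}\to 0$ at $+\infty$ and $\to\infty$ at $-\infty$. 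Hence the bracketed ratio tends to $1$ as $x\to+\infty$ and to $\e^{\gamma-\gamma'}$ as $x\to-\infty$. Applying the fundamental theorem of calculus to this total derivative on $[-A,A]$ and passing to $A\to\infty$ yields
\[
\int_{\mathbb{R}}\bigl(\mathscr{S}_{b_0,\gamma}[\mathbf{v}_{\mathrm{BT}}] - \mathscr{S}_{b_0,\gamma'}[\mathbf{v}_{\mathrm{BT}}]\bigr)\,\dif x \;=\; \gamma - \gamma'.
\]

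For the $L^{1}$ equality, I will show that the integrand has constant sign, after which the two quantities coincide up to the absolute value. Differentiating \eqref{Sdef} in $\gamma$ and simplifying with the identity $\e^{\theta}/(1+\e^{\theta})^{2} = 1/(4\cosh^{2}(\theta/2))$ gives
\[
\partial_{\gamma}\mathscr{S}_{b_0,\gamma}[\mathbf{v}_{\mathrm{BT}}](x) \;=\; \frac{v_{\mathrm{T}}(x) - v_{\mathrm{B}}(x)}{4\cosh^{2}\!\bigl(\overline{Z}_{b_0}[\mathbf{v}_{\mathrm{BT}}](x) - \gamma/2\bigr)},
\]
which is strictly positive because $v_{\mathrm{T}} > v_{\mathrm{B}}$ pointwise. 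Thus $\gamma\mapsto\mathscr{S}_{b_0,\gamma}[\mathbf{v}_{\mathrm{BT}}](x)$ is monotone increasing at every $x$, the difference $\mathscr{S}_{b_0,\gamma}-\mathscr{S}_{b_0,\gamma'}$ has constant sign on $\mathbb{R}$, and its $L^{1}$ norm equals $|\gamma-\gamma'|$. Integrability is automatic from this monotonicity together with the finiteness of the signed integral already computed.

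There is no real obstacle; the main subtlety is simply to verify that $\phi_{\mathrm{B}} + \e^{\gamma}\phi_{\mathrm{T}} > 0$ (immediate from positivity of $\phi_{\mathrm{B}},\phi_{\mathrm{T}}$) and that the $\mathcal{X}_{\mathrm{BT}}$ condition is exactly what is needed to pin down the limits of the ratio $\phi_{\mathrm{T}}/\phi_{\mathrm{B}}$ at $\pm\infty$.
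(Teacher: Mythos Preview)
Your argument is correct. Both equalities follow as you claim: the Cole--Hopf representation makes the difference a total derivative whose boundary values are pinned down by the $\mathcal{X}_{\mathrm{BT}}$ hypothesis, and the monotonicity in $\gamma$ gives the ordering needed for the first equality.

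The paper takes a slightly different route. For the ordering it simply observes from the definition \eqref{Sdef} that $\mathscr{S}_{b_0,\gamma}$ is a convex combination of $v_{\mathrm{B}}$ and $v_{\mathrm{T}}$ with weights monotone in $\gamma$; you make this explicit by differentiating. For the integral, instead of recognizing a total derivative and invoking the fundamental theorem of calculus, the paper applies the change of variables $\zeta=\overline{Z}_{b_0}[\mathbf{v}_{\mathrm{BT}}](x)$ from \eqref{chgvar-intro} to transform the integral into $\int_{\mathbb{R}}\bigl|\tanh(\zeta-\gamma'/2)-\tanh(\zeta-\gamma/2)\bigr|\,\dif\zeta$, which is then evaluated directly. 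Your Cole--Hopf approach and the paper's change-of-variables approach are really the same computation in different guises (your $\phi_{\mathrm{T}}/\phi_{\mathrm{B}}=\e^{-2\overline{Z}_{b_0}}$ is exactly the change of variables), but yours has the minor advantage of bypassing the explicit $\tanh$ integral by reading off boundary values.
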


\begin{proof}
It is clear from the definition \eqref{Sdef} that $\mathscr{S}_{b_{0},\gamma}[\mathbf{v}_{\mathrm{BT}}]$
and $\mathscr{S}_{b_{0},\gamma'}[\mathbf{v}_{\mathrm{BT}}]$ are ordered,
hence the first equality. For the second equality, we note that the
change of variables \eqref{chgvar} (with $t=0$) can be written as
\[
U(t,\zeta)=u(t,\overline{Z}_{b}[\mathbf{v}_{\mathrm{BT}}]^{-1}(\zeta))\partial_{\zeta}\overline{Z}_{b}[\mathbf{v}_{\mathrm{BT}}]^{-1}(\zeta)-\frac{u_{\mathrm{B}}+u_{\mathrm{T}}}{u_{\mathrm{T}}-u_{\mathrm{B}}}(t,\overline{Z}_{b}[\mathbf{v}_{\mathrm{BT}}]^{-1}(\zeta)),
\]
hence the integral in \eqref{Sdiffint} becomes
\[
\int_{\mathbb{R}}|-\tanh(\zeta-\gamma/2)-(-\tanh(\zeta-\gamma'/2))|\,\dif\zeta=\gamma-\gamma'.\qedhere
\]
\end{proof}
\begin{prop}
\label{prop:Scts}The map $\mathbb{R}^{2}\times\mathcal{X}_{\mathrm{BT}}\ni((b,\gamma),(v_{\mathrm{B}},v_{\mathrm{T}}))\mapsto\mathscr{S}_{b,\gamma}[v_{\mathrm{B}},v_{\mathrm{T}}]\in\mathcal{X}$
is continuous.
\end{prop}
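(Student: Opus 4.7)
My plan is to verify continuity by a standard compact-set plus tail decomposition, leveraging the very simple structure of $\mathscr{S}_{b,\gamma}$ as a convex combination. The topology on $\mathcal{X}$ is generated by the seminorms $\|\cdot\|_{\mathcal{C}_{\p_\ell}}$ for $\ell>1/2$, so I will take a convergent sequence $(b_n,\gamma_n,v_{\mathrm{B},n},v_{\mathrm{T},n})\to(b,\gamma,v_{\mathrm{B}},v_{\mathrm{T}})$ in $\mathbb{R}^2\times\mathcal{X}_{\mathrm{BT}}$, fix any $\ell>1/2$, and show that $\mathscr{S}_{b_n,\gamma_n}[v_{\mathrm{B},n},v_{\mathrm{T},n}]\to\mathscr{S}_{b,\gamma}[v_{\mathrm{B}},v_{\mathrm{T}}]$ in $\mathcal{C}_{\p_\ell}$.

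The key a priori bound is the trivial estimate
\[
|\mathscr{S}_{b,\gamma}[v_{\mathrm{B}},v_{\mathrm{T}}](x)|\le |v_{\mathrm{B}}(x)|+|v_{\mathrm{T}}(x)|,
\]
which follows immediately from the definition \eqref{Sdef} since each of the two weights is bounded by $1$. This already shows that $\mathscr{S}_{b,\gamma}[v_{\mathrm{B}},v_{\mathrm{T}}]\in\mathcal{X}$, so the target space is correct.

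For the continuity itself, I will split the supremum defining $\|\cdot\|_{\mathcal{C}_{\p_\ell}}$ into $\{|x|\le L\}$ and $\{|x|>L\}$ for a large $L$ to be chosen. On the tail, I pick some $\ell'\in(1/2,\ell)$ and combine the a priori bound with
\[
\frac{|v(x)|}{\langle x\rangle^{\ell}}=\frac{|v(x)|}{\langle x\rangle^{\ell'}}\cdot\langle x\rangle^{\ell'-\ell}\le\|v\|_{\mathcal{C}_{\p_{\ell'}}}\langle L\rangle^{\ell'-\ell}\quad\text{for }|x|>L.
\]
Since $v_{\mathrm{B},n}\to v_{\mathrm{B}}$ and $v_{\mathrm{T},n}\to v_{\mathrm{T}}$ in $\mathcal{X}$, the seminorms $\|\cdot\|_{\mathcal{C}_{\p_{\ell'}}}$ are uniformly bounded along the sequence, so the tail contribution is $O(\langle L\rangle^{\ell'-\ell})$, which I make smaller than $\eps/2$ by choosing $L$ large.

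On $[-L,L]$, convergence in $\mathcal{X}$ implies locally uniform convergence of $v_{\mathrm{B},n}\to v_{\mathrm{B}}$ and $v_{\mathrm{T},n}\to v_{\mathrm{T}}$, which also yields uniform convergence of $\int_{b_n}^{x}[v_{\mathrm{T},n}-v_{\mathrm{B},n}]$ to $\int_{b}^{x}[v_{\mathrm{T}}-v_{\mathrm{B}}]$ on $[-L,L]$ (using that $b_n\to b$ lies in a bounded set and that the integrand is locally uniformly bounded). Since $\mathscr{S}_{b,\gamma}[v_{\mathrm{B}},v_{\mathrm{T}}](x)$ is a continuous function of the five quantities $(\gamma,v_{\mathrm{B}}(x),v_{\mathrm{T}}(x),\int_{b}^{x}[v_{\mathrm{T}}-v_{\mathrm{B}}])$ that enter it, this gives uniform convergence on $[-L,L]$, which bounds the compact part by $\eps/2$ for $n$ large. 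Adding the two bounds completes the proof. The main obstacle is essentially bookkeeping: I expect no real difficulty beyond being careful about the order of quantifiers (fixing $\ell$, then $\ell'$, then $L$, then $n$).
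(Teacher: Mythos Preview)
Your proof is correct and follows essentially the same approach as the paper: both establish locally uniform convergence, combine it with the uniform bound $\|\mathscr{S}_{b,\gamma}[v_{\mathrm{B}},v_{\mathrm{T}}]\|_{\mathcal{C}_{\p_\ell}}\le\max\{\|v_{\mathrm{B}}\|_{\mathcal{C}_{\p_\ell}},\|v_{\mathrm{T}}\|_{\mathcal{C}_{\p_\ell}}\}$ coming from the convex-combination structure, and then use the interpolation $\p_{\ell'}/\p_\ell\to 0$ at infinity to upgrade to $\mathcal{C}_{\p_\ell}$-convergence. The paper compresses this interpolation into a single sentence, whereas you spell out the $\eps/2$ tail estimate explicitly, but the underlying argument is identical.
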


\begin{proof}
Suppose that $(b^{(n)},\gamma^{(n)},v_{\mathrm{B}}^{(n)},v_{\mathrm{T}}^{(n)})\to(b,\gamma,v_{\mathrm{B}},v_{\mathrm{T}})$
in $\mathbb{R}^{2}\times\mathcal{X}_{\mathrm{BT}}$. It is clear that
  \[\mathscr{S}_{b^{(n)},\gamma^{(n)}}[v_{\mathrm{B}}^{(n)},v_{\mathrm{T}}^{(n)}]\to\mathscr{S}_{b,\gamma}[v_{\mathrm{B}},v_{\mathrm{T}}]\]
uniformly on compact subsets of $\mathbb{R}$. We note that, for each
$\ell>1/2$, we have
\[
\|\mathscr{S}_{b^{(n)},\gamma^{(n)}}[v_{\mathrm{B}}^{(n)},v_{\mathrm{T}}^{(n)}]\|_{\mathcal{C}_{\p_{\ell}}}\le\max\{\|v_{\mathrm{B}}^{(n)}\|_{\mathcal{C}_{\p_{\ell}}},\|v_{\mathrm{T}}^{(n)}\|_{\mathcal{C}_{\p_{\ell}}}\},
\]
so $(\mathscr{S}_{b^{(n)},\gamma^{(n)}}[v_{\mathrm{B}}^{(n)},v_{\mathrm{T}}^{(n)}])_{n}$
is bounded in each $\mathcal{C}_{\p_{\ell}}$, $\ell>1/2$. Therefore,
\[
\mathscr{S}_{b^{(n)},\gamma^{(n)}}[v_{\mathrm{B}}^{(n)},v_{\mathrm{T}}^{(n)}]\to\mathscr{S}_{b,\gamma}[v_{\mathrm{B}},v_{\mathrm{T}}]
\]
in the topology of each $\mathcal{C}_{\p_{\ell}}$, $\ell>1/2$, and
hence in the topology of $\mathcal{X}$.
\end{proof}

\section{Bottom and top solutions in the shock location reference frame\label{sec:BTsolns}}

In this section we consider what happens when we look at the bottom
and top solutions $u_{\mathrm{B}}$ and $u_{\mathrm{T}}$ in the reference
frame of the shock location $b_{t}$. We first compute the translation
formula
\[
\tau_{y}\mathscr{S}_{b,\gamma}[v_{\mathrm{B}},v_{\mathrm{T}}]=\mathscr{S}_{b+y,\gamma}[\tau_{y}(v_{\mathrm{B}},v_{\mathrm{T}})],
\]
which is easily checked from the definition \eqref{Sdef}. Therefore,
we can translate \eqref{uisS} in space to see that (with notation
as in that expression)
\begin{align*}
\tau_{b_{0}-b_{t}}(u_{\mathrm{B}},u_{\mathrm{T}},u^{[b_{0},\gamma]})(t,\cdot) & =(\tau_{b_{0}-b_{t}}(u_{\mathrm{B}},u_{\mathrm{T}})(t,\cdot),\mathscr{S}_{b_{0},\gamma}[\tau_{b_{0}-b_{t}}(u_{\mathrm{B}},u_{\mathrm{T}})(t,\cdot)]).
\end{align*}
We note that the right side depends only on $b_{0},\gamma$, and $\tau_{b_{0}-b_{t}}(u_{\mathrm{B}},u_{\mathrm{T}})(t,\cdot)$.
In other words, the shock~$u^{[b_{0},\gamma]}$ is a deterministic
and time-independent functional of the top and bottom solutions in
the reference frame of the shock location. Thus, in this section we
study just the translated top and botom solutions, i.e. $\tau_{b_{0}-b_{t}}(u_{\mathrm{B}},u_{\mathrm{T}})(t,\cdot)$.
The main results of this section concern the invariant measure in
this reference frame and its stability.

First we must define the evolution semigroup in the reference frame
of the shock. Given an initial condition $\mathbf{v}=(\mathbf{v}_{\mathrm{BT}},\tilde{\mathbf{v}})\in\mathcal{X}_{\mathrm{BT}}\times\mathcal{X}^{N}$,
with some $N\ge0$, let
\[
\mathbf{u}=(\mathbf{u}_{\mathrm{BT}},\tilde{\mathbf{u}})=\Psi(\mathbf{v})\in\mathcal{C}([0,\infty);\mathcal{X}_{\mathrm{BT}}\times\mathcal{X}^{N})
\]
solve \eqref{SBE-many} with $\mathbf{u}(0,\cdot)=\mathbf{v}$. As
in \cite{DGR19}, we define, for $F\in\mathcal{C}_{\mathrm{b}}(\mathcal{X}_{\mathrm{BT}}\times\mathcal{X}^{N})$
(a bounded continuous function on $\mathcal{X}_{\mathrm{BT}}\times\mathcal{X}^{N}$),
\[
P_{t}F(\mathbf{v})=\mathbb{E}F(\mathbf{u}(t,\cdot)),
\]
so that $\{P_{t}\}_{t\ge0}$ is the Markov semigroup for the dynamics
\eqref{SBE-many} in the original reference frame. Next let $\{b_{t}\}_{t\ge0}$
solve \eqref{bteqn} with initial condition $b_{0}=b$, set 
\[
\Phi^{[b]}(\mathbf{v})(t,\cdot)=\tau_{b-b_{t}}\mathbf{u}(t,\cdot)\in\mathcal{C}([0,\infty);\mathcal{X}_{\mathrm{BT}}\times\mathcal{X}^{N}),
\]
and put, again for $F\in\mathcal{C}_{\mathrm{b}}(\mathcal{X}_{\mathrm{BT}}\times\mathcal{X}^{N})$,
\[
\hat{P}_{t}^{[b]}F(\mathbf{v})=\mathbb{E}F(\Phi^{[b]}(\mathbf{v})(t,\cdot)).
\]
It is easily checked that $\{\hat{P}_{t}^{[b]}\}_{t\ge0}$ has the
semigroup property. It is the evolution semigroup in the reference
frame of the shock. Moreover, $\{\hat{P}_{t}^{[b]}\}_{t\ge0}$ has
the Feller property (which was checked for~$\{P_{t}\}_{t\ge0}$ in
\cite[Theorem 1.1]{DGR19}). We endow the space $\mathcal{C}([0,\infty);\mathcal{X}_{\mathrm{BT}}\times\mathcal{X}^{N})$
with the topology of uniform convergence (in the $\mathcal{X}_{\mathrm{BT}}\times\mathcal{X}^{N}$
norm) on compact subsets of $[0,\infty)$.
\begin{prop}
\label{prop:Phatfeller}The map $\Phi^{[b]}:\mathcal{X}_{\mathrm{BT}}\times\mathcal{X}\to\mathcal{C}([0,\infty);\mathcal{X}_{\mathrm{BT}}\times\mathcal{X}^{N})$
is continuous (with respect to the just-defined topology on the target).
Moreover, the semigroup
$\{\hat{P}_{t}^{[b]}\}_{t\ge0}$ has the Feller property: if $F\in\mathcal{C}_{\mathrm{b}}(\mathcal{X}_{\mathrm{BT}}\times\mathcal{X}^{N})$,
then~$\hat{P}_{t}^{[b]}F\in\mathcal{C}_{\mathrm{b}}(\mathcal{X}_{\mathrm{BT}}\times\mathcal{X}^{N})$
as well.
\end{prop}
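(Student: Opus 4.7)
\textbf{Proof plan for \propref{Phatfeller}.}

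The plan is to first prove the continuity of $\Phi^{[b]}$ and then deduce the Feller property as an immediate consequence. Indeed, if $\Phi^{[b]}:\mathcal{X}_{\mathrm{BT}}\times\mathcal{X}^N\to\mathcal{C}([0,\infty);\mathcal{X}_{\mathrm{BT}}\times\mathcal{X}^N)$ is continuous (pathwise, almost surely), then for any $F\in\mathcal{C}_{\mathrm{b}}(\mathcal{X}_{\mathrm{BT}}\times\mathcal{X}^N)$ and any convergent sequence $\mathbf{v}^{(n)}\to\mathbf{v}$, the bounded random variables $F(\Phi^{[b]}(\mathbf{v}^{(n)})(t,\cdot))$ converge almost surely to $F(\Phi^{[b]}(\mathbf{v})(t,\cdot))$, and the dominated convergence theorem gives $\hat P_t^{[b]}F(\mathbf{v}^{(n)})\to\hat P_t^{[b]}F(\mathbf{v})$.

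For the continuity of $\Phi^{[b]}$, write $\Phi^{[b]}(\mathbf{v})(t,\cdot)=\tau_{b-b_t}\Psi(\mathbf{v})(t,\cdot)$, where $b_t=b_t(\mathbf{v})$ is the solution of the ODE \eqref{bteqn}. By \cite[Theorem 1.1]{DGR19}, the solution map $\Psi$ is almost surely continuous from $\mathcal{X}^{2+N}$ into $\mathcal{C}([0,\infty);\mathcal{X}^{2+N})$; by \lemref{dynamicspreservelimits} it almost surely sends $\mathcal{X}_{\mathrm{BT}}\times\mathcal{X}^N$ into $\mathcal{C}([0,\infty);\mathcal{X}_{\mathrm{BT}}\times\mathcal{X}^N)$. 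Combined with the joint continuity of the translation map $(y,f)\mapsto\tau_y f$ on $\mathbb{R}\times\mathcal{X}$ (which is routine from the definition of the weighted seminorms) and the fact that the conditions defining $\mathcal{X}_{\mathrm{BT}}$ are invariant under translation, it suffices to establish the following claim: if $\mathbf{v}^{(n)}\to\mathbf{v}$ in $\mathcal{X}_{\mathrm{BT}}\times\mathcal{X}^N$ and we let $b_t^{(n)}$, $b_t$ denote the corresponding solutions of \eqref{bteqn} with $b_0^{(n)}=b_0=b$, then $b_t^{(n)}\to b_t$ uniformly on compact subsets of $[0,\infty)$ almost surely.

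To prove the claim, I will use the characterization $b_t=Z_{b,t}^{-1}(0)$ from \lemref{ZtsolvestheODE}, which I find more robust than working directly with the vector field in \eqref{bteqn}. The continuity of $\Psi$ implies that, along a fixed sample path, $(u_{\mathrm{B}}^{(n)},u_{\mathrm{T}}^{(n)})\to(u_{\mathrm{B}},u_{\mathrm{T}})$ in $\mathcal{C}([0,T];\mathcal{X}^2)$ for every $T<\infty$; in particular the convergence is uniform on every compact set $[0,T]\times[-R,R]$. Consequently $Z_{b,t}^{(n)}(x)=\tfrac12\int_b^x[u_{\mathrm{T}}^{(n)}-u_{\mathrm{B}}^{(n)}](t,y)\,\dif y$ converges to $Z_{b,t}(x)$ locally uniformly in $(t,x)$. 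Since $(u_{\mathrm{B}},u_{\mathrm{T}})(t,\cdot)\in\mathcal{X}_{\mathrm{BT}}$ for every $t$, the function $x\mapsto Z_{b,t}(x)$ is strictly increasing with $Z_{b,t}(\pm\infty)=\pm\infty$; moreover, pre-compactness of the trajectory $\{(u_{\mathrm{B}},u_{\mathrm{T}})(t,\cdot):t\in[0,T]\}$ (which follows from the continuity of $\Psi$) ensures that there exist $R,\delta>0$, depending on $T$ and $\mathbf{v}$ but not on $n$ for large $n$, such that $b_t,b_t^{(n)}\in[-R,R]$ for all $t\in[0,T]$ and $\inf_{t\in[0,T],\,|x|\le R}(u_{\mathrm{T}}^{(n)}-u_{\mathrm{B}}^{(n)})(t,x)\ge\delta$. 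Inverting at $0$ with this uniform lower bound on $\partial_x Z_{b,t}^{(n)}$ yields $b_t^{(n)}\to b_t$ uniformly on $[0,T]$.

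The main obstacle is precisely this uniform control: one must rule out that $b_t$ or $b_t^{(n)}$ escapes to infinity in finite time, and that the derivative $\partial_x Z_{b,t}^{(n)}$ at the relevant points degenerates along the sequence. Both reduce to the pre-compactness of the forward trajectory in $\mathcal{X}_{\mathrm{BT}}\times\mathcal{X}^N$, which follows from the continuity of $\Psi$ together with the fact (\lemref{dynamicspreservelimits}) that the dynamics preserves $\mathcal{X}_{\mathrm{BT}}$; quantitative estimates on the rate at which the integral $\int_0^x[u_{\mathrm{T}}-u_{\mathrm{B}}](t,y)\,\dif y$ escapes to $\pm\infty$, uniformly in $t\in[0,T]$ and along the convergent sequence, can be harvested from the Grönwall argument in the proof of \lemref{dynamicspreservelimits}. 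Once this localization is in hand, everything else is the standard continuous dependence of implicit functions and of ODEs.
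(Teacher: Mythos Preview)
Your overall plan is the same as the paper's: reduce to continuity of $\mathbf{v}\mapsto (b_t)_{t\ge0}=(Z_{b,t}[\mathbf{v}]^{-1}(0))_{t\ge0}$, then get Feller by bounded convergence. But there is a real error in the central step. You write
\[
Z_{b,t}^{(n)}(x)=\tfrac12\int_b^x[u_{\mathrm{T}}^{(n)}-u_{\mathrm{B}}^{(n)}](t,y)\,\dif y,
\]
and then infer locally uniform convergence of $Z_{b,t}^{(n)}$ from that of $(u_{\mathrm{B}}^{(n)},u_{\mathrm{T}}^{(n)})$. This formula is false for $t>0$: by \eqref{Zbtdef}--\eqref{Zaddition} one has
\[
Z_{b,t}(x)=Z_{b,t}(b)+\tfrac12\int_b^x[u_{\mathrm{T}}-u_{\mathrm{B}}](t,y)\,\dif y,
\]
and the offset $Z_{b,t}(b)=\tfrac12[h_{\mathrm{T}}^{[b]}-h_{\mathrm{B}}^{[b]}](t,b)$ is generically nonzero (indeed, with your formula $Z_{b,t}^{-1}(0)\equiv b$, so $b_t$ would never move). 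The entire difficulty is precisely the continuous dependence of this KPZ-level quantity $Z_{b,t}(b)$ on $\mathbf{v}$; it is not a functional of $(u_{\mathrm{B}},u_{\mathrm{T}})(t,\cdot)$ alone but of the full time history.

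The paper handles exactly this point: it derives, by testing the KPZ equation against a smooth compactly supported $\varphi$ and integrating by parts, an explicit representation of $Z_{b,t}[\mathbf{v}](b)$ as a space--time integral of polynomial expressions in $(u_{\mathrm{B}},u_{\mathrm{T}})$ against $\varphi$ and $\varphi'$ (no spatial derivatives of $u$ appear). Continuity of $\Psi$ in $\mathcal{C}([0,T];\mathcal{X}^2)$ then gives continuity of $\mathbf{v}\mapsto Z_{b,t}[\mathbf{v}](b)$, after which one writes $Z_{b,t}^{-1}(0)=\overline{Z}_b[\mathbf{u}(t,\cdot)]^{-1}(-Z_{b,t}(b))$ and applies \lemref{inversionthing}. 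Your inversion step and localization argument are fine once this missing ingredient is supplied, but as written the proposal skips the only nontrivial analytic content of the proof.
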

\noindent We will prove \propref{Phatfeller} at the end of this section.

The first main result of this section concerns the invariance of the
tilted measures introduced in the statement of \thmref{stationary-shocks}.
\begin{prop}
\label{prop:preservetildenu}Let $\nu$ and $\hat{\nu}^{[b]}$ be
as in the statement of \thmref{stationary-shocks}. Then 
\begin{equation}
\hat{\nu}^{[b]}(\mathcal{X}_{\mathrm{BT}})=1\label{eq:nuhatXBtis1}
\end{equation}
 and 
\begin{equation}
(\hat{P}_{t}^{[b]})^{*}\hat{\nu}^{[b]}=\hat{\nu}^{[b]}.\label{eq:nuhatinvariant}
\end{equation}
\end{prop}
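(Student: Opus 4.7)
The plan is to handle the two claims separately. For \eqref{nuhatXBtis1}, I would first observe that $v_\B<v_\T$ holds $\nu$-a.s.\ by hypothesis, and hence $\hat\nu^{[b]}$-a.s.\ by absolute continuity. The integral condition $\int_0^x[v_\T-v_\B]\to\pm\infty$ follows from the Birkhoff--Khinchin theorem applied to the spatially stationary field $v_\T-v_\B$: the limit $\lim_{L\to\infty}L^{-1}\int_0^L[v_\T-v_\B]$ exists $\nu$-a.s.\ and equals $\mathbb{E}^\nu[(v_\T-v_\B)(0)\mid\mathcal{I}]$ for the shift-invariant $\sigma$-algebra $\mathcal{I}$, which is strictly positive thanks to $v_\B<v_\T$. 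A brief verification using the ergodic decomposition of $\nu$ also shows that $\hat\nu^{[b]}$ has total mass~$1$.

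For \eqref{nuhatinvariant}, I would first reduce to $\nu$ extremal (so that $C(\mathbf{v})\equiv a_\T-a_\B$) by the ergodic decomposition in \cite[Theorem 1.2]{DGR19}, and to $b=0$ via the translation covariances $\hat\nu^{[b]}=T_b^{*}\hat\nu^{[0]}$ and $\hat P_t^{[b]}F(\mathbf{v})=\bigl(\hat P_t^{[0]}(F\circ T_b)\bigr)(T_{-b}\mathbf{v})$, both of which follow directly from space-stationarity of $\nu$ and of the dynamics. The central ingredient is a pathwise mass-conservation identity: since the noise terms cancel in the equation for $u_\T-u_\B$, this difference obeys a deterministic conservation law $\partial_t(u_\T-u_\B)+\partial_x((u_\T-u_\B)V)=0$ whose transport velocity $V$ is precisely the vector field in \eqref{bteqn}. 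Consequently, $b_0\mapsto b_t(b_0,\mathbf{v})$ is a $C^1$ diffeomorphism of $\mathbb{R}$ with Jacobian
\begin{equation*}
(u_\T-u_\B)(t,b_t)\,\partial_{b_0}b_t=(v_\T-v_\B)(b_0),
\end{equation*}
so the shock characteristics transport $(v_\T-v_\B)\,\dif x$ into $(u_\T-u_\B)(t,\cdot)\,\dif x$. In this picture $\hat\nu^{[0]}$ is the Palm distribution of the spatially stationary density $v_\T-v_\B$.

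Using these ingredients, I would introduce the $\sigma$-finite auxiliary measure $\Pi(\dif\mathbf{v},\dif b_0)=(v_\T-v_\B)(b_0)\,\dif\nu(\mathbf{v})\,\dif b_0$ on $\mathcal{X}_{\mathrm{BT}}\times\mathbb{R}$, whose disintegration along $b_0$ gives $(a_\T-a_\B)\,\hat\nu^{[b_0]}\otimes\dif b_0$. A change of variables $y=b_t(b_0,\mathbf{v})$ in the $b_0$-integral via the Jacobian identity, followed by the invariance of $\nu$ under the Burgers dynamics ($\mathbf{u}(t,\cdot)\sim\nu$), shows that $\Pi$ is invariant under the Eulerian flow $\Psi_t(\mathbf{v},b_0)=(\mathbf{u}(t,\cdot),b_t(b_0,\mathbf{v}))$. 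To convert $\Pi$-invariance into shock-frame invariance of $\hat\nu^{[0]}$ under $\hat P_t^{[0]}$, I would augment $\Pi$ with a memory variable tracking the initial shock position, analyze the resulting three-coordinate flow, and use the translation covariance $\hat\nu^{[y]}=T_y^{*}\hat\nu^{[0]}$ to collapse the memory dependence into the desired shock-frame identity.

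The main obstacle will be the final conversion step: because $\Pi$ is only $\sigma$-finite and the claim is about a conditional distribution, passing from $\Pi$-invariance to $\hat\nu^{[0]}$-invariance requires care. I expect this to reduce to a Palm-theoretic conditional independence statement (roughly, that the shock-frame view of $\mathbf{u}(t,\cdot)$ is independent of the initial shock position $b_0$ given the final shock position $b_t$), which I would prove either via a Mecke-type identity for stationary marked fields or via a truncation argument restricting $b_0\in[0,L]$ and controlling the $o(L)$ boundary contributions using stationarity of shock fluctuations and the $L^2$ hypothesis on $\nu$; the Feller property established in \propref{Phatfeller} would then promote the resulting a.e.\ $b$ invariance to every $b$.
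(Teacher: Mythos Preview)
Your treatment of \eqref{nuhatXBtis1} is essentially the same as the paper's: $\nu(\mathcal{X}_{\mathrm{BT}})=1$ by Birkhoff--Khinchin plus the strict ordering, and absolute continuity of $\hat\nu^{[b]}$ finishes it. The check that $\hat\nu^{[b]}$ has total mass $1$ via conditioning on the invariant $\sigma$-algebra is also how the paper does it (inside the proof of \propref{tiltedpreserved}).

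For \eqref{nuhatinvariant}, your core ingredient---the Jacobian identity $(u_{\mathrm T}-u_{\mathrm B})(t,b_t)\,\partial_{b_0}b_t=(v_{\mathrm T}-v_{\mathrm B})(b_0)$ coming from the continuity equation for $u_{\mathrm T}-u_{\mathrm B}$---is exactly right, and it is in fact equivalent to the paper's change of variables. The paper, however, organizes the argument around the variable $\zeta=Z_{b,t}(x)=\frac12\int_{b}^{x}[u_{\mathrm T}-u_{\mathrm B}](t,\cdot)$ rather than around $b_0$. By \lemref{ZtsolvestheODE}, the characteristic map $b_0\mapsto b_t$ is precisely $\zeta\mapsto Z_{b,t}^{-1}(\zeta)$ with $\zeta$ fixed in time; so in the $\zeta$-parametrization the flow is trivial. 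The paper then shows directly, via the change of variables $\dif\zeta=\tfrac12(u_{\mathrm T}-u_{\mathrm B})\,\dif x$ and the Birkhoff--Khinchin theorem, that for any bounded $F$ both $\mathbb{E}F(\hat{\mathbf v}(t,\cdot))$ and $\mathbb{E}\hat P_t^{[b]}F(\hat{\mathbf v}(0,\cdot))$ equal the same ergodic average $\lim_{M\to\infty}M^{-1}\int_0^M\mathbb{E}F(\tau_{b-Z_{b,t}^{-1}(\zeta)}\mathbf u(t,\cdot))\,\dif\zeta$. This sidesteps the $\sigma$-finite Palm measure entirely: there is no need for a Mecke identity, a truncation $b_0\in[0,L]$, or any conditional-independence statement. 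The obstacle you flag---passing from $\Pi$-invariance to invariance of the conditional---is genuine in your formulation, but it evaporates once you average in the conserved coordinate $\zeta$ rather than in the moving coordinate $b_0$.

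Two minor points: your reduction to extremal $\nu$ is unnecessary (the paper handles general translation-invariant $\nu$ by carrying the conditional expectation $B[w_{\mathrm B},w_{\mathrm T}]=\mathbb{E}[(w_{\mathrm T}-w_{\mathrm B})(b)\mid\mathcal I]$ throughout), and the paper packages all of this as a separate \propref{tiltedpreserved}, which yields \eqref{nuhatinvariant} immediately upon taking $\mu_t=\nu$ for all $t$.
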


The second main result concerns the stability of the tilted measures
$\hat{\nu}_{a_{\mathrm{B}},a_{\mathrm{T}}}^{[b]}$ defined after the
statement of \thmref{stationary-shocks}.
\begin{prop}
\label{prop:convergetotildenu}Let $a_{\mathrm{B}}<a_{\mathrm{T}}$.
Let $\delta_{a_{\mathrm{B}},a_{\mathrm{T}}}$ be the measure on $\mathcal{X}_{\mathrm{BT}}$
with a single atom at the constant function $(a_{\mathrm{B}},a_{\mathrm{T}})$.
Then for any $b\in\mathbb{R}$, we have
\[
\lim_{t\to\infty}(\hat{P}_{t}^{[b]})^{*}\delta_{a_{\mathrm{B}},a_{\mathrm{T}}}=\hat{\nu}_{a_{\mathrm{B}},a_{\mathrm{T}}}^{[b]}
\]
weakly with respect to the topology of $\mathcal{X}_{\mathrm{BT}}$.
\end{prop}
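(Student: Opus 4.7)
The plan is to combine the convergence in the original frame already established in \cite[Theorem 1.3]{DGR19} with tightness and a uniqueness argument in the shock's reference frame. Since the constant initial condition $(a_\mathrm{B},a_\mathrm{T})$ is (trivially) spatially periodic, \cite[Theorem 1.3]{DGR19} yields $\mathrm{Law}((u_\mathrm{B},u_\mathrm{T})(t,\cdot)) \to \nu_{a_\mathrm{B},a_\mathrm{T}}$ weakly in $\mathcal{X}^2$ as $t\to\infty$, and by \cite[Theorem 1.2,~(P4)]{DGR19} the second moments in each weighted seminorm $\|\cdot\|_{\mathcal{C}_{\p_\ell}}$, $\ell>1/2$, are uniformly bounded along the family.

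The first step is to establish tightness of $\{\mathrm{Law}(\tau_{b-b_t}(u_\mathrm{B},u_\mathrm{T})(t,\cdot))\}_{t\ge 0}$ in $\mathcal{X}_\mathrm{BT}$. Translation by the random amount $b-b_t$ is precisely what produces the size-biasing. The plan here is to exploit the identity (derived by writing $b_t=Z_{b,t}^{-1}(0)$ and using the change of variables in \lemref{ZtsolvestheODE})
\begin{equation*}
\mathbb{E}\bigl[F(\tau_{b-b_t}(u_\mathrm{B},u_\mathrm{T})(t,\cdot))\bigr]
= \tfrac{1}{2}\,\mathbb{E}\!\left[\int_{\mathbb{R}} F\bigl(\tau_{b-y}(u_\mathrm{B},u_\mathrm{T})(t,\cdot)\bigr)\,\delta(Z_{b,t}(y))\,[u_\mathrm{T}-u_\mathrm{B}](t,y)\,dy\right]\!,
\end{equation*}
regularized by a smooth approximation of the delta function. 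This converts pointwise evaluation at the random shock $b_t$ into a spatial average weighted by the density $[u_\mathrm{T}-u_\mathrm{B}](t,\cdot)/(a_\mathrm{T}-a_\mathrm{B})$; combined with the spatial stationarity of $\nu_{a_\mathrm{B},a_\mathrm{T}}$ and the uniform $L^2$ control of the weighted norms under $\nu_{a_\mathrm{B},a_\mathrm{T}}$ (so that $(v_\mathrm{T}-v_\mathrm{B})(b)$ has uniformly bounded first moment), this yields tightness.

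Given tightness, Prokhorov's theorem provides, for any $t_n\to\infty$, a subsubsequence along which $\mathrm{Law}(\tau_{b-b_{t_n}}(u_\mathrm{B},u_\mathrm{T})(t_n,\cdot))\to\mu$ weakly for some probability measure $\mu$ on $\mathcal{X}_\mathrm{BT}$ (membership in $\mathcal{X}_\mathrm{BT}$ follows from \lemref{dynamicspreservelimits} together with Portmanteau). The semigroup property of $\hat P_t^{[b]}$ and the Feller property (\propref{Phatfeller}) then force $\mu$ to be $\hat P_s^{[b]}$-invariant for every $s\ge0$, since for any $F\in\mathcal{C}_\mathrm{b}(\mathcal{X}_\mathrm{BT})$,
\begin{equation*}
\int \hat P_s^{[b]}F\, d\mu = \lim_n \int F\, d\bigl((\hat P_{t_n+s}^{[b]})^*\delta_{a_\mathrm{B},a_\mathrm{T}}\bigr) = \int F\, d\mu.
\end{equation*}
Finally, to identify $\mu=\hat\nu_{a_\mathrm{B},a_\mathrm{T}}^{[b]}$, I would ``untilt'' $\mu$ by defining the measure $\mu'$ on $\mathcal{X}^2$ via $d\mu'/d\mu(v_\mathrm{B},v_\mathrm{T}) = (a_\mathrm{T}-a_\mathrm{B})/(v_\mathrm{T}-v_\mathrm{B})(b)$ and verify that $\mu'$ is space-translation-invariant, $P_t$-invariant for \eqref{SBE-many}, and has mean $(a_\mathrm{B},a_\mathrm{T})$ with finite second moments. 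The extremality portion of \cite[Theorem~1.2]{DGR19} would then force $\mu'=\nu_{a_\mathrm{B},a_\mathrm{T}}$, and hence $\mu=\hat\nu_{a_\mathrm{B},a_\mathrm{T}}^{[b]}$ by \eqref{chgmeasure-intro-ergodicones}.

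The main obstacle is the first step: the shift $b_t-b$ contains a growing deterministic drift of slope $(a_\mathrm{B}+a_\mathrm{T})/2$, so tightness cannot come from tightness of $b_t-b$ in $\mathbb{R}$; it must come from the spatial stationarity of the limit and the size-biasing identity above. Making the delta-function calculation rigorous via a smooth-approximation/test-function argument, and in particular verifying the translation invariance of the untilted measure $\mu'$ in the uniqueness step, is where most of the technical work lies.
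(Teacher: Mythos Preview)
Your compactness-plus-uniqueness scheme has a real gap in the invariance step. From weak convergence of $(\hat P_{t_n}^{[b]})^*\delta_{a_\mathrm{B},a_\mathrm{T}}$ along a subsequence $t_n\to\infty$ you write
\[
\int \hat P_s^{[b]}F\,d\mu=\lim_n\int F\,d\bigl((\hat P_{t_n+s}^{[b]})^*\delta_{a_\mathrm{B},a_\mathrm{T}}\bigr)=\int F\,d\mu,
\]
but the second equality is unjustified: the shifted sequence $t_n+s$ is not the sequence along which you extracted convergence, and Feller plus tightness do \emph{not} force subsequential limits to be invariant (that is exactly why Krylov--Bogolyubov time-averages). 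Your untilting step is also fragile: you would need $1/(v_\mathrm{T}-v_\mathrm{B})(b)\in L^1(\mu)$ and that the resulting $\mu'$ is a probability measure, neither of which is available a priori; and even then, the classification in \cite[Theorem~1.2]{DGR19} identifies \emph{extremal} invariant measures by their means, so a mere mean condition on $\mu'$ does not pin it down without an ergodicity argument.

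The paper sidesteps all of this by a direct identification rather than compactness. The key input is \propref{tiltedpreserved}, which says that for any translation-invariant $\mu_0$, the evolution $(\hat P_t^{[b]})^*\hat\mu_0^{[b]}$ equals the tilt $\hat\mu_t^{[b]}$ of $P_t^*\mu_0$ at every time. Applied with $\mu_0=\delta_{a_\mathrm{B},a_\mathrm{T}}$ (which is its own tilt), this gives, for every $t$, the exact formula
\[
\mathbb{E}F(\hat{\mathbf u}^{[b]}(t,\cdot))=\mathbb{E}\!\left[F(\mathbf u(t,\cdot))\,\frac{u_\mathrm{T}(t,b)-u_\mathrm{B}(t,b)}{a_\mathrm{T}-a_\mathrm{B}}\right].
\]
Now one only has to pass the right side to the limit: $P_t^*\delta_{a_\mathrm{B},a_\mathrm{T}}\to\nu_{a_\mathrm{B},a_\mathrm{T}}$ by \cite[Theorem~1.3]{DGR19}, and the Radon--Nikodym factor is uniformly integrable thanks to the uniform $L^2$ bound \cite[Lemma~5.3]{DGR19}. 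A Tietze extension handles the upgrade from the $\mathcal X^2$ to the $\mathcal X_\mathrm{BT}$ topology. Your size-biasing identity is morally the same computation as the one underlying \propref{tiltedpreserved}; the point is to use it to identify the law at each fixed $t$ and then take a limit, rather than to feed it into a tightness/uniqueness machine.
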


The key ingredient in the proofs of \propref{preservetildenu}~and~\ref{prop:convergetotildenu}
is \propref{tiltedpreserved} below, which describes how a translation-invariant
measure evolves under $\hat{P}_{t}^{[b]}$. This will allow us to
tilt the invariant measures constructed in \cite{DGR19} to obtain
invariant measures in the reference frame of the shocks. We use the
notation from \cite{DGR19} that $\mathscr{P}_{\mathbb{R}}(\mathcal{X}_{\mathrm{BT}}\times\mathcal{X}^{N})$
is the space of translation-invariant probability measures on $\mathcal{X}_{\mathrm{BT}}\times\mathcal{X}^{N}$. (The subscript $\mathbb{R}$ denotes invariance under the action of $\mathbb{R}$ on the line by translations.)
If $\mu\in\mathscr{P}_{\mathbb{R}}(\mathcal{X}_{\mathrm{BT}})$ and
$(w_{\mathrm{B}},w_{\mathrm{T}})\sim\mu$, then (as noted in the statement
of \thmref{stationary-shocks}) the quantity
\[
B[w_{\mathrm{B}},w_{\mathrm{T}}]\coloneqq\lim_{L\to\infty}\frac{1}{L}\int_{0}^{L}[w_{\mathrm{T}}-w_{\mathrm{B}}](x)\,\dif x
\]
exists almost surely by the Birkhoff--Khinchin theorem.
\begin{prop}
\label{prop:tiltedpreserved}Let $N\ge0$. Let $\mu_{0}\in\mathscr{P}_\mathbb{R}(\mathcal{X}_\mathrm{BT}\times\mathcal{X}^N)$.
For each $t\ge0$, define another measure $\hat{\mu}_{t}^{[b]}$ on
$\mathcal{X}_{\mathrm{BT}}\times\mathcal{X}^{N}$, absolutely continuous
with respect to $\mu_{t}\coloneqq P_{t}^{*}\mu_{0}$, by
\begin{equation}
\frac{\dif\hat{\mu}_{t}^{[b]}}{\dif\mu_{t}}((w_{\mathrm{B}},w_{\mathrm{T}}),\tilde{\mathbf{w}})=\frac{w_{\mathrm{T}}(b)-w_{\mathrm{B}}(b)}{B[w_{\mathrm{B}},w_{\mathrm{T}}]}.\label{eq:RNderiv}
\end{equation}
Then $\hat{\mu}_{t}^{[b]}$ is a probability measure and
\begin{equation}
(\hat{P}_{t}^{[b]})^{*}\hat{\mu}_{0}^{[b]}=\hat{\mu}_{t}^{[b]}.\label{eq:muhatevolution}
\end{equation}
Moreover, for any $t\ge0$, if $\hat{\mathbf{v}}\sim\hat{\mu}_{t}^{[b]}$,
then for any deterministic $\zeta\in\mathbb{R}$, we have
\begin{equation}
\tau_{b-\overline{Z}_{b}[\hat{\mathbf{v}}]^{-1}(\zeta)}\hat{\mathbf{v}}\overset{\mathrm{law}}{=}\hat{\mathbf{v}}.\label{eq:shiftinvariant}
\end{equation}
\end{prop}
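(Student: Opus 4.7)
Claim (1) is essentially immediate from Birkhoff--Khinchin: the denominator $B[\bw]$ is spatially translation-invariant, hence measurable with respect to the invariant $\sigma$-algebra $\mathcal{I}$ for the action of spatial translations on $\cX_{\mathrm{BT}}\times\cX^N$. Applying the ergodic theorem to the translation-invariant measure $\mu_t$ (and using translation invariance to replace the base point $0$ by $b$) gives $\Em_{\mu_t}[w_{\T}(b)-w_{\B}(b)\mid\mathcal{I}]=B[\bw]$ $\mu_t$-a.s.; the tower property, combined with the $\mathcal{I}$-measurability of $1/B[\bw]$, then yields $\int(w_{\T}(b)-w_{\B}(b))/B[\bw]\,d\mu_t=1$.

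For the shift invariance \eqref{shiftinvariant}, I would exploit translation invariance of $\mu_t$ together with the algebraic identity $\overline{Z}_b[\tau_{-y}\bw]^{-1}(\zeta)=\overline{Z}_{b+y}[\bw]^{-1}(\zeta)-y$ to show that, for every $y\in\Rm$,
$$\int F(\tau_{b-\overline{Z}_b[\bw]^{-1}(\zeta)}\bw)\,d\hat\mu_t^{[b]}=\Em_{\mu_t}\!\left[F(\tau_{b-\overline{Z}_{b+y}[\bw]^{-1}(\zeta)}\bw)\,\frac{(w_{\T}-w_{\B})(b+y)}{B[\bw]}\right].$$
Averaging this over $y\in[-L,L]$ and successively substituting $\xi=\overline{Z}_b[\bw](b+y)$ (with Jacobian $\tfrac12(w_{\T}-w_{\B})(b+y)$) and $w=\overline{Z}_b[\bw]^{-1}(\zeta+\xi)$ converts the right-hand side into $(2L)^{-1}\Em_{\mu_t}[\int_{A_L(\zeta)}^{B_L(\zeta)}F(\tau_{b-w}\bw)(w_{\T}-w_{\B})(w)/B[\bw]\,dw]$, where $A_L(\zeta)=\overline{Z}_{b-L}[\bw]^{-1}(\zeta)$ and $B_L(\zeta)=\overline{Z}_{b+L}[\bw]^{-1}(\zeta)$. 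Running the identical computation with $\zeta=0$ gives $\int F\,d\hat\mu_t^{[b]}$ over the range $[b-L,b+L]$. The two integration ranges differ on intervals with $(w_{\T}-w_{\B})$-integral equal to $2|\zeta|$ at each endpoint, so (after reducing to the ergodic case by conditioning on $\mathcal{I}$ to make $1/B[\bw]$ bounded) the difference of the two expressions is $O(|\zeta|\|F\|_\infty/L)$; since both sides of \eqref{shiftinvariant} are $L$-independent, they agree.

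For the evolution formula \eqref{muhatevolution}, the central ingredient is the Jacobian of the shock-position flow $z\mapsto b_t^{[z]}$ for fixed $(\bv,\omega)$. From \lemref{ZtsolvestheODE} together with the identity $Z_{z,t}[\bv](x)-Z_{z',t}[\bv](x)=\tfrac12\int_z^{z'}(v_{\T}-v_{\B})$ (inherited from $t=0$ because the KPZ equation preserves differences of initial conditions that are constant in $x$), differentiating the defining relation $Z_{z,t}[\bv](b_t^{[z]})=0$ in $z$ yields
$$\frac{\partial b_t^{[z]}}{\partial z}=\frac{(v_{\T}-v_{\B})(z)}{(u_{\T}-u_{\B})(t,b_t^{[z]})},$$
so the shock-position flow pushes the random measure $(v_{\T}-v_{\B})(z)\,dz$ forward to $(u_{\T}-u_{\B})(t,w)\,dw$. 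Using translation invariance of $\mu_0$ together with the equivariance of the stochastic Burgers dynamics (shifting $\bv$ and the noise by $-y$ relabels the shock anchor from $b$ to $b+y$ and translates the trajectory by $-y$), I would rewrite $\int\hat P_t^{[b]}F\,d\hat\mu_0^{[b]}$ as the $z$-average over $[b-L,b+L]$ of $\Em[F(\tau_{b-b_t^{[z]}}\bu(t,\cdot))(v_{\T}-v_{\B})(z)/B[\bv]]$ and then substitute $w=b_t^{[z]}$. The result has the same integrand as the spatial-average representation of $\int F\,d\hat\mu_t^{[b]}$, except that the integration range is $[b_t^{[b-L]},b_t^{[b+L]}]$ instead of $[b-L,b+L]$, and the denominator is $B[\bv]$ instead of $B[\bu(t,\cdot)]$.

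The main obstacle is controlling these two discrepancies in the limit $L\to\infty$. For the denominators, $B[\bu(t,\cdot)]=B[\bv]$ almost surely because the stochastic Burgers equation is a local conservation law: integrating \eqref{SBE-many} in $x$, the difference $L^{-1}[\int_0^L(u_{\T}-u_{\B})(t,y)\,dy-\int_0^L(v_{\T}-v_{\B})(y)\,dy]$ reduces to boundary flux terms of order $O(1/L)$ for the spatially-stationary solutions of \cite{DGR19}, so the two Birkhoff limits coincide. For the ranges, $|b_t^{[b\pm L]}-(b\pm L)|$ is stationary in $L$ with integrable law (by translation invariance of the initial condition and noise), so the integrals of $(u_{\T}-u_{\B})(t,\cdot)$ over the excluded intervals have $L$-independent distributions and, divided by $2L$, go to $0$ in expectation (again using ergodic decomposition to handle the $1/B[\bw]$ factor). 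Since each side of \eqref{muhatevolution} is an $L$-independent pre-limit identity, this forces them to be equal.
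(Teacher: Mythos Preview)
Your approach is a genuine alternative to the paper's, built on the same underlying idea---the Jacobian identity $\partial b_t^{[z]}/\partial z=(v_\T-v_\B)(z)/(u_\T-u_\B)(t,b_t^{[z]})$, which is exactly the ``mass--biasing'' mechanism behind the tilt. The paper exploits this same identity but packages it differently: rather than changing variables from the initial anchor $z$ to the time-$t$ shock position $w=b_t^{[z]}$ in the physical space, it passes once and for all to the height-difference variable $\zeta$ via \eqref{chgvarintheintegral}, proves the single identity \eqref{Evhatt} expressing $\int F\,\dif\hat\mu_t^{[b]}$ as a Cesàro $\zeta$-average of a \emph{bounded} integrand, and then shows that both sides of \eqref{muhatevolution} equal this same limit. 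The boundedness is the point: in the $\zeta$-variable the integrand is at most $\|F\|_\infty$ and the range shift by the (merely a.s.\ finite) quantity $Z_{b,s}(b)$ does not change the range \emph{length}, so bounded convergence handles the passage to expectations with no moment hypotheses whatsoever.

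Your execution in the physical variable has a gap at exactly this step. After your substitution $w=b_t^{[z]}$, the integrand carries the weight $(u_\T-u_\B)(t,w)/B[\bv]$ and the new range is $[b_t^{[b-L]},b_t^{[b+L]}]$, whose length is not $2L$. You then assert that ``$|b_t^{[b\pm L]}-(b\pm L)|$ is stationary in $L$ with integrable law''; the stationarity is fine, but integrability is not available under the bare hypothesis $\mu_0\in\mathscr P_{\Rm}(\cX_{\mathrm{BT}}\times\cX^N)$---there is no second (or even first) moment assumed, and the displacement is governed by $Z_{b,t}(b)=\tfrac12\int_0^t[\partial_x(u_\T-u_\B)-(u_\T^2-u_\B^2)](s,b)\,\dif s$, whose integrability you have not established. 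The same issue bites your sketch of $B[\bu(t,\cdot)]=B[\bv]$: the boundary fluxes are only ``$O(1)$'' in distribution, not pointwise, and you need more than convergence in probability to pass to expectations against the unbounded weight $1/B[\bv]$. Ergodic decomposition rescues the $1/B$ factor but not the $|Z_{b,t}(b)|$ factor.

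Your route is salvageable, however, and in a way that makes the relationship to the paper's argument transparent. The pushforward property you identified says precisely that the $(u_\T-u_\B)(t,\cdot)$-mass of $[b_t^{[b-L]},b_t^{[b+L]}]$ equals the $(v_\T-v_\B)$-mass of $[b-L,b+L]$. Hence your post-substitution integral is dominated by $\|F\|_\infty\cdot Y_L$ with $Y_L:=(2L)^{-1}\int_{b-L}^{b+L}(v_\T-v_\B)(z)/B[\bv]\,\dif z$, a purely time-zero quantity with $\Em Y_L\equiv 1$ and $Y_L\to 1$ a.s.\ by Birkhoff--Khinchin. Generalized dominated convergence (Pratt/Scheffé) then gives $\Em X_L\to\Em X_\infty$, and Birkhoff applied to the $w$-average identifies $X_\infty$ with the Radon--Nikodym weight of $\hat\mu_t^{[b]}$ (with $B[\bu(t,\cdot)]$ appearing automatically, so you never actually need $B[\bu(t,\cdot)]=B[\bv]$). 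This is essentially the paper's bounded-$\zeta$-integrand argument, viewed through your change of variables; what it buys over your stated argument is that the dominating quantity is tame by construction rather than by an unproved moment bound.
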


\begin{proof}
First we check that $\hat{\mu}_{t}^{[b]}$ is a probability measure.
Let $\mathcal{I}$ be the translation-invariant sub-$\sigma$-algebra
of the Borel $\sigma$-algebra on $\mathcal{X}_{\mathrm{BT}}\times\mathcal{X}^{N}$.
Then by the Birkhoff-Khinchin theorem, $B[w_{\mathrm{B}},w_{\mathrm{T}}]$
is~$\mathcal{I}$-measurable and in fact
\[
B[w_{\mathrm{B}},w_{\mathrm{T}}]=\mathbb{E}[w_{\mathrm{T}}(b)-w_{\mathrm{B}}(b)\mid\mathcal{I}]>0.
\]
It follows that
\begin{align*}
\mathbb{E}\left[\frac{w_{\mathrm{T}}(b)-w_{\mathrm{B}}(b)}{B[w_{\mathrm{B}},w_{\mathrm{T}}]}\right] & =\mathbb{E}\left[\mathbb{E}\left[\frac{w_{\mathrm{T}}(b)-w_{\mathrm{B}}(b)}{B[w_{\mathrm{B}},w_{\mathrm{T}}]}\ \middle|\ \mathcal{I}\right]\right]=\mathbb{E}\left[\frac{\mathbb{E}[w_{\mathrm{T}}(b)-w_{\mathrm{B}}(b)\mid\mathcal{I}]}{B[w_{\mathrm{B}},w_{\mathrm{T}}]}\right]=1,
\end{align*}
so $\hat{\mu}_{t}^{[b]}$ is a probability measure as claimed.

Let $\hat{\mathbf{v}}(t,\cdot)\sim\hat{\mu}_{t}^{[b]}$ for all $t\ge0$;
we will not use any coupling between $\hat{\mathbf{v}}(t,\cdot)$
and $\hat{\mathbf{v}}(s,\cdot)$ for $t\ne s$. Consider a function
$F\in L^{\infty}(\mathcal{X}_{\mathrm{BT}}\times\mathcal{X}^{N})$.
To prove \eqref{muhatevolution}, we need to show that
\begin{equation}
\mathbb{E}F(\hat{\mathbf{v}}(t,\cdot))=\mathbb{E}\hat{P}_{t}^{[b]}F(\hat{\mathbf{v}}^{[b]}(0,\cdot)).\label{eq:muhatevolutiongoalintermsofvhat}
\end{equation}
Let $\mathbf{u}\in\mathcal{C}([0,\infty);\mathcal{X}_{\mathrm{BT}}\times\mathcal{X}^{N}$)
solve \eqref{SBE-many} with initial condition $\mathbf{u}(0,\cdot)\sim\mu_{0}$
(independent of the noise). We abbreviate $Z_{b,t}=Z_{b,t}[\mathbf{u}(0,\cdot)]$.
We will show that both the left and right sides of \eqref{muhatevolutiongoalintermsofvhat}
are equal to
\[
\lim_{M\to\infty}\frac{1}{M}\int_{0}^{M}\mathbb{E}[F(\tau_{b-Z_{b,t}^{-1}(\zeta)}\mathbf{u}(t,\cdot)]\,\dif\zeta.
\]

We first show that
\begin{equation}
\mathbb{E}F(\hat{\mathbf{v}}(t,\cdot))=\lim_{M\to\infty}\frac{1}{M}\int_{0}^{M}\mathbb{E}[F(\tau_{b-Z_{b,t}^{-1}(\zeta)}\mathbf{u}(t,\cdot))]\,\dif\zeta.\label{eq:Evhatt}
\end{equation}
The crux of the argument is the simple identity
\begin{equation}
\frac{1}{L}\int_{0}^{L}F(\tau_{-x}\mathbf{u}(t,\cdot))[u_{\mathrm{T}}-u_{\mathrm{B}}](t,b+x)\,\dif x=\frac{2}{L}\int_{Z_{b,t}(b)}^{Z_{b,t}(L+b)}F(\tau_{b-Z_{b,t}^{-1}(\zeta)}\mathbf{u}(t,\cdot))\,\dif\zeta,\label{eq:chgvarbket}
\end{equation}
which comes from making the change of variables
\begin{equation}
x=Z_{b,t}^{-1}(\zeta)-b,\qquad\dif x=\frac{2}{[u_{\mathrm{T}}-u_{\mathrm{B}}](t,b+x)}\dif\zeta.\label{eq:chgvarintheintegral}
\end{equation}
By the Birkhoff--Khinchin theorem, we have the limit
\begin{equation}
\lim_{L\to\infty}\frac{1}{L}\int_{0}^{L}F(\tau_{-x}\mathbf{u}(t,\cdot))[u_{\mathrm{T}}-u_{\mathrm{B}}](t,b+x)\,\dif x=\mathbb{E}[F(\mathbf{u}(t,\cdot))[u_{\mathrm{T}}-u_{\mathrm{B}}](t,b)\mid\mathcal{I}]\label{eq:averagebyx}
\end{equation}
almost surely. Also by the Birkhoff-Khinchin theorem (recalling \eqref{Zaddition}),
we have
\begin{equation}
\lim_{L\to\infty}\frac{Z_{b,t}(L+b)-Z_{b,t}(b)}{L}=\frac{1}{2}B[(u_{\mathrm{B}},u_{\mathrm{T}})(t,\cdot)]\label{eq:ZstoB}
\end{equation}
almost surely. Combining \eqref{chgvarbket}--\eqref{ZstoB}, we
have
\begin{align*}
\lim_{L\to\infty}\frac{1}{Z_{b,t}(L+b)-Z_{b,t}(b)}\int_{Z_{b,t}(b)}^{Z_{b,t}(L+b)}F(\tau_{b-Z_{b,t}^{-1}(\zeta)}\mathbf{u}(t,\cdot))\,\dif\zeta & =\frac{\mathbb{E}[F(\mathbf{u}(t,\cdot))[u_{\mathrm{T}}-u_{\mathrm{B}}](t,b)\mid\mathcal{I}]}{B[(u_{\mathrm{B}},u_{\mathrm{T}})(t,\cdot)]}
\end{align*}
almost surely. Since $\lim\limits _{L\to\infty}Z_{b,t}(L+b)=\infty$
almost surely by \eqref{Zaddition} and \eqref{utinXBT}, and $F$
is bounded, this means that
\begin{equation}
\lim_{M\to\infty}\frac{1}{M}\int_{0}^{M}F(\tau_{b-Z_{b,t}^{-1}(\zeta)}\mathbf{u}(t,\cdot))\,\dif\zeta=\frac{\mathbb{E}[F(\mathbf{u}(t,\cdot))[u_{\mathrm{T}}-u_{\mathrm{B}}](t,b)\mid\mathcal{I}]}{B[(u_{\mathrm{B}},u_{\mathrm{T}})(t,\cdot)]}\label{eq:changeZtoM}
\end{equation}
almost surely. Since $F$ is bounded and $B[(u_{\mathrm{B}},u_{\mathrm{T}})(t,\cdot)]$
is $\mathcal{I}$-measurable, taking the expectation in~\eqref{changeZtoM}
and using the bounded convergence theorem we deduce that
\[
\lim_{M\to\infty}\frac{1}{M}\int_{0}^{M}\mathbb{E}[F(\tau_{b-Z_{b,t}^{-1}(\zeta)}\mathbf{u}(t,\cdot))]\,\dif\zeta=\mathbb{E}\left[\frac{F(\mathbf{u}(t,\cdot))[u_{\mathrm{T}}-u_{\mathrm{B}}](t,b)}{B[(u_{\mathrm{B}},u_{\mathrm{T}})(t,\cdot)]}\right],
\]
which implies \eqref{Evhatt} because $\mathbf{u}(t,\cdot)\sim\mu_{t}$.

The next step is to show that, for any $s\ge0$, we have
\begin{equation}
\mathbb{E}\hat{P}_{s}^{[b]}F(\hat{\mathbf{v}}^{[b]}(0,\cdot))=\lim_{M\to\infty}\frac{1}{M}\int_{0}^{M}\mathbb{E}[F(\tau_{b-Z_{b,s}^{-1}(\zeta)}\mathbf{u}(s,\cdot))]\,\dif\zeta.\label{eq:attimes}
\end{equation}
For a random variable $y\in\mathbb{R}$, measurable with respect to
$\mathbf{u}(0,\cdot)$, let $\mathbf{u}^{[y]}$ solve \eqref{SBE-many}
with initial condition $\mathbf{u}^{[y]}(0,\cdot)=\tau_{y}\mathbf{u}(0,\cdot)$.
We can compute
\begin{align}
\hat{P}_{s}^{[b]}F(\tau_{y}\mathbf{u}(0,\cdot)) & =\mathbb{E}[F(\tau_{b-Z_{b,s}[\tau_{y}\mathbf{u}(0,\cdot)]^{-1}(0)}\mathbf{u}^{[y]}(s,\cdot))\mid\mathbf{u}(0,\cdot)]\nonumber \\
 & =\mathbb{E}[F(\tau_{b-(Z_{b,s}[\tau_{y}\mathbf{u}(0,\cdot)]^{-1}(0)-y)}\tau_{-y}\mathbf{u}^{[y]}(s,\cdot))\mid\mathbf{u}(0,\cdot)]\nonumber \\
 & =\mathbb{E}[F(\tau_{b-Z_{b-y,s}^{-1}(0)}\mathbf{u}(s,\cdot))\mid\mathbf{u}(0,\cdot)].\label{eq:PtbsFrewrite}
\end{align}
The first equality above is by the definition of $\hat{P}_{s}^{[b]}$
and the second is a tautology. The third holds because by the translation-invariance
of the noise, $(Z_{b,s}[\tau_{y}\mathbf{u}(0,\cdot)]^{-1}(0)-y,\tau_{y}\mathbf{u}^{[y]}(s,\cdot))$
and $(Z_{b-y,s}^{-1}(0),\mathbf{u}(s,\cdot))$ have the same conditional
law given $\mathbf{u}(0,\cdot)$.

Now apply \eqref{Evhatt} with $t=0$ and $F$ in that equation taken
to be $\hat{P}_{s}^{[b]}F$. This gives
\begin{align}
\mathbb{E}[\hat{P}_{s}^{[b]}F(\hat{\mathbf{v}}(0,\cdot))] & =\lim_{M\to\infty}\frac{1}{M}\int_{0}^{M}\mathbb{E}\hat{P}_{s}^{[b]}F(\tau_{b-Z_{b,0}^{-1}(\zeta)}\mathbf{u}(0,\cdot))\,\dif\zeta\nonumber \\
 & =\lim_{M\to\infty}\frac{1}{M}\int_{0}^{M}\mathbb{E}F(\tau_{b-Z_{Z_{b,0}^{-1}(\zeta),s}^{-1}(0)}\mathbf{u}(s,\cdot))\,\dif\zeta,\label{eq:applytoPhat}
\end{align}
where in the second equality we used \eqref{PtbsFrewrite} with $y=b-Z_{b,0}^{-1}(\zeta)$.
Now we can compute
\begin{equation}
\begin{aligned}Z_{Z_{b,0}^{-1}(\zeta),s}(x) & =\frac{1}{2}\left(h_{\mathrm{T}}^{[Z_{b,0}^{-1}(\zeta)]}-h_{\mathrm{B}}^{[Z_{b,0}^{-1}(\zeta)]}\right)(s,x)=\frac{1}{2}\left(h_{\mathrm{T}}^{[b]}-h_{\mathrm{B}}^{[b]}\right)(s,x)-\frac{1}{2}\int_{b}^{Z_{b,0}^{-1}(\zeta)}[u_{\mathrm{T}}-u_{\mathrm{B}}](0,y)\,\dif y\\
 & =Z_{b,s}(x)-\zeta=Z_{b,s}(b)+\overline{Z}_{b}[\mathbf{u}(s,\cdot)](x)-\zeta.
\end{aligned}
\label{eq:rewriteZZx}
\end{equation}
In the second equality we used the fact that the identity
\[
\frac{1}{2}\left(h_{\mathrm{T}}^{[Z_{b,0}^{-1}(\zeta)]}-h_{\mathrm{B}}^{[Z_{b,0}^{-1}(\zeta)]}\right)(s,x)=\frac{1}{2}\left(h_{\mathrm{T}}^{[b]}-h_{\mathrm{B}}^{[b]}\right)(s,x)-\frac{1}{2}\int_{b}^{Z_{b,0}^{-1}(\zeta)}[u_{\mathrm{T}}-u_{\mathrm{B}}](0,y)\,\dif y
\]
for all $x\in\mathbb{R}$ holds at $s=0$ and thus for all $s\ge0$
as well. In the third equality of \eqref{rewriteZZx} we used~\eqref{Zaddition}.
It follows from \eqref{rewriteZZx} that
\begin{equation}
Z_{Z_{b,0}^{-1}(\zeta),s}^{-1}(\kappa)=\overline{Z}_{b}[\mathbf{u}(s,\cdot)]^{-1}(\kappa+\zeta-Z_{b,s}(b)).\label{eq:ZZinv}
\end{equation}
Substituting \eqref{ZZinv} (with $\kappa=0$) into \eqref{applytoPhat},
we get
\begin{align*}
\mathbb{E}[\hat{P}_{s}^{[b]}F(\hat{\mathbf{v}}(0,\cdot))] & =\lim_{M\to\infty}\frac{1}{M}\int_{0}^{M}\mathbb{E}F(\tau_{b-\overline{Z}_{b}[\mathbf{u}(s,\cdot)](x)(\zeta-Z_{b,s}(b))}\mathbf{u}(s,\cdot))\,\dif\zeta\\
 & =\lim_{M\to\infty}\frac{1}{M}\int_{-Z_{b,s}(b)}^{M-Z_{b,s}(b)}\mathbb{E}F(\tau_{b-\overline{Z}_{b}[\mathbf{u}(s,\cdot)]^{-1}(\zeta)}\mathbf{u}(s,\cdot))\\
 & =\mathbb{E}F(\hat{\mathbf{v}}(s,\cdot)),
\end{align*}
where the last equality is again by \eqref{Evhatt}, this time with
$t=s$. This completes the proof of \eqref{attimes}. As indicated
above, \eqref{Evhatt} and \eqref{attimes} together imply \eqref{muhatevolutiongoalintermsofvhat}.

The proof of \eqref{shiftinvariant} is similar but easier. Without
loss of generality, assume that $t=0$. Let $\mathbf{w}=(w_{\mathrm{B}},w_{\mathrm{T}},\tilde{\mathbf{w}})\sim\mu_{0}$
and $\hat{\mathbf{w}}\sim\hat{\mu}_{0}^{[b]}$. For $\mathbf{y}\in\mathcal{X}_{\mathrm{BT}}\times\mathcal{X}^{N}$
put $F_{b,\zeta}(\mathbf{y})=F(\tau_{b-\overline{Z}_{b}[\mathbf{y}]^{-1}(\zeta)}\mathbf{y})$.
By \eqref{Evhatt}, applied at $t=0$, we have
\begin{align}
\mathbb{E}F(\tau_{b-\overline{Z}_{b}[\hat{\mathbf{v}}]^{-1}(\zeta)}\hat{\mathbf{w}}) & =\mathbb{E}F_{b,\zeta}(\hat{\mathbf{w}})\nonumber \\
 & =\lim_{M\to\infty}\frac{1}{M}\int_{0}^{M}\mathbb{E}[F_{b,\zeta}(\tau_{b-\overline{Z}_{b}[\mathbf{w}]^{-1}(\zeta')}\mathbf{w})]\,\dif\zeta'\nonumber \\
 & =\lim_{M\to\infty}\frac{1}{M}\int_{0}^{M}\mathbb{E}[F(\tau_{b-\overline{Z}_{b}[\tau_{b-\overline{Z}_{b}[\mathbf{w}]^{-1}(\zeta')}\mathbf{w}]^{-1}(\zeta)}\tau_{b-\overline{Z}_{b}[\mathbf{w}]^{-1}(\zeta')}\mathbf{w})]\,\dif\zeta'.\label{eq:firstmanipfortransinvariance}
\end{align}
For any $y\in\mathbb{R}$, we have
\[
\overline{Z}_{b}[\tau_{y}\mathbf{w}]^{-1}(\zeta)=\overline{Z}_{b-y}[\mathbf{w}]^{-1}(\zeta)+y
\]
almost surely, which means that (taking $y=b-\overline{Z}_{b}^{-1}[\mathbf{w}](\zeta')$)
\[
\overline{Z}_{b}[\tau_{b-\overline{Z}_{b}^{-1}[\mathbf{w}](\zeta')}\mathbf{w}]^{-1}(\zeta)=\overline{Z}_{\overline{Z}_{b}^{-1}[\mathbf{w}](\zeta')}[\mathbf{w}]^{-1}(\zeta)+b-\overline{Z}_{b}^{-1}[\mathbf{w}](\zeta'),
\]
so
\begin{align*}
b-\overline{Z}_{b}[\tau_{b-\overline{Z}_{b}[\mathbf{w}]^{-1}(\zeta')}\mathbf{w}]^{-1}(\zeta) & =\overline{Z}_{b}^{-1}[\mathbf{w}](\zeta')-\overline{Z}_{\overline{Z}_{b}^{-1}[\mathbf{w}](\zeta')}[\mathbf{w}]^{-1}(\zeta)=\overline{Z}_{b}^{-1}[\mathbf{w}](\zeta')-\overline{Z}_{b}[\mathbf{w}]^{-1}(\zeta+\zeta'),
\end{align*}
where the second equality can be seen either by \eqref{ZZinv} with
$s=0$, $\zeta=\zeta'$, $\mathbf{u}(0,\cdot)=\mathbf{w}$, and $\kappa=\zeta$
or by a simple direct argument. Substituting this into \eqref{firstmanipfortransinvariance}
we obtain
\begin{align*}
\mathbb{E}F(\tau_{b-\overline{Z}_{b}[\hat{\mathbf{w}}]^{-1}(\zeta)}\hat{\mathbf{w}}) & =\lim_{M\to\infty}\frac{1}{M}\int_{0}^{M}\mathbb{E}[F(\tau_{b-\overline{Z}_{b}[\mathbf{w}]^{-1}(\zeta+\zeta')}\mathbf{w})]\,\dif\zeta'\\
 & =\lim_{M\to\infty}\frac{1}{M}\int_{\zeta}^{M+\zeta}\mathbb{E}[F(\tau_{b-\overline{Z}_{b}[\mathbf{w}]^{-1}(\zeta')}\mathbf{w})]\,\dif\zeta'=\mathbb{E}F(\hat{\mathbf{w}}),
\end{align*}
with the last equality again by \eqref{Evhatt}.
\end{proof}
Now we can prove \propref[s]{preservetildenu}~and~\ref{prop:convergetotildenu}.
\begin{proof}[Proof of \propref{preservetildenu}.]
The Birkhoff--Khinchin theorem, along with the assumed ordering of the components
of a function distributed according to $\nu$, implies that $\nu(\mathcal{X}_{\mathrm{BT}})=1$.
Then, since $\hat{\nu}^{[b]}$ is absolutely continuous with respect
to $\nu$, we have \eqref{nuhatXBtis1}. Then \eqref{nuhatinvariant}
holds by \propref{tiltedpreserved} applied with $\mu_{t}=\nu$ for
all $t$, since $\nu$ is an invariant measure for \eqref{SBE-many}.
\end{proof}
\begin{proof}[Proof of \propref{convergetotildenu}.]
We note that $(\hat{P}_{t}^{[b]})^{*}\delta_{a_{\mathrm{B}},a_{\mathrm{T}}}$
is ergodic with respect to the group $\mathbb{R}$ of spatial translations
due to the spatial ergodicity of the driving noise $V$. Therefore,
by \propref{tiltedpreserved} and the Birkhoff--Khinchin theorem,
$(\hat{P}_{t}^{[b]})^{*}\delta_{a_{\mathrm{B}},a_{\mathrm{T}}}$ is
absolutely continuous with respect to $P_{t}^{*}\delta_{a_{\mathrm{B}},a_{\mathrm{T}}}$
with Radon--Nikodym derivative
\[
\frac{\dif\left((\hat{P}_{t}^{[b]})^{*}\delta_{a_{\mathrm{B}},a_{\mathrm{T}}}\right)}{\dif(P_{t}^{*}\delta_{a_{\mathrm{B}},a_{\mathrm{T}}})}(w_{\mathrm{B}},w_{\mathrm{T}})=\frac{w_{\mathrm{T}}(b)-w_{\mathrm{B}}(b)}{a_{\mathrm{T}}-a_{\mathrm{B}}}.
\]
For any $F\in L^{\infty}(\mathcal{X}^{2})$, if $\hat{\mathbf{u}}^{[b]}(t,\cdot)\sim(\hat{P}_{t}^{[b]})^{*}\delta_{a_{\mathrm{B}},a_{\mathrm{T}}}$
and $\mathbf{u}(t,\cdot)=(u_{\mathrm{B}},u_{\mathrm{T}})(t,\cdot)\sim P_{t}^{*}\delta_{a_{\mathrm{B}},a_{\mathrm{T}}}$,
then we have by the definitions that
\begin{equation}
\mathbb{E}F(\hat{\mathbf{u}}^{[b]}(t,\cdot))=\mathbb{E}\left[F(\mathbf{u}(t,\cdot))\left(\frac{u_{\mathrm{T}}(t,b)-u_{\mathrm{B}}(t,b)}{a_{\mathrm{T}}-a_{\mathrm{B}}}\right)\right].\label{eq:EFuhat}
\end{equation}
By the $L^{2}$ bound proved as \cite[Lemma 5.3]{DGR19}, there is
a constant $C<\infty$ so that, for all $t\ge0$, we have
\[
\mathbb{E}\left(\frac{u_{\mathrm{T}}(t,b)-u_{\mathrm{B}}(t,b)}{a_{\mathrm{T}}-a_{\mathrm{B}}}\right)^{2}\le C.
\]
This means that the term inside the expectation on the right side
of \eqref{EFuhat} is uniformly integrable. Since $P_{t}^{*}\delta_{a_{\mathrm{B}},a_{\mathrm{T}}}$
converges to $\nu_{a_{\mathrm{B}},a_{\mathrm{T}}}$ (weakly with respect
to the topology of $\mathcal{X}^{2}$) by the stability result \cite[Theorem 1.3]{DGR19},
we have
  \begin{align*}
    \lim_{t\to\infty}F(\hat{\mathbf{u}}^{[b]}(t,\cdot))=\lim_{t\to\infty}\mathbb{E}\left[F(\mathbf{u}(t,\cdot))\left(\frac{u_{\mathrm{T}}(t,b)-u_{\mathrm{B}}(t,b)}{a_{\mathrm{T}}-a_{\mathrm{B}}}\right)\right]&=\mathbb{E}\left[F(\mathbf{v})\left(\frac{v_{\mathrm{T}}(b)-v_{\mathrm{B}}(b)}{a_{\mathrm{T}}-a_{\mathrm{B}}}\right)\right]\\&=\mathbb{E}F(\hat{\mathbf{v}}^{[b]}),
  \end{align*}
where $\mathbf{v}=(v_{\mathrm{B}},v_{\mathrm{T}})\sim\nu_{a_{\mathrm{B}},a_{\mathrm{T}}}$
and $\hat{\mathbf{v}}^{[b]}\sim\hat{\nu}_{a_{\mathrm{B}},a_{\mathrm{T}}}^{[b]}$.
Hence, $(\hat{P}_{t}^{[b]})^{*}\delta_{a_{\mathrm{B}},a_{\mathrm{T}}}$
converges weakly to $\hat{\nu}_{a_{\mathrm{B}},a_{\mathrm{T}}}^{[b]}$
with respect to the topology of $\mathcal{X}^{2}$.

It remains to show that in fact $(\hat{P}_{t}^{[b]})^{*}\delta_{a_{\mathrm{B}},a_{\mathrm{T}}}$
converges weakly to $\hat{\nu}_{a_{\mathrm{B}},a_{\mathrm{T}}}^{[b]}$
with respect to the topology of $\mathcal{X}_{\mathrm{BT}}$. If $F$
is a bounded Lipschitz function on $\mathcal{X}_{\mathrm{BT}}$, then
$F$ is in particular uniformly continuous, so it can be extended
to a bounded continuous function on the closure of $\mathcal{X}_{\mathrm{BT}}$
in $\mathcal{X}^{2}$, and hence by the Tietze extension theorem to
a bounded continuous function on $\mathcal{X}^{2}$. Then the argument
of the previous paragraph applies, and by the portmanteau lemma this
completes the proof.
\end{proof}

\subsection*{Proof of the Feller property}

Now we prove the Feller property \propref{Phatfeller}.
\begin{proof}[Proof of \propref{Phatfeller}.]
We recall that by \cite[Theorem 1.1]{DGR19}, the solution map $\Psi:\mathcal{X}_{\mathrm{BT}}\times\mathcal{X}^{N}\to\mathcal{C}([0,\infty);\mathcal{X}_{\mathrm{BT}}\times\mathcal{X}^{N})$
is continuous with probability $1$, if the target space is given
the topology of uniform convergence on compact subsets of $[0,\infty)$.
Thus, to show that $\Phi$ is continuous it suffices to show that
the map $\mathcal{X}_{\mathrm{BT}}\ni\mathbf{v}\mapsto(Z_{b,t}[\mathbf{v}]^{-1}(0))_{t\ge0}\in\mathcal{C}([0,\infty))$
is continuous with probability $1$, where $\mathcal{C}([0,\infty))$
is similarly given the topology of uniform convergence on compact
sets. This could be proved using the ODE \eqref{bteqn}, but we will
instead argue using the KPZ equation and the formulas of the previous
section.

Let $\mathbf{v}\in\mathcal{X}_{\mathrm{BT}}$ and let $\mathbf{u}\in\mathcal{C}([0,\infty);\mathcal{X}_{\mathrm{BT}})$
solve \eqref{SBE-many} with initial condition $\mathbf{u}(0,\cdot)=\mathbf{v}$.
Observe that, if $(h_{\mathrm{B}},h_{\mathrm{T}})$ solves \eqref{KPZ}
with initial condition
\[
(h_{\mathrm{B}},h_{\mathrm{T}})(0,x)=\int_{b}^{x}(v_{\mathrm{B}},v_{\mathrm{T}})(y)\,\dif y,
\]
so that
\begin{equation}
Z_{b,t}[\mathbf{v}](x)=\frac{1}{2}[h_{\mathrm{T}}-h_{\mathrm{B}}](t,x)=Z_{b,t}[\mathbf{v}](b)+\frac{1}{2}\int_{b}^{x}[u_{\mathrm{T}}-u_{\mathrm{B}}](t,y)\,\dif y,\label{eq:Zbtdecomp}
\end{equation}
then $Z_{b,t}[\mathbf{v}]$ satisfies the ODE
\begin{align}
\partial_{t}Z_{b,t}[\mathbf{v}](b) & =\frac{1}{2}\partial_{x}^{2}[h_{\mathrm{T}}-h_{\mathrm{B}}](t,b)-\frac{1}{2}\partial_{x}[h_{\mathrm{T}}-h_{\mathrm{B}}](t,b)\cdot\partial_{x}[h_{\mathrm{T}}+h_{\mathrm{B}}](t,b)\nonumber \\
 & =\frac{1}{2}\partial_{x}(u_{\mathrm{T}}-u_{\mathrm{B}})(t,b)-\frac{1}{2}(u_{\mathrm{T}}-u_{\mathrm{B}})(t,b)\cdot(u_{\mathrm{T}}+u_{\mathrm{B}})(t,b).\label{eq:dtZ}
\end{align}
Fix a smooth, compactly supported function $\varphi$ on $\mathbb{R}$
such that $\int\varphi=1$ and define
\begin{equation}
Q_{t}[\mathbf{v}]=\int_{\mathbb{R}}Z_{b,t}[\mathbf{v}](x)\varphi(x-b)\,\dif x.\label{eq:Qtdef}
\end{equation}
Integrating \eqref{dtZ} in time and against $\varphi(\cdot-b)$ in
space, and integrating by parts, we obtain
\begin{equation}
Q_{t}[\mathbf{v}]=\frac{1}{2}\int_{0}^{t}\int_{\mathbb{R}}(u_{\mathrm{T}}-u_{\mathrm{B}})(s,x)[-\varphi'(x-b)-(u_{\mathrm{T}}+u_{\mathrm{B}})(s,x)\varphi(x-b)]\,\dif x\,\dif s.\label{eq:QtIBP}
\end{equation}
On the other hand, using \eqref{Zbtdecomp} in \eqref{Qtdef} we can
also write
\begin{align}
Q_{t}[\mathbf{v}] & =\int_{\mathbb{R}}\left(Z_{b,t}[\mathbf{v}](b)+\frac{1}{2}\int_{b}^{x}[u_{\mathrm{T}}-u_{\mathrm{B}}](t,y)\,\dif y\right)\varphi(x-b)\,\dif x\nonumber \\
 & =Z_{b,t}[\mathbf{v}](b)+\frac{1}{2}\int_{\mathbb{R}}\int_{b}^{x}[u_{\mathrm{T}}-u_{\mathrm{B}}](t,y)\varphi(x-b)\,\dif y\,\dif x.\label{eq:Qtintermediate}
\end{align}
Combining \eqref{QtIBP} and \eqref{Qtintermediate} gives
\begin{align*}
Z_{b,t}[\mathbf{v}](b) & =\frac{1}{2}\int_{0}^{t}\int_{\mathbb{R}}(u_{\mathrm{T}}-u_{\mathrm{B}})(s,x)[-\varphi'(x-b)-(u_{\mathrm{T}}+u_{\mathrm{B}})(s,x)\varphi(x-b)]\,\dif x\,\dif s\\
 & \qquad-\frac{1}{2}\int_{\mathbb{R}}\int_{b}^{x}[u_{\mathrm{T}}-u_{\mathrm{B}}](t,y)\varphi(x-b)\,\dif y\,\dif x.
\end{align*}
By this and \cite[Theorem 1.1]{DGR19}, the map $\mathbf{v}\mapsto(Z_{b,t}[\mathbf{v}](b))_{t\ge0}$
is almost-surely continuous.

Now we can write, by \eqref{Zbtdecomp}, that
\[
Z_{b,t}[\mathbf{v}]^{-1}(0)=\overline{Z}_{b}[\mathbf{u}(t,\cdot)]^{-1}(-Z_{b,t}[\mathbf{v}](b)).
\]
By \lemref{inversionthing} and \cite[Theorem 1.1]{DGR19}, this implies
that $\mathbf{v}\mapsto(Z_{b,t}[\mathbf{v}]^{-1}(0))_{t\ge0}$ is
almost-surely continuous. Therefore, $\Phi$ is almost-surely continuous.
Now if $F\in\mathcal{C}_{\mathrm{b}}(\mathcal{X}_{\mathrm{BT}}\times\mathcal{X}^{N})$
and~$\mathbf{v}_{n}\to\mathbf{v}$ in~$\mathcal{X}_{\mathrm{BT}}\times\mathcal{X}^{N}$
then
\[
\hat{P}_{t}^{[b]}F(\mathbf{v}_{n})=\mathbb{E}F(\Phi(\mathbf{v}_{n})(t,\cdot))=\mathbb{E}F(\Phi(\mathbf{v})(t,\cdot))=\hat{P}_{t}^{[b]}F(\mathbf{v})
\]
by the bounded convergence theorem. This proves that $\hat{P}_{t}^{[b]}$
is Feller.
\end{proof}

\section{Uniqueness of the stationary shock profiles\label{sec:uniquenessofshockprofiles}}

In this section we show that shock profiles of the form \eqref{shockprototype}
are the only ``stationary'' shock profiles that satisfy a certain
integrability condition. We define this integrability condition through
the space
\begin{equation}
\mathcal{X}_{\mathrm{Sh}}=\left\{ (v_{\mathrm{B}},v_{\mathrm{T}},v)\in\mathcal{X}_{\mathrm{BT}}\times\mathcal{X}\ :\ \int_{-\infty}^{0}|v_{\mathrm{T}}-v|+\int_{0}^{\infty}|v-v_{\mathrm{B}}|<\infty\right\} ,\label{eq:XShdef}
\end{equation}
as previously given in \eqref{XShdefbis2}.
This is a space of viscous shock fronts. As in the previous sections,
$v_{\mathrm{B}}$ and $v_{\mathrm{T}}$ are the ``bottom'' and ``top''
solutions, respectively, while $v$ is a viscous shock. Note that
for any $(v_{\mathrm{B}},v_{\mathrm{T}})\in\mathcal{X}_{\mathrm{BT}}$
and $b,\gamma\in\mathbb{R}$, we have $\mathscr{S}_{b,\gamma}[v_{\mathrm{B}},v_{\mathrm{T}}]\in\mathcal{X}_{\mathrm{Sh}}$.

Next, we need a way to track the location of a moving shock. We define
\[
\mathcal{X}_{\mathrm{Sh},b,\gamma}=\left\{ (v_{\mathrm{B}},v_{\mathrm{T}},v)\in\mathcal{X}_{\mathrm{Sh}}\ :\ \int_{-\infty}^{b}[v-v_{\mathrm{T}}]+\int_{b}^{\infty}[v-v_{\mathrm{B}}]=\gamma\right\} .
\]
Observe that (recalling the definition \eqref{XBTdef}), for any $(v_{\mathrm{B}},v_{\mathrm{T}},v)\in\mathcal{X}_{\mathrm{Sh}}$,
the map
\[
I(c)=\int_{-\infty}^{c}[v-v_{\mathrm{T}}]+\int_{c}^{\infty}[v-v_{\mathrm{B}}]
\]
is decreasing and, moreover,
\[
\lim_{c\to\pm\infty}I(c)=\mp\infty.
\]
Therefore, for each fixed $b\in\mathbb{R}$, we have
\[
\mathcal{X}_{\mathrm{Sh}}=\bigsqcup_{\gamma\in\mathbb{R}}\mathcal{X}_{\mathrm{Sh},b,\gamma},
\]
and for each fixed $\gamma\in\mathbb{R}$, we have
\[
\mathcal{X}_{\mathrm{Sh}}=\bigsqcup_{b\in\mathbb{R}}\mathcal{X}_{\mathrm{Sh},b,\gamma},
\]
where $\bigsqcup$ denotes disjoint union.

We now show that the shocks \eqref{shockprototype} lie in the corresponding
$\mathcal{X}_{\mathrm{Sh},b,\gamma}$.
\begin{lem}
\label{lem:shocksinXshb}We have $(v_{\mathrm{B}},v_{\mathrm{T}},\mathscr{S}_{b,\gamma}[v_{\mathrm{B}},v_{\mathrm{T}}])\in\mathcal{X}_{\mathrm{Sh},b,\gamma}$
for any $(v_{\mathrm{B}},v_{\mathrm{T}})\in\mathcal{X}_{\mathrm{BT}}$
and any $b,\gamma\in\mathbb{R}$. 
\end{lem}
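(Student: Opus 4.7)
The key observation is that the shock profile $\mathscr{S}_{b,\gamma}[v_{\mathrm{B}},v_{\mathrm{T}}]$ is a convex combination of $v_{\mathrm{B}}$ and $v_{\mathrm{T}}$ with weights depending only on $\zeta(x)\coloneqq 2\overline{Z}_{b}[v_{\mathrm{B}},v_{\mathrm{T}}](x)=\int_{b}^{x}[v_{\mathrm{T}}-v_{\mathrm{B}}](y)\,\dif y$. First I would write, using the identity $\frac{1}{1+\e^{\gamma-\zeta}}+\frac{1}{1+\e^{\zeta-\gamma}}=1$, the two differences
\begin{equation*}
\mathscr{S}_{b,\gamma}[v_{\mathrm{B}},v_{\mathrm{T}}]-v_{\mathrm{T}}=-\frac{v_{\mathrm{T}}-v_{\mathrm{B}}}{1+\e^{\gamma-\zeta}},\qquad \mathscr{S}_{b,\gamma}[v_{\mathrm{B}},v_{\mathrm{T}}]-v_{\mathrm{B}}=\frac{v_{\mathrm{T}}-v_{\mathrm{B}}}{1+\e^{\zeta-\gamma}}.
\end{equation*}

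Since $(v_{\mathrm{B}},v_{\mathrm{T}})\in\mathcal{X}_{\mathrm{BT}}$, the function $\zeta\colon\mathbb{R}\to\mathbb{R}$ is a strictly increasing $\mathcal{C}^{1}$-bijection with $\zeta(b)=0$, $\zeta(\pm\infty)=\pm\infty$. The plan is to perform the change of variables $w=\zeta(x)-\gamma$, under which $\dif w=(v_{\mathrm{T}}-v_{\mathrm{B}})(x)\,\dif x$, to reduce the integrals appearing in the definitions of $\mathcal{X}_{\mathrm{Sh}}$ and $\mathcal{X}_{\mathrm{Sh},b,\gamma}$ to elementary one-variable integrals of the logistic-type kernel $(1+\e^{\pm w})^{-1}$. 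The $\mathcal{X}_{\mathrm{Sh}}$-integrability follows immediately, since the change of variable converts $\int_{-\infty}^{0}|v_{\mathrm{T}}-\mathscr{S}_{b,\gamma}|$ and $\int_{0}^{\infty}|\mathscr{S}_{b,\gamma}-v_{\mathrm{B}}|$ into integrals over half-lines of $(1+\e^{-w})^{-1}$ and $(1+\e^{w})^{-1}$, respectively, both of which decay exponentially at the relevant infinity.

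For the identification of the parameter $\gamma$, I would then split the integral at $x=b$ (where $\zeta(b)=0$, hence $w=-\gamma$) and compute, using the antiderivative $\int(1+\e^{w})^{-1}\,\dif w=-\log(1+\e^{-w})+C$,
\begin{equation*}
\int_{-\infty}^{b}[\mathscr{S}_{b,\gamma}-v_{\mathrm{T}}]\,\dif x=-\int_{-\infty}^{-\gamma}\frac{\dif w}{1+\e^{-w}}=-\log(1+\e^{-\gamma}),
\end{equation*}
\begin{equation*}
\int_{b}^{\infty}[\mathscr{S}_{b,\gamma}-v_{\mathrm{B}}]\,\dif x=\int_{-\gamma}^{\infty}\frac{\dif w}{1+\e^{w}}=\log(1+\e^{\gamma}).
\end{equation*}
Summing gives $\log\frac{1+\e^{\gamma}}{1+\e^{-\gamma}}=\gamma$, which is exactly the defining condition for membership in $\mathcal{X}_{\mathrm{Sh},b,\gamma}$.

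There is no real obstacle here; the computation is elementary once the convex-combination structure is recognized and the change of variables is set up. The only minor care needed is to ensure the $\mathcal{X}_{\mathrm{BT}}$ hypothesis is used to justify that $\zeta$ maps $\mathbb{R}$ bijectively onto $\mathbb{R}$, so that the substitution is valid on the full half-lines.
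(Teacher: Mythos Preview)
Your proof is correct and follows essentially the same approach as the paper: both express the differences $\mathscr{S}_{b,\gamma}-v_{\mathrm{T}}$ and $\mathscr{S}_{b,\gamma}-v_{\mathrm{B}}$ as $(v_{\mathrm{T}}-v_{\mathrm{B}})$ times a logistic weight, then perform the change of variables induced by $\overline{Z}_b[v_{\mathrm{B}},v_{\mathrm{T}}]$ to reduce the two half-line integrals to elementary computations yielding $-\log(1+\e^{-\gamma})$ and $\log(1+\e^{\gamma})$. The only cosmetic difference is your substitution $w=\zeta(x)-\gamma$ versus the paper's $\zeta=\overline{Z}_b[v_{\mathrm{B}},v_{\mathrm{T}}](x)$, and you explicitly note the $\mathcal{X}_{\mathrm{Sh}}$-integrability, which the paper leaves implicit.
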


\begin{proof}
By \eqref{Sbgammarewrite} and the change of variables
\[
\zeta=\overline{Z}_{b}[v_{\mathrm{B}},v_{\mathrm{T}}](x),\qquad\dif\zeta=\frac{1}{2}[v_{\mathrm{T}}-v_{\mathrm{B}}](x)\dif x,
\]
(similar to \eqref{chgvar-intro}), we have
\begin{align}
\int_{-\infty}^{b}(\mathscr{S}_{b,\gamma}[v_{\mathrm{B}},v_{\mathrm{T}}]-v_{\mathrm{T}})(x)\,\dif x & =\int_{-\infty}^{b}\frac{v_{\mathrm{B}}(x)-v_{\mathrm{T}}(x)}{1+\e^{\gamma-2\overline{Z}_{b}[v_{\mathrm{B}},v_{\mathrm{T}}](x)}}\,\dif x\nonumber \\
 & =-2\int_{-\infty}^{0}\frac{1}{1+\e^{\gamma-2\zeta}}\,\dif\zeta=-\log(1+\e^{-\gamma}).\label{eq:leftshock}
\end{align}
Similarly, we have
\begin{equation}
\int_{b}^{\infty}(\mathscr{S}_{b,\gamma}[v_{\mathrm{B}},v_{\mathrm{T}}]-v_{\mathrm{B}})(x)\,\dif x=\int_{b}^{\infty}\frac{v_{\mathrm{T}}(x)-v_{\mathrm{B}}(x)}{1+\e^{2\overline{Z}_{b}[v_{\mathrm{B}},v_{\mathrm{T}}](x)-\gamma}}\,\dif x=2\int_{0}^{\infty}\frac{1}{1+\e^{2\zeta-\gamma}\,\dif\zeta}=\log(1+\e^{\gamma}).\label{eq:rightshock}
\end{equation}
Adding \eqref{leftshock} and \eqref{rightshock} yields
\[
\int_{-\infty}^{b}(\mathscr{S}_{b,\gamma}[v_{\mathrm{B}},v_{\mathrm{T}}]-v_{\mathrm{T}})(x)\,\dif x+\int_{b}^{\infty}(\mathscr{S}_{b,\gamma}[v_{\mathrm{B}},v_{\mathrm{T}}]-v_{\mathrm{B}})(x)\,\dif x=\gamma,
\]
completing the proof.
\end{proof}
The following simple lemma gives an alternative characterization of
$\mathcal{X}_{\mathrm{Sh},b,\gamma}$.
\begin{lem}
\label{lem:Xshequivalence}We have the equivalence
\begin{equation}
(v_{\mathrm{B}},v_{\mathrm{T}},v)\in\mathcal{X}_{\mathrm{Sh},b,\gamma}\iff v-\mathscr{S}_{b,\gamma}[v_{\mathrm{B}},v_{\mathrm{T}}]\in L^{1}(\mathbb{R})\text{ and }\int_{\mathbb{R}}(v-\mathscr{S}_{b,\gamma}[v_{\mathrm{B}},v_{\mathrm{T}}])=0.\label{eq:XShequivalence}
\end{equation}
\end{lem}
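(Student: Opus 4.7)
The plan is to reduce the equivalence to straightforward bookkeeping on top of the explicit integral identities computed in the proof of \lemref{shocksinXshb}. The essential inputs are:
\begin{equation*}
\int_{-\infty}^{b}(\mathscr{S}_{b,\gamma}[v_{\mathrm B},v_{\mathrm T}]-v_{\mathrm T})=-\log(1+\e^{-\gamma}), \qquad \int_{b}^{\infty}(\mathscr{S}_{b,\gamma}[v_{\mathrm B},v_{\mathrm T}]-v_{\mathrm B})=\log(1+\e^{\gamma}),
\end{equation*}
together with the fact that both integrands have constant sign, so in particular $\mathscr{S}_{b,\gamma}[v_{\mathrm B},v_{\mathrm T}]-v_{\mathrm T}\in L^1(-\infty,b)$ and $\mathscr{S}_{b,\gamma}[v_{\mathrm B},v_{\mathrm T}]-v_{\mathrm B}\in L^1(b,\infty)$. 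Note also the algebraic identity $\log(1+\e^{\gamma})-\log(1+\e^{-\gamma})=\gamma$, which is the reason the two terms will cancel against $\gamma$.

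For the forward direction, I would assume $(v_{\mathrm B},v_{\mathrm T},v)\in\mathcal{X}_{\mathrm{Sh},b,\gamma}$. Since $v_{\mathrm T}$, $v$, and $v_{\mathrm B}$ are continuous, the $\mathcal{X}_{\mathrm{Sh}}$ integrability condition on $(-\infty,0)\cup(0,\infty)$ transfers to $(-\infty,b)\cup(b,\infty)$, so $v-v_{\mathrm T}\in L^1(-\infty,b)$ and $v-v_{\mathrm B}\in L^1(b,\infty)$. Writing
\[
v-\mathscr{S}_{b,\gamma}[v_{\mathrm B},v_{\mathrm T}]=(v-v_{\mathrm T})-(\mathscr{S}_{b,\gamma}[v_{\mathrm B},v_{\mathrm T}]-v_{\mathrm T}) \quad \text{on }(-\infty,b),
\]
and analogously on $(b,\infty)$ using $v_{\mathrm B}$, gives $v-\mathscr{S}_{b,\gamma}[v_{\mathrm B},v_{\mathrm T}]\in L^1(\mathbb R)$. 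Integrating these two expressions and combining with the displayed identities above yields
\[
\int_{\mathbb R}(v-\mathscr{S}_{b,\gamma}[v_{\mathrm B},v_{\mathrm T}])=\left(\int_{-\infty}^{b}(v-v_{\mathrm T})+\int_{b}^{\infty}(v-v_{\mathrm B})\right)+\log(1+\e^{-\gamma})-\log(1+\e^{\gamma})=\gamma-\gamma=0,
\]
as desired.

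For the reverse direction, I would run the same computation backward. Assuming $v-\mathscr{S}_{b,\gamma}[v_{\mathrm B},v_{\mathrm T}]\in L^1(\mathbb R)$, the decomposition
\[
v-v_{\mathrm T}=(v-\mathscr{S}_{b,\gamma}[v_{\mathrm B},v_{\mathrm T}])+(\mathscr{S}_{b,\gamma}[v_{\mathrm B},v_{\mathrm T}]-v_{\mathrm T})
\]
is in $L^1(-\infty,b)$, and symmetrically on the right, so $(v_{\mathrm B},v_{\mathrm T},v)\in\mathcal{X}_{\mathrm{Sh}}$. Splitting $\int_{\mathbb R}(v-\mathscr{S}_{b,\gamma}[v_{\mathrm B},v_{\mathrm T}])$ at $b$ and using the assumption that this integral vanishes then gives $\int_{-\infty}^{b}[v-v_{\mathrm T}]+\int_{b}^{\infty}[v-v_{\mathrm B}]=\gamma$, i.e.\ $(v_{\mathrm B},v_{\mathrm T},v)\in\mathcal{X}_{\mathrm{Sh},b,\gamma}$.

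I do not expect any serious obstacle; the argument is purely a bookkeeping exercise built on top of \eqref{leftshock}--\eqref{rightshock} and the cancellation $\log(1+\e^{\gamma})-\log(1+\e^{-\gamma})=\gamma$. The only subtlety worth verifying carefully is the passage from the $\mathcal{X}_{\mathrm{Sh}}$ integrability condition (formulated at $0$) to integrability of the relevant differences on $(-\infty,b)$ and $(b,\infty)$, which follows from boundedness of continuous functions on $[\min(0,b),\max(0,b)]$.
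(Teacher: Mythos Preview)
Your proposal is correct and takes essentially the same approach as the paper: both arguments split $\mathbb{R}$ at $b$, compare $v$ to $v_{\mathrm T}$ on the left and $v_{\mathrm B}$ on the right, and invoke the integral identities \eqref{leftshock}--\eqref{rightshock}. The only cosmetic difference is that the paper packages those identities via the conclusion of \lemref{shocksinXshb} (that $(v_{\mathrm B},v_{\mathrm T},\mathscr{S}_{b,\gamma}[v_{\mathrm B},v_{\mathrm T}])\in\mathcal{X}_{\mathrm{Sh},b,\gamma}$) and proves the slightly more general identity \eqref{differenceisdifferenceofgammas} first, whereas you use the explicit values $-\log(1+\e^{-\gamma})$ and $\log(1+\e^{\gamma})$ directly.
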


\begin{proof}
If $(v_{\mathrm{B}},v_{\mathrm{T}},v)\in\mathcal{X}_{\mathrm{Sh},b,\gamma}$
and $(v_{\mathrm{B}},v_{\mathrm{T}},\tilde{v})\in\mathcal{X}_{\mathrm{Sh},b,\tilde{\gamma}}$,
then
\begin{equation}
\int_{-\infty}^{\infty}[v-\tilde{v}](y)\,\dif y=\int_{-\infty}^{b}[v-v_{\mathrm{T}}]+\int_{-\infty}^{b}[v_{\mathrm{T}}-\tilde{v}]+\int_{b}^{\infty}[v-v_{\mathrm{B}}]+\int_{b}^{\infty}[v_{\mathrm{B}}-\tilde{v}]=\gamma-\tilde{\gamma}.\label{eq:differenceisdifferenceofgammas}
\end{equation}
Combining \eqref{differenceisdifferenceofgammas} and \lemref{shocksinXshb}
yields the ``$\implies$'' direction of \eqref{XShequivalence}.

On the other hand, if $v-\mathscr{S}_{b,\gamma}[v_{\mathrm{B}},v_{\mathrm{T}}]\in L^{1}(\mathbb{R})$,
then $(v_{\mathrm{B}},v_{\mathrm{T}},v)\in\mathcal{X}_{\mathrm{Sh}}$
since $\mathscr{S}_{b,\gamma}[v_{\mathrm{B}},v_{\mathrm{T}}]\in\mathcal{X}_{\mathrm{Sh}}$.
Thus the ``$\impliedby$'' direction of \eqref{XShequivalence}
follows immediately from the second equality in \eqref{differenceisdifferenceofgammas}.
\end{proof}
The next lemma shows that for an arbitrary shock, the shock location
follows the location $b_{t}$.
\begin{lem}
\label{lem:shockcentersmove}Suppose that $\mathbf{u}=(u_{\mathrm{B}},u_{\mathrm{T}},u)\in\mathcal{C}([0,\infty);\mathcal{X}_{\mathrm{BT}}\times\mathcal{X})$
is a solution to \eqref{SBE-many} such that $\mathbf{u}(0,\cdot)\in\mathcal{X}_{\mathrm{Sh},b,\gamma}$,
and let $(b_{t})_{t\ge0}$ solve with \eqref{bteqn} with initial
condition $b_{0}=b$. Then, with probability~$1$, for all $t\ge0$,
we have $\mathbf{u}(t,\cdot)\in\mathcal{X}_{\mathrm{Sh},b_{t},\gamma}$.
\end{lem}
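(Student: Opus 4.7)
The plan is to compare $u$ with the explicit shock profile constructed in \secref{chgvar}, namely the function
\[
u^{\star}(t,x)\coloneqq\mathscr{S}_{b_{t},\gamma}[(u_{\mathrm{B}},u_{\mathrm{T}})(t,\cdot)](x),
\]
and to deduce $\mathbf{u}(t,\cdot)\in\mathcal{X}_{\mathrm{Sh},b_{t},\gamma}$ from the $L^{1}$-contraction and integral-conservation properties of \eqref{SBE-many}. By \lemref{dynamicspreservelimits}, almost surely $(u_{\mathrm{B}},u_{\mathrm{T}})(t,\cdot)\in\mathcal{X}_{\mathrm{BT}}$ for all $t\ge0$, so $u^{\star}(t,\cdot)$ is well defined. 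By the identity \eqref{uisS} (whose derivation passes through the ODE \eqref{bteqn} satisfied by $b_t$), the triple $(u_{\mathrm{B}},u_{\mathrm{T}},u^{\star})$ is a solution of \eqref{SBE-many} with $N=3$ driven by the same noise as $\mathbf{u}$; \propref{Scts} combined with continuity of $t\mapsto b_{t}$ gives that $u^{\star}\in\mathcal{C}([0,\infty);\mathcal{X})$. Moreover \lemref{shocksinXshb} gives $(u_{\mathrm{B}},u_{\mathrm{T}},u^{\star})(t,\cdot)\in\mathcal{X}_{\mathrm{Sh},b_{t},\gamma}$ for every $t\ge0$.

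Now turn to the comparison of $u$ and $u^{\star}$. At $t=0$ we have $u(0,\cdot)=u^{\star}(0,\cdot)\bmod L^{1}$ in the sense of \lemref{Xshequivalence}: that is, $u(0,\cdot)-u^{\star}(0,\cdot)\in L^{1}(\mathbb{R})$ with zero integral, because $(u_{\mathrm{B}},u_{\mathrm{T}},u)(0,\cdot)\in\mathcal{X}_{\mathrm{Sh},b,\gamma}$ and $(u_{\mathrm{B}},u_{\mathrm{T}},u^{\star})(0,\cdot)\in\mathcal{X}_{\mathrm{Sh},b,\gamma}$ (noting $b_{0}=b$). Next, $u$ and $u^{\star}$ are both third components of solutions of \eqref{SBE-many} with the same first two components and the same noise $V$. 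Consequently their difference satisfies the conservative equation
\[
\partial_{t}(u-u^{\star})=\tfrac{1}{2}\partial_{x}^{2}(u-u^{\star})-\tfrac{1}{2}\partial_{x}\bigl((u+u^{\star})(u-u^{\star})\bigr),
\]
and the noise drops out. Cite \cite[Proposition~3.2]{DGR19} to conclude that $t\mapsto\|u(t,\cdot)-u^{\star}(t,\cdot)\|_{L^{1}(\mathbb{R})}$ is non-increasing, and \cite[Proposition~3.3]{DGR19} to conclude that $\int_{\mathbb{R}}[u(t,\cdot)-u^{\star}(t,\cdot)]$ is constant in $t$. Combining these, for every $t\ge 0$ almost surely $u(t,\cdot)-u^{\star}(t,\cdot)\in L^{1}(\mathbb{R})$ and $\int_{\mathbb{R}}(u-u^{\star})(t,\cdot)=0$.

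Finally, apply \lemref{Xshequivalence} to the triple $(u_{\mathrm{B}},u_{\mathrm{T}},u)(t,\cdot)$: since $u(t,\cdot)-\mathscr{S}_{b_{t},\gamma}[(u_{\mathrm{B}},u_{\mathrm{T}})(t,\cdot)]=u(t,\cdot)-u^{\star}(t,\cdot)\in L^{1}(\mathbb{R})$ has vanishing integral, we obtain $(u_{\mathrm{B}},u_{\mathrm{T}},u)(t,\cdot)\in\mathcal{X}_{\mathrm{Sh},b_{t},\gamma}$. Note that the null set on which any of the a.s.\ statements above fails can be taken to be $t$-independent by continuity of $\mathbf{u}$ and $t\mapsto b_{t}$, so the conclusion holds a.s.\ simultaneously for all $t\ge 0$.

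The only genuinely non-routine input is the invocation of the integral-conservation statement \cite[Proposition~3.3]{DGR19}, which handles the delicate issue that the two solutions being compared have non-trivial growth at spatial infinity; the weighted-space framework from \secref{stufffromthelastpaper} together with the $L^{1}$ hypothesis at $t=0$ are exactly what is needed to apply it. The rest is bookkeeping across \eqref{uisS}, \lemref{shocksinXshb}, and \lemref{Xshequivalence}.
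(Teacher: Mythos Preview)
Your proof is correct and follows essentially the same route as the paper's: define the explicit shock $u^{\star}=u_{\mathrm{explicit}}=\mathscr{S}_{b_t,\gamma}[(u_{\mathrm{B}},u_{\mathrm{T}})(t,\cdot)]$, use \eqref{uisS} to see it is a solution, invoke mass conservation \cite[Proposition~3.3]{DGR19} to preserve $\int(u-u^{\star})=0$, and then conclude via \lemref{Xshequivalence}. Your write-up is in fact slightly more careful than the paper's, since you separately invoke the $L^{1}$-contraction \cite[Proposition~3.2]{DGR19} to ensure $u(t,\cdot)-u^{\star}(t,\cdot)\in L^{1}(\mathbb{R})$ for $t>0$, which is needed for the \lemref{Xshequivalence} criterion but which the paper leaves implicit.
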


\begin{proof}
Define
\[
u_{\mathrm{explicit}}(t,x)=\mathscr{S}_{b_{t},\gamma}[(u_{\mathrm{B}},u_{\mathrm{T}})(t,\cdot)](x),
\]
so $(u_{\mathrm{B}},u_{\mathrm{T}},u,u_{\mathrm{explicit}})$ solves
\eqref{SBE-many} by \eqref{uisS}. Now by the mass conservation of
the Burgers dynamics (\cite[Proposition 3.3]{DGR19}) we have
\begin{align*}
0=\int_{\mathbb{R}}(u_{\mathrm{expicit}}(0,x)-u(0,x))\,\dif x & =\int_{\mathbb{R}}(u_{\mathrm{explicit}}(t,x)-u(t,x))\,\dif x\\
 & =\int_{\mathbb{R}}(\mathscr{S}_{b_{t},\gamma}[(u_{\mathrm{B}},u_{\mathrm{T}})(t,\cdot)](x)-u(t,x))\,\dif x.
\end{align*}
Thus \lemref{Xshequivalence} implies that $\mathbf{u}(t,\cdot)\in\mathcal{X}_{\mathrm{Sh},b_{t},\gamma}$
for all $t\ge0$.
\end{proof}
\begin{defn}
\label{def:stationaryshockprofile}Let $\mu$ be a probability measure
on $\mathcal{X}_{\mathrm{Sh}}$ and $b\in\mathbb{R}$. We say that
$\mu$ is the law of a \emph{stationary shock profile} with respect
to $b$ if $(\hat{P}_{t}^{[b]})^{*}\mu=\mu$.
\end{defn}

An immediate consequence of \eqref{uisS} and \propref{preservetildenu}
is that if $a_{\mathrm{B}}<a_{\mathrm{T}}$, $b\in\mathbb{R}$, and
$(v_{\mathrm{B}},v_{\mathrm{T}})\sim\hat{\nu}_{a_{\mathrm{B}},a_{\mathrm{T}}}^{[b]}$
(as defined in \eqref{chgmeasure-intro}), then for any $\gamma\in\mathbb{R}$,
$(v_{\mathrm{B}},v_{\mathrm{T}},\mathscr{S}_{b,\gamma}[v_{\mathrm{B}},v_{\mathrm{T}}])$
has the law of a stationary shock profile with respect to $b$. We
can also prove a partial converse of this property.
\begin{prop}
\label{prop:classifyssps}If $\mathbf{v}=(v_{\mathrm{B}},v_{\mathrm{T}},v)$
has the law of a stationary shock profile with respect to $b$, then
there is a random $\gamma\in\mathbb{R}$ so that $v=\mathscr{S}_{b,\gamma}[v_{\mathrm{B}},v_{\mathrm{T}}]$
almost surely.
\end{prop}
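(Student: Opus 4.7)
The plan is to construct, for each realization of $\mathbf{v}=(v_{\mathrm{B}},v_{\mathrm{T}},v)\sim\mu$, the unique value $\gamma(\mathbf{v})\in\mathbb{R}$ so that $\mathbf{v}\in\mathcal{X}_{\mathrm{Sh},b,\gamma(\mathbf{v})}$ (a measurable function of $\mathbf{v}$ by the disjoint decomposition $\mathcal{X}_{\mathrm{Sh}}=\bigsqcup_{\gamma}\mathcal{X}_{\mathrm{Sh},b,\gamma}$), and then show that $v=\mathscr{S}_{b,\gamma(\mathbf{v})}[v_{\mathrm{B}},v_{\mathrm{T}}]$ almost surely. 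The core of the argument is an $L^{1}$-contraction argument: the difference between $v$ and the corresponding explicit shock is integrable with vanishing mean, its $L^{1}(\mathbb{R})$ norm is a.s. non-increasing along the dynamics, and the stationarity hypothesis forces it to be a.s. constant in $t$; a strict contraction argument then reduces it to zero.

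In detail, fix a realization, write $\gamma=\gamma(\mathbf{v})$, and let $\mathbf{u}(t,\cdot)=\Psi(\mathbf{v})(t,\cdot)$ solve \eqref{SBE-many}, with $b_{t}$ solving \eqref{bteqn} and $b_{0}=b$. Define $u^{\mathrm{exp}}(t,x)\coloneqq\mathscr{S}_{b_{t},\gamma}[(u_{\mathrm{B}},u_{\mathrm{T}})(t,\cdot)](x)$; by \eqref{uisS}, $(u_{\mathrm{B}},u_{\mathrm{T}},u,u^{\mathrm{exp}})$ solves \eqref{SBE-many} with $N=4$, and by \lemref{shockcentersmove} the value $\gamma$ is preserved by the shock-frame dynamics. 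Put $W(t,\cdot)\coloneqq u(t,\cdot)-u^{\mathrm{exp}}(t,\cdot)$. Applying \lemref{Xshequivalence} to $\mathbf{v}$ gives $W(0,\cdot)\in L^{1}(\mathbb{R})$ with $\int W(0,\cdot)=0$, so \cite[Propositions 3.2 and 3.3]{DGR19} imply that $\int W(t,\cdot)=0$ for all $t\ge0$ while $t\mapsto\|W(t,\cdot)\|_{L^{1}}$ is a.s. non-increasing. Using the translation identity $\tau_{y}\mathscr{S}_{b,\gamma}[\cdot,\cdot]=\mathscr{S}_{b+y,\gamma}[\tau_{y}\cdot,\tau_{y}\cdot]$ from the start of \secref{BTsolns}, we have
\[
\tau_{b-b_{t}}u^{\mathrm{exp}}(t,\cdot)=\mathscr{S}_{b,\gamma}[\tau_{b-b_{t}}(u_{\mathrm{B}},u_{\mathrm{T}})(t,\cdot)],
\]
so $\|W(t,\cdot)\|_{L^{1}}=\|\tau_{b-b_{t}}W(t,\cdot)\|_{L^{1}}$ is a deterministic, time-independent function of $\tau_{b-b_{t}}(u_{\mathrm{B}},u_{\mathrm{T}},u)(t,\cdot)$ and the preserved quantity $\gamma$. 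Under stationarity of $\mu$, the law of $\tau_{b-b_{t}}(u_{\mathrm{B}},u_{\mathrm{T}},u)(t,\cdot)$ does not depend on $t$, hence neither does the law of $\|W(t,\cdot)\|_{L^{1}}$. A random function of $t$ that is a.s. monotone and of constant law in $t$ is a.s. constant in $t$, so $\|W(t,\cdot)\|_{L^{1}}$ is a.s. constant.

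The main obstacle is to upgrade this constancy to $W(0,\cdot)\equiv0$. For this I would appeal to a strict $L^{1}$ contraction based on the fact that $W$ solves the linear parabolic equation $\partial_{t}W=\tfrac{1}{2}[\partial_{x}^{2}W-\partial_{x}((u+u^{\mathrm{exp}})W)]$. Parabolic regularity makes $W(t,\cdot)$ and the drift $u+u^{\mathrm{exp}}$ smooth for $t>0$, and a direct integration by parts at any time $t>0$ where all zeros of $W(t,\cdot)$ are simple yields
\[
\frac{d}{dt}\|W(t,\cdot)\|_{L^{1}}=-\sum_{z\,:\,W(t,z)=0}|\partial_{x}W(t,z)|,
\]
which is strictly negative as soon as $W(t,\cdot)$ has a sign change. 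Suppose toward contradiction that $\mathbb{P}(W(0,\cdot)\not\equiv0)>0$; on this event $\|W(t,\cdot)\|_{L^{1}}$ is a positive constant, so $W(t,\cdot)\not\equiv0$ for every $t>0$ and, combined with $\int W(t,\cdot)=0$, must change sign. By classical Sturmian-type results for linear parabolic equations (in the spirit of Angenent's theorem on the discreteness of times at which a nontrivial solution admits a multiple zero), the above identity and its strict negativity hold for a.e. $t>0$. Integrating gives strict decrease of $\|W(t,\cdot)\|_{L^{1}}$, contradicting its constancy. Therefore $W(0,\cdot)\equiv0$ almost surely, which is precisely $v=\mathscr{S}_{b,\gamma(\mathbf{v})}[v_{\mathrm{B}},v_{\mathrm{T}}]$.
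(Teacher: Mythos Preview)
Your setup and reduction are exactly the paper's: define the random $\gamma$ via $\mathbf{v}\in\mathcal{X}_{\mathrm{Sh},b,\gamma}$, couple $u$ with the explicit shock $u^{\mathrm{exp}}=\mathscr{S}_{b_t,\gamma}[(u_\mathrm{B},u_\mathrm{T})(t,\cdot)]$, observe that $W=u-u^{\mathrm{exp}}$ has zero integral and nonincreasing $L^1$ norm, and use stationarity of $\tau_{b-b_t}\mathbf{u}(t,\cdot)$ to conclude that $\|W(t,\cdot)\|_{L^1}$ is almost surely constant in $t$. The divergence is only in the last step, going from ``$\|W\|_{L^1}$ constant, $\int W=0$'' to ``$W\equiv 0$''.

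The paper does not attempt to prove this directly. It invokes \cite[Proposition~3.9]{DGR19} (under hypothesis (H2$'$) there), which is a ready-made ordering result: if two solutions of \eqref{SBE-many} have $L^1$ difference that fails to strictly decrease, they are pointwise ordered. Ordering plus zero integral gives $W\equiv0$ in one line.

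Your alternative, deriving strict $L^1$ contraction from the zero-crossing identity
\[
\frac{d}{dt}\|W(t,\cdot)\|_{L^1}=-\sum_{z:\,W(t,z)=0}|\partial_xW(t,z)|
\]
and Angenent-type Sturm theory, is the right heuristic (and is in fact the mechanism underlying results like \cite[Proposition~3.9]{DGR19}), but as stated it has a genuine gap. Angenent's theorem is proved on bounded intervals with bounded coefficients; here you are on all of $\mathbb{R}$ with drift $u+u^{\mathrm{exp}}\in\mathcal{X}$ growing like $|x|^{1/2+}$, and you have no a priori control on the number of zeros of $W(t,\cdot)$, so neither the discreteness of multiple-zero times nor the convergence of the sum in your formula follows from the classical statement. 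Making this rigorous on $\mathbb{R}$ with unbounded coefficients is nontrivial and is precisely the work packaged into \cite[Proposition~3.9]{DGR19}. The simplest fix is to cite that proposition at this step, as the paper does; otherwise you would need to supply a self-contained Sturm/strict-contraction argument adapted to the present unbounded setting.
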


\begin{proof}
Let $\gamma$ be such that $\mathbf{v}\in\mathcal{X}_{\mathrm{Sh},b,\gamma}$
and $\mathbf{u}=(u_{\mathrm{B}},u_{\mathrm{T}},u,u_{\mathrm{explicit}})$
solve \eqref{SBE-many} with initial condition
\[
\mathbf{u}(0,\cdot)=(\mathbf{v},\mathscr{S}_{b,\gamma}[v_{\mathrm{B}},v_{\mathrm{T}}]).
\]
Then, in particular, if $(b_{t})_{t\ge0}$ solves \eqref{bteqn} with
initial condition $b_{0}=b$, then
\[
u_{\mathrm{explicit}}(t,x)=\mathscr{S}_{b_{t},\gamma}[(u_{\mathrm{B}},u_{\mathrm{T}})(t,\cdot)](x)
\]
by \eqref{uisS}. By the mass conservation of the Burgers equation
(\cite[Proposition 3.3]{DGR19}) we also know that
\begin{equation}
\int_{\mathbb{R}}(u-u_{\mathrm{explicit}})(t,x)\,\dif x=0\label{eq:differenceis0}
\end{equation}
for all $t\ge0$. Since $(v_{\mathrm{B}},v_{\mathrm{T}},v)$ is a
stationary shock profile, it follows that
\[
0=\frac{\dif}{\dif t}\int_{\mathbb{R}}|\tau_{-b_{t}}u(t,\cdot)-\tau_{-b_{t}}u_{\mathrm{explicit}}(t,\cdot)|=\frac{\dif}{\dif t}\int_{\mathbb{R}}|u(t,\cdot)-u_{\mathrm{explicit}}(t,\cdot)|.
\]
This allows us to use the ordering result proved in \cite[Proposition 3.9]{DGR19}
(using hypothesis (H2') there) which then implies that $u$ and $u_{\mathrm{explicit}}$
must be ordered almost surely. In light of \eqref{differenceis0},
this means that $v=\mathscr{S}_{b,\gamma}[v_{\mathrm{B}},v_{\mathrm{T}}]$
almost surely, as claimed.
\end{proof}

\section{Stability of the viscous shocks\label{sec:stability}}

In this section we study the stability of the viscous shocks \eqref{shockprototype}
and prove \thmref{shock-stability}. The proof follows a strategy,
based on ordering and $L^{1}$ contraction, similar to \cite{DGR19}.
We begin with a time-averaged result.
\begin{prop}
\label{prop:timeaveragedconvergence}Fix real numbers $a_{\mathrm{B}}<a_{\mathrm{T}}$
and $\gamma_{\mathrm{L}}<\gamma_{\mathrm{R}}$. Let $(u_{\mathrm{B}},u_{\mathrm{T}},u)\in\mathcal{C}([0,\infty);\mathcal{X}_{\mathrm{BT}}\times\mathcal{X})$
solve \eqref{SBE-many} with initial conditions satisfying $u_{\mathrm{Y}}(0,\cdot)\equiv a_{\mathrm{Y}}$
for $\mathrm{Y}\in\{\mathrm{B},\mathrm{T}\}$. Let $b,\gamma\in\mathbb{R}$
be such that $(a_{\mathrm{B}},a_{\mathrm{T}},u(0,\cdot))\in\mathcal{X}_{\mathrm{Sh},b,\gamma}$,
and let $(b_{t})_{t\ge0}$ solve \eqref{bteqn} with initial condition
$b_{0}=b$. Further asume that for all $x\in\mathbb{R}$, we have
\begin{equation}
\mathscr{S}_{b,\gamma_{\mathrm{L}}}[a_{\mathrm{B}},a_{\mathrm{T}}](x)\le u(0,x)\le\mathscr{S}_{b,\gamma_{\mathrm{R}}}[a_{\mathrm{B}},a_{\mathrm{T}}](x).\label{eq:betweentwoshocks}
\end{equation}
If $(w_{\mathrm{B}},w_{\mathrm{T}})\sim\hat{\nu}_{a_{\mathrm{B}},a_{\mathrm{T}}}^{[b]}$,
then we have
\[
\lim_{T\to\infty}\int_{1}^{T+1}\Law(\tau_{b-b_{t}}(u_{\mathrm{B}},u_{\mathrm{T}},u)(t,\cdot))=\Law(w_{\mathrm{B}},w_{\mathrm{T}},\mathscr{S}_{b,\gamma}[w_{\mathrm{B}},w_{\mathrm{T}}])
\]
weakly with respect to the topology of $\mathcal{X}_{\mathrm{BT}}\times\mathcal{X}$.
\end{prop}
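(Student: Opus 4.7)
The plan is to establish tightness of the time-averaged measures
\[
\mu_T := \frac{1}{T}\int_1^{T+1}\Law(\tau_{b-b_t}(u_\B,u_\T,u)(t,\cdot))\,\dif t,
\]
extract any weak subsequential limit $\mu_\infty$ along some $T_n\to\infty$, show that every such $\mu_\infty$ equals $\Law(w_\B,w_\T,\mathscr{S}_{b,\gamma}[w_\B,w_\T])$ with $(w_\B,w_\T)\sim\hat\nu^{[b]}_{a_\B,a_\T}$, and conclude convergence by uniqueness of the limit. Tightness of the first two marginals follows from \propref{convergetotildenu}, which gives $\Law(\tau_{b-b_t}(u_\B,u_\T)(t,\cdot))\to\hat\nu^{[b]}_{a_\B,a_\T}$ and hence tightness of the time averages in $\mathcal{X}_{\mathrm{BT}}$. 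For the third marginal, the comparison principle applied to \eqref{SBE-many}, together with \eqref{betweentwoshocks} and \eqref{uisS}, yields almost surely and for all $t\ge 0$ the pointwise sandwich
\[
\mathscr{S}_{b_t,\gamma_\mathrm{L}}[(u_\B,u_\T)(t,\cdot)] \le u(t,\cdot) \le \mathscr{S}_{b_t,\gamma_\mathrm{R}}[(u_\B,u_\T)(t,\cdot)].
\]
After shifting by $b-b_t$, the third component is trapped between two continuous (by \propref{Scts}) functionals of the already-tight first two components, so $\{\mu_T\}_{T\ge 1}$ is tight in $\mathcal{X}_{\mathrm{BT}}\times\mathcal{X}$.

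Next, I show any weak subsequential limit $\mu_\infty$ is invariant under $\hat P_s^{[b]}$ for every $s\ge 0$. By the Feller property \propref{Phatfeller}, $\hat P_s^{[b]}F$ is bounded continuous for every bounded continuous $F$. Using the Markov property of $\{\tau_{b-b_t}\mathbf{u}(t,\cdot)\}_{t\ge 0}$ under $\hat P^{[b]}$ (a consequence of the strong Markov property of \eqref{SBE-many} and the translation invariance of the noise), a direct change-of-variable calculation gives
\[
\left|\int (\hat P_s^{[b]}F)\,\dif\mu_{T_n} - \int F\,\dif\mu_{T_n}\right| \le \frac{2s\|F\|_\infty}{T_n} \to 0,
\]
so $\mu_\infty$ is $\hat P_s^{[b]}$-invariant. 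The $(v_\B,v_\T)$-marginal of $\mu_\infty$ is $\hat\nu^{[b]}_{a_\B,a_\T}$ by \propref{convergetotildenu}. To invoke \propref{classifyssps}, I verify that $\mu_\infty(\mathcal{X}_{\mathrm{Sh}})=1$: under each $\mu_{T_n}$ the sandwich above provides the pointwise bound
\[
0\le v - \mathscr{S}_{b,\gamma_\mathrm{L}}[v_\B,v_\T] \le \mathscr{S}_{b,\gamma_\mathrm{R}}[v_\B,v_\T] - \mathscr{S}_{b,\gamma_\mathrm{L}}[v_\B,v_\T],
\]
whose total $L^1$ mass is $\gamma_\mathrm{R}-\gamma_\mathrm{L}$ by \propref{L1explicit}. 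Via Skorokhod coupling together with \appendixref{techlemma}, this dominated bound passes to $\mu_\infty$, so $\mu_\infty(\mathcal{X}_{\mathrm{Sh}})=1$; \propref{classifyssps} then yields $v = \mathscr{S}_{b,\gamma'}[v_\B,v_\T]$ for some random $\gamma'$ under $\mu_\infty$.

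Finally, I pin $\gamma'$ to $\gamma$. \lemref{shockcentersmove} gives $\tau_{b-b_t}(u_\B,u_\T,u)(t,\cdot) \in \mathcal{X}_{\mathrm{Sh},b,\gamma}$ for all $t$, so \lemref{Xshequivalence} produces the deterministic identity $\int (v - \mathscr{S}_{b,\gamma}[v_\B,v_\T]) = 0$ under each $\mu_{T_n}$. The same uniform $L^1$ envelope (the hypothesis forces $\gamma_\mathrm{L}\le\gamma\le\gamma_\mathrm{R}$, so $\mathscr{S}_{b,\gamma}[v_\B,v_\T]$ lies in the sandwich as well) provides the uniform integrability required to pass this identity to $\mu_\infty$, whence $\int (\mathscr{S}_{b,\gamma'}[v_\B,v_\T] - \mathscr{S}_{b,\gamma}[v_\B,v_\T])=0$; by \propref{L1explicit} this forces $\gamma'=\gamma$ almost surely. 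The main obstacle throughout is this $L^1$ limit passage, since weak convergence on $\mathcal{X}_{\mathrm{BT}}\times\mathcal{X}$ does not by itself control behavior at spatial infinity; the uniform sandwich between two explicit shock profiles of fixed total mass, \propref{L1explicit}, and the technical \appendixref{techlemma} should together make the passage rigorous.
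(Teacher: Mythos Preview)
Your overall strategy---tightness, Krylov--Bogolyubov invariance, then \propref{classifyssps} followed by pinning $\gamma$---is exactly the paper's. Two technical points deserve attention. First, your tightness argument for the third component is incomplete: being sandwiched between continuous functionals of a tight pair $(v_\B,v_\T)$ gives pointwise bounds but no equicontinuity, so tightness in $\mathcal{X}$ does not follow from the sandwich alone. The paper instead uses the cruder bound $u_\B\le u\le u_\T$ together with the parabolic regularity result \cite[Proposition~2.2]{DGR19}, which upgrades uniform-in-probability bounds for $t\ge0$ to tightness for $t\ge1$. Second, for pinning $\gamma$ the paper takes a slightly cleaner route requiring only Fatou: $L^1$ contraction plus Fatou give $\|w-\mathscr{S}_{b,\gamma_\mathrm{X}}[w_\B,w_\T]\|_{L^1}\le|\gamma-\gamma_\mathrm{X}|$ for $\mathrm{X}\in\{\mathrm{L},\mathrm{R}\}$, while \propref{classifyssps} makes these equal to $|\tilde\gamma-\gamma_\mathrm{X}|$, squeezing $\tilde\gamma=\gamma$. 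Your variant---passing the signed identity $\int(v-\mathscr{S}_{b,\gamma}[v_\B,v_\T])=0$ to the limit---also works, but it needs a generalized dominated convergence argument (Scheff\'e on the envelope $\mathscr{S}_{b,\gamma_\mathrm{R}}-\mathscr{S}_{b,\gamma_\mathrm{L}}$, whose $L^1$ mass is constant along the Skorokhod sequence by \propref{L1explicit}), not \appendixref{techlemma}, which concerns continuity of function inversion and plays no role here.
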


\begin{proof}
Let us define $\gamma_{\mathrm{C}}=\gamma$ for simplicity of notation
later on. Consider the joint families 
\[
\tilde{\mathbf{u}}=(u_{\mathrm{B}},u_{\mathrm{T}},u)\in\mathcal{C}([0,\infty);\mathcal{X}_{\mathrm{BT}}\times\mathcal{X})
\]
and
\[
\mathbf{u}=(\mathbf{u}_{\mathrm{explicit}},u)=((u_{\mathrm{B}},u_{\mathrm{T}},u_{\mathrm{L}},u_{\mathrm{C}},u_{\mathrm{R}}),u)\in\mathcal{C}([0,\infty);\mathcal{X}_{\mathrm{BT}}\times\mathcal{X}^{4})
\]
solving \eqref{SBE-many} with initial conditions
\[
u_{\mathrm{X}}(0,\cdot)=\mathscr{S}_{b,\gamma_{\mathrm{X}}}[v_{\mathrm{B}},v_{\mathrm{T}}]
\]
for $\mathrm{X}\in\{\mathrm{L},\mathrm{C},\mathrm{R}\}$. We note
that
\begin{equation}
\tau_{b-b_{t}}u_{\mathrm{X}}(t,\cdot)=\mathscr{S}_{b,\gamma_{\mathrm{X}}}[\tau_{b-b_{t}}(u_{\mathrm{B}},u_{\mathrm{T}})(t,\cdot)]\label{eq:preservestuff}
\end{equation}
for $\mathrm{X}\in\{\mathrm{L},\mathrm{C},\mathrm{R}\}$. Also, the
comparison principle and \eqref{betweentwoshocks} imply that
\[
u_{\mathrm{L}}(t,x)\le u(t,x)\le u_{\mathrm{R}}(t,x),\qquad\text{for all \ensuremath{t\ge0} and \ensuremath{x\in\mathbb{R}}}
\]
and
\[
u_{\mathrm{L}}(t,x)\le u_{\mathrm{C}}(t,x)\le u_{\mathrm{R}}(t,x),\qquad\text{for all \ensuremath{t\ge0} and \ensuremath{x\in\mathbb{R}}.}
\]
In addition, by \lemref[s]{Xshequivalence}~and~\ref{lem:shockcentersmove},
we have
\begin{equation}
\int_{\mathbb{R}}[u-u_{\mathrm{C}}](t,x)\,\dif x=0.\label{eq:intwithCiszero}
\end{equation}
Therefore, we have, for $\mathrm{X}\in\{\mathrm{L},\mathrm{R}\}$,
that
\[
\|\tau_{b-b_{t}}[u-u_{\mathrm{X}}](t,\cdot)\|_{L^{1}(\mathbb{R})}=\left|\int_{\mathbb{R}}[u-u_{\mathrm{X}}](t,x)\,\dif x\right|=\left|\int_{\mathbb{R}}[u_{\mathrm{C}}-u_{\mathrm{X}}](t,\cdot)\right|=|\gamma-\gamma_{\mathrm{X}}|,
\]
with the second equality by \eqref{intwithCiszero} and the third
by \propref{L1explicit}. 

We claim that the family $\{\tau_{b-b_{t}}\mathbf{u}(t,\cdot)\}_{t\ge1}$
is tight in $\mathcal{X}_{\mathrm{BT}}\times\mathcal{X}$. Indeed,
by \propref{convergetotildenu}, the family $\{\tau_{b-b_{t}}(u_{\mathrm{B}},u_{\mathrm{T}})(t,\cdot)\}_{t\ge0}$
converges in law with respect to the topology of $\mathcal{X}_{\mathrm{BT}}$
as $t\to\infty$, so in particular by Prokhorov's theorem (which applies
since $\mathcal{X}_{\mathrm{BT}}$ is a Polish space as proved in
\lemref{XBTPolish}) this family is tight in $\mathcal{X}_{\mathrm{BT}}$.
By the comparison principle (\cite[Proposition 3.1]{DGR19} we have
\[
u_{\mathrm{B}}(t,x)\le u(t,x)\le u_{\mathrm{T}}(t,x)
\]
for all $t\ge0$ and $x\in\mathbb{R}$, so the family $\{\tau_{b-b_{t}}u(t,\cdot)\}_{t\ge0}$
is uniformly bounded in probability in $\mathcal{X}$. Then \cite[Proposition 2.2]{DGR19}
this implies that $\{\tau_{b-b_{t}}u(t,\cdot)\}_{t\ge1}$ is tight
in $\mathcal{X}$. Therefore, $\{\tau_{b-b_{t}}\mathbf{u}(t,\cdot)\}_{t\ge1}$
is tight in the topology of $\mathcal{X}_{\mathrm{BT}}\times\mathcal{X}^{4}.$

Now let $T_{k}\uparrow\infty$ be a sequence so that
\[
\mu=\lim_{k\to\infty}\frac{1}{T_{k}}\int_{1}^{T_{k+1}}\Law(\tau_{b-b_{t}}\mathbf{u}(t,\cdot))\,\dif t
\]
exists in the sense of weak convergence of probability measures on
$\mathcal{X}_{\mathrm{BT}}$. Consider
\[
\mathbf{w}=(w_{\mathrm{B}},w_{\mathrm{T}},w_{\mathrm{L}},w_{\mathrm{C}},w_{\mathrm{R}})\sim\mu,
\]
and $\tilde{\mathbf{w}}=(w_{\mathrm{B}},w_{\mathrm{T}},w)$. By \eqref{preservestuff}
and \propref{Scts}, we have $w_{\mathrm{X}}=\mathscr{S}_{b,\gamma_{X}}[w_{\mathrm{B}},w_{\mathrm{T}}]$
for $\mathrm{X}\in\{\mathrm{L},\mathrm{C},\mathrm{R}\}$ almost surely.
By the Skorokhod representation theorem, Fatou's lemma, and the
$L^{1}(\mathbb{R})$ contraction property of the Burgers equation
as stated in \cite[Proposition 3.2]{DGR19}, we therefore have
\begin{equation}
\|w-w_{\mathrm{C}}\|_{L^{1}(\mathbb{R})}\le\|(u-u_{\mathrm{C}})(0,\cdot)\|_{L^{1}(\mathbb{R})}<\infty\label{eq:wminuswC}
\end{equation}
almost surely. Similarly, for $\mathrm{X}\in\{\mathrm{L},\mathrm{R}\}$,
we have
\begin{equation}
\|w-w_{\mathrm{X}}\|_{L^{1}(\mathbb{R})}\le\|(u-u_{\mathrm{X}})(0,\cdot)\|_{L^{1}(\mathbb{R})}=|\gamma_{\mathrm{X}}-\gamma|\label{eq:distancetwosides}
\end{equation}
almost surely. We see from \eqref{wminuswC} that $(w_{\mathrm{B}},w_{\mathrm{T}},w)\in\mathcal{X}_{\mathrm{Sh}}$
almost surely. Moreover, the Krylov--Bogolyubov theorem (see e.g.
\cite[Theorem 3.1.1]{DPZ96}) tells us that
\[
(\hat{P}_{t}^{[b]})^{*}\Law(\tilde{\mathbf{w}})=\Law(\tilde{\mathbf{w}})\qquad\text{for any \ensuremath{t\ge0}.}
\]
Therefore, $\tilde{\mathbf{w}}$ is a stationary shock profile in
the sense of \defref{stationaryshockprofile}. By \propref{classifyssps},
there is a random $\tilde{\gamma}\in\mathbb{R}$ so that $w\in\mathscr{S}_{b,\tilde{\gamma}}[w_{\mathrm{B}},w_{\mathrm{T}}]$
with probability $1$. This means that $\|w-w_{\mathrm{X}}\|_{L^{1}(\mathbb{R})}=|\gamma_{\mathrm{X}}-\tilde{\gamma}|$
for $\mathrm{X}\in\{\mathrm{L},\mathrm{R}\}$. Combined with \eqref{distancetwosides},
this means that $\tilde{\gamma}=\gamma$ almost surely. This uniquely
identifies~$\mu$. Since the topology of weak convergence
of probability measures is metrizable, we therefore have
\[
\lim_{T\to\infty}\frac{1}{T}\int_{1}^{T+1}\Law(\tau_{b-b_{t}}\tilde{\mathbf{u}}(t,\cdot))\,\dif t=\mu
\]
weakly with respect to the topology of $\mathcal{X}_{\mathrm{BT}}\times\mathcal{X}$,
as claimed.
\end{proof}
The next proposition shows the almost sure $L^{1}(\mathbb{R})$ convergence
of the solution to an initial value problem to a viscous shock arising
from a corresponding shift of $(u_{\mathrm{B}},u_{\mathrm{T}})$.
\begin{prop}
\label{prop:convergenceinL1}With the same notation and assumptions
as \propref{timeaveragedconvergence}, we have
\[
\lim_{t\to\infty}\|\tau_{b-b_{t}}u(t,\cdot)-\mathscr{S}_{b,\gamma}[\tau_{b-b_{t}}(u_{\mathrm{B}},u_{\mathrm{T}})(t,\cdot)]\|_{L^{1}(\mathbb{R})}=0
\]
almost surely.
\end{prop}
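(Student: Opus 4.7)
Write $u_{\mathrm{explicit}}(t,\cdot) = \mathscr{S}_{b_t,\gamma}[(u_{\mathrm{B}},u_{\mathrm{T}})(t,\cdot)]$. Because translation preserves the $L^{1}(\mathbb{R})$ norm, the quantity to be shown to vanish equals $D(t) \coloneqq \|u(t,\cdot) - u_{\mathrm{explicit}}(t,\cdot)\|_{L^{1}(\mathbb{R})}$. My plan is to show that $D(t) \to 0$ almost surely by combining the monotonicity of $D$ with the time-averaged weak convergence supplied by \propref{timeaveragedconvergence}.

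Introduce the sandwiching shocks $u_{X}(t,\cdot) = \mathscr{S}_{b_{t},\gamma_{X}}[(u_{\mathrm{B}},u_{\mathrm{T}})(t,\cdot)]$ for $X \in \{\mathrm{L},\mathrm{C},\mathrm{R}\}$, with $\gamma_{\mathrm{C}} \coloneqq \gamma$, so that $u_{\mathrm{C}} = u_{\mathrm{explicit}}$. The comparison principle and \eqref{betweentwoshocks} give $u_{\mathrm{L}} \le u,u_{\mathrm{C}} \le u_{\mathrm{R}}$ for all $t \ge 0$; in particular $|u - u_{\mathrm{C}}| \le u_{\mathrm{R}} - u_{\mathrm{L}}$ pointwise. \propref{L1explicit} applied at each fixed $t$ gives $\|(u_{\mathrm{R}} - u_{\mathrm{L}})(t,\cdot)\|_{L^{1}(\mathbb{R})} = \gamma_{\mathrm{R}} - \gamma_{\mathrm{L}}$, so $D(t) \le \gamma_{\mathrm{R}} - \gamma_{\mathrm{L}}$ deterministically, while the $L^{1}$ contraction property \cite[Proposition~3.2]{DGR19} applied to the two solutions $u$ and $u_{\mathrm{C}}$ makes $D(t)$ non-increasing. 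Hence $D(t) \downarrow L_{\infty}$ almost surely for some bounded non-negative random variable $L_{\infty}$, and the task reduces to showing $L_{\infty} = 0$ almost surely.

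Using $\tau_{b-b_{t}}u_{X}(t,\cdot) = \mathscr{S}_{b,\gamma_{X}}[\tau_{b-b_{t}}(u_{\mathrm{B}},u_{\mathrm{T}})(t,\cdot)]$ and the continuity of $\mathscr{S}$ (\propref{Scts}), the continuous mapping theorem upgrades \propref{timeaveragedconvergence} to the joint time-averaged weak convergence
\[
\frac{1}{T}\int_{1}^{T+1}\Law\bigl(\tau_{b-b_{t}}(u_{\mathrm{B}},u_{\mathrm{T}},u_{\mathrm{L}},u_{\mathrm{C}},u_{\mathrm{R}},u)(t,\cdot)\bigr)\,\dif t \longrightarrow \Law(w_{\mathrm{B}},w_{\mathrm{T}},w_{\mathrm{L}},w_{\mathrm{C}},w_{\mathrm{R}},w)
\]
in $\mathcal{X}^{6}$, where $w_{X} = \mathscr{S}_{b,\gamma_{X}}[w_{\mathrm{B}},w_{\mathrm{T}}]$ and $w = \mathscr{S}_{b,\gamma}[w_{\mathrm{B}},w_{\mathrm{T}}] = w_{\mathrm{C}}$ almost surely. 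Pick $T_{n}\uparrow\infty$ and let $X_{n}\sim\mathrm{Uniform}[1,T_{n}+1]$ be independent of the noise; then $\tau_{b-b_{X_{n}}}(u_{\mathrm{B}},u_{\mathrm{T}},u_{\mathrm{L}},u_{\mathrm{C}},u_{\mathrm{R}},u)(X_{n},\cdot)$ converges in law to the above limit, and Skorokhod's theorem produces an almost sure realization in $\mathcal{X}^{6}$ on an auxiliary probability space. On that space, $(u_{\mathrm{R}}-u_{\mathrm{L}})(X_{n},\cdot)$ and $w_{\mathrm{R}}-w_{\mathrm{L}}$ are non-negative and share the common $L^{1}$ mass $\gamma_{\mathrm{R}}-\gamma_{\mathrm{L}}$ by \propref{L1explicit}, so Scheffé's lemma promotes the $\mathcal{X}$-convergence of the dominant to convergence in $L^{1}(\mathbb{R})$. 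Since $|u-u_{\mathrm{C}}|(X_{n},\cdot) \le (u_{\mathrm{R}}-u_{\mathrm{L}})(X_{n},\cdot)$ and the left-hand side converges almost everywhere to $|w-w_{\mathrm{C}}|=0$, the dominated convergence theorem with variable $L^{1}$ dominants yields $D(X_{n}) \to 0$ almost surely on the auxiliary space, hence $D(X_{n}) \to 0$ in distribution on the original space.

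On the other hand, $X_{n}\to\infty$ in probability and $X_{n}$ is independent of the noise, while $D$ is non-increasing with a deterministic bound, so a short conditional argument gives $D(X_{n}) \to L_{\infty}$ in probability. Combining the two convergences forces $L_{\infty}\equiv 0$ almost surely, completing the proof. The essential obstacle is that the $\mathcal{X}$-topology does not see the $L^{1}(\mathbb{R})$ norm, so the time-averaged weak convergence in $\mathcal{X}$ cannot by itself rule out escape of $L^{1}$ mass to infinity; the sandwich $|u-u_{\mathrm{C}}| \le u_{\mathrm{R}}-u_{\mathrm{L}}$ together with Scheffé's lemma applied to the dominant (whose $L^{1}$ mass is preserved along the flow) is the device that bridges this gap.
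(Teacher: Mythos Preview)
Your proof is correct and takes a genuinely different route from the paper's. Both arguments reduce to showing that $D(S_T)\to 0$ in probability for $S_T\sim\mathrm{Uniform}[1,T+1]$ and then invoke the monotonicity of $D$ from the $L^1$ contraction. The difference lies in how the gap between $\mathcal{X}$-convergence and $L^1$-convergence is closed. The paper proves a splitting lemma (\lemref{splitupL1}) that bounds $\|v-\tilde v\|_{L^1}$ by a compact-interval piece controlled in $\mathcal{C}_{\p_\ell}$ plus exponential tails governed by $\overline{Z}_b[v_{\mathrm{B}},v_{\mathrm{T}}](b\pm L)$; the tails are then shown to be small in probability via the Birkhoff theorem applied to the limiting measure $\hat\nu_{a_{\mathrm{B}},a_{\mathrm{T}}}^{[b]}$. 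You instead observe that the dominant $u_{\mathrm{R}}-u_{\mathrm{L}}$ has \emph{exactly} conserved $L^1$ mass $\gamma_{\mathrm{R}}-\gamma_{\mathrm{L}}$ (by \propref{L1explicit}), so Scheffé's lemma promotes its $\mathcal{X}$-convergence to $L^1$-convergence, after which the generalized dominated convergence theorem with varying dominants handles $|u-u_{\mathrm{C}}|$ directly. Your argument is shorter and avoids both \lemref{splitupL1} and the explicit ergodic-theoretic tail control; the paper's approach, on the other hand, yields quantitative tail bounds that could be useful if one wanted rates.
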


To prove \propref{convergenceinL1}, we first prove the following
lemma.
\begin{lem}
\label{lem:splitupL1}Suppose that $(v_{\mathrm{B}},v_{\mathrm{T}},v),(v_{\mathrm{B}},v_{\mathrm{T}},\tilde{v})\in\mathcal{X}_{\mathrm{Sh}}$
and, for some $b\in\mathbb{R}$ and $\gamma_{\mathrm{L}}<\gamma_{\mathrm{R}}$
we have
\begin{align}
\mathscr{S}_{b,\gamma_{\mathrm{L}}}[v_{\mathrm{B}},v_{\mathrm{T}}](x) & \le v(x)\le\mathscr{S}_{b,\gamma_{\mathrm{R}}}[v_{\mathrm{B}},v_{\mathrm{T}}](x),\label{eq:vbetween}\\
\mathscr{S}_{b,\gamma_{\mathrm{L}}}[v_{\mathrm{B}},v_{\mathrm{T}}](x) & \le\tilde{v}(x)\le\mathscr{S}_{b,\gamma_{\mathrm{R}}}[v_{\mathrm{B}},v_{\mathrm{T}}](x)\label{eq:vtildebetween}
\end{align}
for all $x\in\mathbb{R}$. Then there is a constant $C<\infty$, depending
only on $\gamma_{\mathrm{L}}$ and $\gamma_{\mathrm{R}}$, so that
for all $L>0$ and all $\ell>1/2$ we have
\begin{equation}
\|v-\tilde{v}\|_{L^{1}(\mathbb{R})}\le2\langle|b|+L\rangle^{1+\ell}\|v-\tilde{v}\|_{\mathcal{C}_{\p_{\ell}}}+C\left(\e^{2\overline{Z}_{b}[v_{\mathrm{B}},v_{\mathrm{T}}](b-L)}+\e^{-2\overline{Z}_{b}[v_{\mathrm{B}},v_{\mathrm{T}}](b+L)}\right).\label{eq:vvtildeL1}
\end{equation}
\end{lem}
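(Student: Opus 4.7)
The plan is to split the $L^1$ norm as
\[
\|v-\tilde v\|_{L^1(\mathbb{R})} = \int_{-\infty}^{b-L}|v-\tilde v| + \int_{b-L}^{b+L}|v-\tilde v| + \int_{b+L}^{\infty}|v-\tilde v|,
\]
and estimate the middle piece using the $\mathcal{C}_{\p_\ell}$ seminorm while controlling the two tails by comparison with $\mathscr{S}_{b,\gamma_{\mathrm{R}}}-\mathscr{S}_{b,\gamma_{\mathrm{L}}}$.

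For the middle piece I will just bound $\p_\ell(x)\le \langle |b|+L\rangle^\ell$ for $x\in[b-L,b+L]$, so that
\[
\int_{b-L}^{b+L}|v-\tilde v| \le \|v-\tilde v\|_{\mathcal{C}_{\p_\ell}}\cdot 2L\langle |b|+L\rangle^\ell \le 2\langle |b|+L\rangle^{1+\ell}\|v-\tilde v\|_{\mathcal{C}_{\p_\ell}},
\]
which is the first term on the right of \eqref{vvtildeL1}.

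For the tails, the sandwiching hypotheses \eqref{vbetween}--\eqref{vtildebetween} give $|v-\tilde v|\le \mathscr{S}_{b,\gamma_{\mathrm{R}}}[v_{\mathrm{B}},v_{\mathrm{T}}]-\mathscr{S}_{b,\gamma_{\mathrm{L}}}[v_{\mathrm{B}},v_{\mathrm{T}}]$ pointwise. Using the identity $\mathscr{S}_{b,\gamma}[v_{\mathrm{B}},v_{\mathrm{T}}]=v_{\mathrm{T}}+(v_{\mathrm{B}}-v_{\mathrm{T}})\cdot(1+\e^{\gamma-2\overline Z_b[v_{\mathrm{B}},v_{\mathrm{T}}]})^{-1}$, this difference equals $(v_{\mathrm{T}}-v_{\mathrm{B}})\bigl[(1+\e^{\gamma_{\mathrm{L}}-2\overline Z_b})^{-1}-(1+\e^{\gamma_{\mathrm{R}}-2\overline Z_b})^{-1}\bigr]$. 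I will then apply the change of variables $\zeta=\overline Z_b[v_{\mathrm{B}},v_{\mathrm{T}}](x)$, $\dif\zeta=\tfrac12(v_{\mathrm{T}}-v_{\mathrm{B}})(x)\,\dif x$, exactly as in the proof of \lemref{shocksinXshb}, which reduces the tail integrals to
\[
\int_{-\infty}^{\overline Z_b[v_{\mathrm{B}},v_{\mathrm{T}}](b-L)}2\Bigl[\tfrac{1}{1+\e^{\gamma_{\mathrm{L}}-2\zeta}}-\tfrac{1}{1+\e^{\gamma_{\mathrm{R}}-2\zeta}}\Bigr]\dif\zeta\quad\text{and}\quad \int_{\overline Z_b[v_{\mathrm{B}},v_{\mathrm{T}}](b+L)}^{\infty}2\Bigl[\tfrac{1}{1+\e^{2\zeta-\gamma_{\mathrm{R}}}}-\tfrac{1}{1+\e^{2\zeta-\gamma_{\mathrm{L}}}}\Bigr]\dif\zeta.
\]
Using the crude bounds $(1+\e^{\gamma_{\mathrm{L}}-2\zeta})^{-1}\le \e^{2\zeta-\gamma_{\mathrm{L}}}$ on the left tail and $(1+\e^{2\zeta-\gamma_{\mathrm{R}}})^{-1}\le\e^{\gamma_{\mathrm{R}}-2\zeta}$ on the right tail and integrating yields contributions of size $\e^{-\gamma_{\mathrm{L}}}\e^{2\overline Z_b[v_{\mathrm{B}},v_{\mathrm{T}}](b-L)}$ and $\e^{\gamma_{\mathrm{R}}}\e^{-2\overline Z_b[v_{\mathrm{B}},v_{\mathrm{T}}](b+L)}$, respectively. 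Taking $C=\max(\e^{-\gamma_{\mathrm{L}}},\e^{\gamma_{\mathrm{R}}})$ (which depends only on $\gamma_{\mathrm{L}},\gamma_{\mathrm{R}}$) and summing the three pieces gives \eqref{vvtildeL1}. There is no genuine obstacle here; the only minor point to keep track of is that the exponent in the first term is $1+\ell$ rather than $\ell$, which comes from absorbing the prefactor $2L$ into $\langle|b|+L\rangle$.
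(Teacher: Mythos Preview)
Your proof is correct and follows essentially the same approach as the paper: the same three-piece split, the same $\mathcal{C}_{\p_\ell}$ bound on the central interval (with $2L\le 2\langle|b|+L\rangle$ to absorb the length factor), and the same change of variables $\zeta=\overline Z_b[v_{\mathrm{B}},v_{\mathrm{T}}](x)$ on the tails after sandwiching by $\mathscr{S}_{b,\gamma_{\mathrm{R}}}-\mathscr{S}_{b,\gamma_{\mathrm{L}}}$. The only cosmetic difference is that the paper writes the transformed integrand as a difference of $\tanh$'s rather than of logistic fractions, but the resulting exponential tail bounds are identical.
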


\begin{proof}
For each $L>0$, we have
\begin{equation}
\|v-\tilde{v}\|_{L^{1}(\mathbb{R})}=\|v-\tilde{v}\|_{L^{1}([b-L,b+L])}+\|v-\tilde{v}\|_{L^{1}(\mathbb{R}\setminus[b-L,b+L])},\label{eq:splitupL1}
\end{equation}
and
\begin{equation}
\|v-\tilde{v}\|_{L^{1}([b-L,b+L])}\le2\langle|b|+L\rangle^{1+\ell}\|v-\tilde{v}\|_{\mathcal{C}_{\p_{\ell}}}.\label{eq:centerpart}
\end{equation}
Using \eqref{vbetween}--\eqref{vtildebetween} and arguing as in
\propref{L1explicit}, we have 
\begin{align}
\int_{-\infty}^{b-L}|[v-\tilde{v}](x)|\,\dif x & \le\int_{-\infty}^{b-L}[\mathscr{S}_{b,\gamma_{\mathrm{R}}}[v_{\mathrm{B}},v_{\mathrm{T}}](x)-\mathscr{S}_{b,\gamma_{\mathrm{L}}}[v_{\mathrm{B}},v_{\mathrm{T}}](x)|\,\dif x\nonumber \\
 & =\int_{-\infty}^{\overline{Z}_{b}[v_{\mathrm{B}},v_{\mathrm{T}}](b-L)}[-\tanh(\zeta-\gamma_{\mathrm{R}})+\tanh(\zeta-\gamma_{\mathrm{R}})]\,\dif\zeta\nonumber \\
 & \le C\e^{2\overline{Z}_{b}[v_{\mathrm{B}},v_{\mathrm{T}}](b-L)},\label{eq:chgvar-left}
\end{align}
with a constant $C$ depending only on $\gamma_{\mathrm{L}}$ and
$\gamma_{\mathrm{R}}$. Similarly, 
\begin{equation}
\int_{b+L}^{\infty}|[v-\tilde{v}](x)|\,\dif x\le C\e^{-2\overline{Z}_{b}[v_{\mathrm{B}},v_{\mathrm{T}}](b+L)}.\label{eq:chvar-right}
\end{equation}
Using \eqref{centerpart}--\eqref{chvar-right} in \eqref{splitupL1}
yields \eqref{vvtildeL1}.
\end{proof}
Now we can prove \propref{convergenceinL1}.
\begin{proof}[Proof of \propref{convergenceinL1}.]
We set $Z_{b,t}=Z_{b,t}[(u_{\mathrm{B}},u_{\mathrm{T}})(0,\cdot)]$
and consider
\[
\mathbf{w}=(w_{\mathrm{B}},w_{\mathrm{T}})\sim\hat{\nu}_{a_{\mathrm{B}},a_{\mathrm{T}}}^{[b]}.
\]
By the Birkhoff ergodic theorem, \cite[Theorem 1.2, property (P5)]{DGR19},
and the fact that $\hat{\nu}_{a_{\mathrm{B}},a_{\mathrm{T}}}^{[b]}$
is absolutely continuous with respect to $\nu_{a_{\mathrm{B}},a_{\mathrm{T}}}$,
we have
\[
\lim_{L\to\pm\infty}\frac{1}{L}\int_{b}^{b+L}[w_{\mathrm{T}}-w_{\mathrm{B}}](x)\,\dif x=\pm(a_{\mathrm{T}}-a_{\mathrm{B}})
\]
almost surely, and in particular in probability. Hence, given $\eps>0$,
there is an $L_{\eps}<\infty$ so that~$L\ge L_{\eps}$ then
\begin{equation}
\mathbb{P}\left(\overline{Z}_{b}[\mathbf{w}](b-L)\ge-\frac{1}{2}(a_{\mathrm{T}}-a_{\mathrm{B}})L\text{ or }\overline{Z}_{b}[\mathbf{w}](b+L)\le\frac{1}{2}(a_{\mathrm{T}}-a_{\mathrm{B}})L\right)<\frac{\eps}{4}.\label{eq:Z0wL}
\end{equation}
In addition, we can choose $L_{\eps}$ so large that for all $L\ge L_{\eps}$
we have
\begin{equation}
2C\e^{-(a_{\mathrm{T}}-a_{\mathrm{B}})L}<\eps/2,\label{eq:2CeL}
\end{equation}
with $C$ as in \lemref{splitupL1}. By \propref{convergenceinL1},
we can find a $T_{\eps}<\infty$ so large that if $T\ge T_{\eps}$
and~$S_{T}\sim\Uniform([1,T+1])$ is independent of everything else,
then (using in addition \eqref{Z0wL})
\begin{align}
\mathbb{P} & \left(\begin{aligned} & \overline{Z}_{b}[\tau_{b-b_{S_{T}}}(u_{\mathrm{B}},u_{\mathrm{T}})(S_{T},\cdot)](b-L)\ge-\frac{L}{2}(a_{\mathrm{T}}-a_{\mathrm{B}})\\
 & \qquad\text{or }\overline{Z}_{b}[\tau_{b-b_{S_{T}}}(u_{\mathrm{B}},u_{\mathrm{T}})(S_{\mathrm{T}},\cdot)](b+L)\le\frac{L}{2}(a_{\mathrm{T}}-a_{\mathrm{B}})
\end{aligned}
\right)<\frac{\eps}{2}.\label{eq:Zbnottooweird}
\end{align}
and (using in addition \propref{Scts})
\begin{equation}
\mathbb{P}\left(\left\Vert \tau_{b-b_{S_{T}}}u(S_{T},\cdot)-\mathscr{S}_{b,\gamma}[\tau_{b-b_{S_{T}}}(u_{\mathrm{B}},u_{\mathrm{T}})(S_{T},\cdot)]\right\Vert _{\mathcal{C}_{\p_{\ell}}}\ge\frac{\eps}{4L^{1+\ell}}\right)<\frac{\eps}{2}.\label{eq:CpLnormnottoobig}
\end{equation}
Then we can compute, using \eqref{vvtildeL1},
\begin{align}
 & \left\Vert \tau_{b-b_{S_{T}}}u(S_{T},\cdot)-\mathscr{S}_{b,\gamma}[\tau_{b-b_{S_{T}}}(u_{\mathrm{B}},u_{\mathrm{T}})(S_{T},\cdot)]\right\Vert _{L^{1}(\mathbb{R})}\nonumber \\
 & \qquad\le2\langle|b|+L\rangle^{1+\ell}\left\Vert \tau_{b-b_{S_{T}}}u(S_{T},\cdot)-\mathscr{S}_{b,\gamma}[\tau_{b-b_{S_{T}}}(u_{\mathrm{B}},u_{\mathrm{T}})(S_{T},\cdot)]\right\Vert _{\mathcal{C}_{\p_{\ell}}}\nonumber \\
 & \qquad\qquad+C\exp\left\{ 2\overline{Z}_{b}[\tau_{b-b_{S_{T}}}(u_{\mathrm{B}},u_{\mathrm{T}})(S_{T},\cdot)](b-L)\right\} \nonumber \\
 & \qquad\qquad+C\exp\left\{ -2\overline{Z}_{b}[\tau_{b-b_{S_{T}}}(u_{\mathrm{B}},u_{\mathrm{T}})(S_{T},\cdot)](b+L)\right\} .\label{eq:splituptheL1}
\end{align}
Using \eqref{2CeL}--\eqref{CpLnormnottoobig} in \eqref{splituptheL1},
we get
\[
\mathbb{P}\left(\left\Vert \tau_{b-b_{S_{T}}}u(S_{T},\cdot)-\mathscr{S}_{b,\gamma}[\tau_{b-b_{S_{T}}}(u_{\mathrm{B}},u_{\mathrm{T}})(S_{T},\cdot)]\right\Vert _{L^{1}(\mathbb{R})}\ge\eps\right)<\eps,
\]
so 
\begin{equation}
\|\tau_{b-b_{S_{T}}}u(S_{T},\cdot)-\mathscr{S}_{b,\gamma}[\tau_{b-b_{S_{T}}}(u_{\mathrm{B}},u_{\mathrm{T}})(S_{T},\cdot)]\|_{L^{1}(\mathbb{R})}\to0\text{ in probability as }T\to\infty.\label{eq:convinprob}
\end{equation}
On the other hand, by the $L^{1}$ contractivity property (proved
as \cite[Proposition 3.2]{DGR19}), with probability~$1$ the norm
\[
\|\tau_{b-b_{t}}u(t,\cdot)-\mathscr{S}_{b,\gamma}[\tau_{b-b_{t}}(u_{\mathrm{B}},u_{\mathrm{T}})(t,\cdot)]\|_{L^{1}(\mathbb{R})}
\]
is decreasing in $t$. Together with \eqref{convinprob} this means
in fact this norm goes to zero almost surely.
\end{proof}
Now we can remove the random time $S_{T}$ in the statement of \propref{timeaveragedconvergence},
proving \thmref{shock-stability}.
\begin{proof}[Proof of \thmref{shock-stability}.]
First we note that by \propref{convergenceinL1} for each $i\in\{1,\ldots,N\}$
we have (setting $b=b^{(i)}$, $b_{t}=b_{t}^{(i)}$, and $\gamma=0$)
\begin{align*}
0 & =\lim_{t\to\infty}\|\tau_{b^{(i)}-b_{t}^{(i)}}u(t,\cdot)-\mathscr{S}_{b^{(i)},0}[\tau_{b^{(i)}-b_{t}^{(i)}}(u_{\mathrm{B}},u_{T})(t,\cdot)\|_{L^{1}(\mathbb{R})}\\
 & =\lim_{t\to\infty}\|u(t,\cdot)-\tau_{-b^{(i)}+b_{t}^{(i)}}\mathscr{S}_{b^{(i)},0}[\tau_{b^{(i)}-b_{t}^{(i)}}(u_{\mathrm{B}},u_{T})(t,\cdot)\|_{L^{1}(\mathbb{R})}\\
 & =\lim_{t\to\infty}\|u(t,\cdot)-\mathscr{S}_{b_{t}^{(i)},0}[(u_{\mathrm{B}},u_{T})(t,\cdot)\|_{L^{1}(\mathbb{R})},
\end{align*}
which is \eqref{trueL1conv}.

Let $\tilde{\mathbf{u}}=(u_{1},\ldots,u_{N})$, $\mathbf{u}=(u_{\mathrm{B}},u_{\mathrm{T}},\tilde{\mathbf{u}})$,
with notation as in the statement of the theorem. The assumption \eqref{setbi}
means that $(a_{\mathrm{B}},a_{\mathrm{T}},u_{i}(0,\cdot))\in\mathcal{X}_{\mathrm{Sh},b^{(i)},0}$.
We set $Z_{t}=Z_{b^{(1)},t}[a_{\mathrm{B}},a_{\mathrm{T}}]$ and $b=b^{(1)}$,
$b_{t}=b_{t}^{(1)}$. By the same argument as in the proof of tightness
in \propref{timeaveragedconvergence}, we see that $\{\tau_{b-b_{t}}\mathbf{u}(t,\cdot)\}_{t\ge1}$
is tight in the topology of $\mathcal{X}_{\mathrm{BT}}\times\mathcal{X}^{N}$.
Suppose that we have a sequence $t_{k}\uparrow\infty$ and a limiting
random variable $\mathbf{w}=(w_{\mathrm{B}},w_{\mathrm{T}},w_{1},\ldots,w_{N})\in\mathcal{X}_{\mathrm{BT}}\times\mathcal{X}^{N}$
so that
\[
\tau_{b-b_{t_{k}}}\mathbf{u}(t_{k},\cdot)\to\mathbf{w}
\]
in law in the topology of $\mathcal{X}_{\mathrm{BT}}\times\mathcal{X}^{N}$.
By \propref{convergetotildenu}, we have
\begin{equation}
\Law(w_{\mathrm{B}},w_{\mathrm{T}})=\hat{\nu}_{a_{\mathrm{B}},a_{\mathrm{T}}}^{[b]}.\label{eq:convergence}
\end{equation}
Therefore, using \propref{Scts}, we have
\[
\tau_{b-b_{t_{k}}}u_{i}(t_{k},\cdot)-\mathscr{S}_{b^{(i)},0}[\tau_{b-b_{t_{k}}}(u_{\mathrm{B}},u_{\mathrm{T}})(t,\cdot)]\xrightarrow[k\to\infty]{\mathrm{law}}w_{i}-\mathscr{S}_{b^{(1)},b^{(i)}-b^{(1)}}[w_{\mathrm{B}},w_{\mathrm{T}}]
\]
with respect to the topology of $\mathcal{X}$. On the other hand,
\propref{convergenceinL1} implies that, with probability $1$, for
each $1\le i\le N$
\[
\lim_{t\to\infty}\|\tau_{b-b_{t}}u_{i}(t,\cdot)-\mathscr{S}_{b^{(i)},0}[\tau_{b-b_{t}}(u_{\mathrm{B}},u_{\mathrm{T}})(t,\cdot)]\|_{L^{1}(\mathbb{R})}=0.
\]
Combined, the last two displays show that $w_{i}=\mathscr{S}_{b^{(1)},b^{(i)}-b^{(1)}}[w_{\mathrm{B}},w_{\mathrm{T}}]$
almost surely. Since the topology of weak convergence of probability
measures with respect to the topology of $\mathcal{X}_{\mathrm{BT}}\times\mathcal{X}^{N}$
is metrizable, this, \eqref{convergence}, and \propref{Scts} imply
\eqref{convinlaw}.
\end{proof}

\appendix

\section{A technical lemma\label{appendix:techlemma}}
\begin{lem}
\label{lem:inversionthing}Let $\mathcal{Y}$ be a metric space and
let $(q\mapsto F_{q}):\mathcal{Y}\to\mathcal{C}_{\mathrm{loc}}^{1}(\mathbb{R})$
be continuous and such that $\partial_{x}[F_{q}(x)]>0$ for all $q\in\mathcal{Y}$
and all $x\in\mathbb{R}$. Let $G:\mathcal{Y}\to\mathbb{R}$ be continuous.
Then the map $\mathcal{Y}\ni q\mapsto F_{q}^{-1}(G(q))\in\mathbb{R}$
is continuous.
\end{lem}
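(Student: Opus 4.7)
The plan is to reduce the claim to a standard bracketing argument. Fix $q_0\in\mathcal{Y}$ and set $y_0=F_{q_0}^{-1}(G(q_0))$. Given any $\eps>0$, I would first use the strict monotonicity of $F_{q_0}$ to produce a genuine gap around $G(q_0)$ at the values $y_0\pm\eps$: define
\[
\delta=\tfrac{1}{2}\min\bigl(G(q_0)-F_{q_0}(y_0-\eps),\; F_{q_0}(y_0+\eps)-G(q_0)\bigr),
\]
which is strictly positive because $\partial_{x}F_{q_0}>0$ on $\mathbb{R}$.

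The next step is to exploit the two continuity hypotheses to propagate this gap to nearby $q$. The continuity of $q\mapsto F_{q}$ into $\mathcal{C}^{1}_{\mathrm{loc}}(\mathbb{R})$ (only the $\mathcal{C}^{0}_{\mathrm{loc}}$ part is actually used here) yields a neighborhood $U_{1}$ of $q_{0}$ on which $\sup_{x\in[y_{0}-\eps,y_{0}+\eps]}|F_{q}(x)-F_{q_{0}}(x)|<\delta$, and the continuity of $G$ yields a neighborhood $U_{2}$ of $q_{0}$ on which $|G(q)-G(q_{0})|<\delta$.

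Then for $q\in U_{1}\cap U_{2}$ I would simply read off
\[
F_{q}(y_{0}-\eps)<F_{q_{0}}(y_{0}-\eps)+\delta\le G(q_{0})-\delta<G(q),\qquad F_{q}(y_{0}+\eps)>G(q),
\]
and invoke strict monotonicity of $F_{q}$ (which gives invertibility) to conclude that $F_{q}^{-1}(G(q))\in(y_{0}-\eps,y_{0}+\eps)$. This is exactly the continuity of $q\mapsto F_{q}^{-1}(G(q))$ at $q_{0}$.

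There is no real obstacle; the only thing to be careful about is that the hypothesis $\partial_{x}F_{q}>0$ is pointwise rather than uniform, so one cannot quantify $\delta$ in terms of a lower bound on the derivative. The argument above sidesteps this by using only the pointwise strict monotonicity of $F_{q_{0}}$ at the two values $y_{0}\pm\eps$, which is all that is needed to obtain the gap $\delta$.
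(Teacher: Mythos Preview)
Your proof is correct and follows the same bracketing strategy as the paper's: show that for $q$ near $q_0$ one has $F_q(y_0-\eps)<G(q)<F_q(y_0+\eps)$, and conclude by strict monotonicity. Your version is in fact slightly cleaner than the paper's, which introduces a local lower bound $\kappa$ on $\partial_x F_{q_0}$ and passes through the intermediate point $F_{q_0}^{-1}(G(\tilde q))$ via a triangle inequality, whereas you obtain the gap $\delta$ directly from the values $F_{q_0}(y_0\pm\eps)$ and avoid both detours.
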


\begin{proof}
Let $q\in\mathcal{Y}$ and let $\eps>0$. There is a $\kappa>0$ so
that
\begin{equation}
\inf_{x\ :\ |x-F_{q}^{-1}(G(q))|<2\eps}F_{q}'(x)\ge\kappa.\label{eq:derivnotzero}
\end{equation}
Since $F_{q}^{-1}\circ G:\mathcal{Y}\to\mathbb{R}$ is continuous,
there is a $\delta>0$ so that if $d_{\mathcal{Y}}(q,\tilde{q})<\delta$,
then
\begin{equation}
\left|F_{q}^{-1}(G(q))-F_{q}^{-1}(G(\tilde{q}))\right|<\eps\label{eq:FinvGcts}
\end{equation}
and
\begin{equation}
\sup_{x\ :\ |x-F_{q}^{-1}(G(q))|<2\eps}|F_{\tilde{q}}(x)-F_{q}(x)|<\kappa\eps/2.\label{eq:closetog}
\end{equation}

Now if $d_{\mathcal{Y}}(q,\tilde{q})<\delta$ then $|F_{q}^{-1}(G(\tilde{q}))+\eps-F_{q}^{-1}(G(q))|<2\eps$,
so
\begin{align*}
F_{\tilde{q}} & (F_{q}^{-1}(G(\tilde{q}))+\eps)-G(\tilde{q})\\
 & =F_{\tilde{q}}(F_{q}^{-1}(G(\tilde{q}))+\eps)-F_{q}(F_{q}^{-1}(G(\tilde{q}))+\eps)+F_{q}(F_{q}^{-1}(G(\tilde{q}))+\eps)-F_{q}(F_{q}^{-1}(G(\tilde{q})))\\
 & >-\kappa\eps/2+\kappa\eps=\kappa\eps/2
\end{align*}
by \eqref{derivnotzero} and \eqref{closetog}. This means that
\[
F_{q}^{-1}(G(\tilde{q}))+\eps>F_{\tilde{q}}^{-1}(G(\tilde{q})+\kappa\eps/2)\ge F_{\tilde{q}}^{-1}(G(\tilde{q})).
\]
Similarly, we have
\[
F_{q}^{-1}(G(\tilde{q}))-\eps<F_{\tilde{q}}^{-1}(G(\tilde{q})),
\]
so in fact we have
\begin{equation}
|F_{q}^{-1}(G(\tilde{q}))-F_{\tilde{q}}^{-1}(G(\tilde{q}))|<\eps.\label{eq:changesubscript}
\end{equation}
Combining \eqref{FinvGcts} and \eqref{changesubscript}, we obtain
\[
|F_{q}^{-1}(G(q))-F_{\tilde{q}}^{-1}(G(\tilde{q}))|\le|F_{q}^{-1}(G(q))-F_{q}^{-1}(G(\tilde{q}))|+|F_{q}^{-1}(G(\tilde{q}))-F_{\tilde{q}}^{-1}(G(\tilde{q}))|<2\eps.
\]
This completes the proof.
\end{proof}
\bibliographystyle{habbrv}

\end{document}